\theoremstyle{plain}
\newtheorem{cor}{Corollary}
\newtheorem{prop}[cor]{Proposition}
\newtheorem{con}[cor]{Control}
\newtheorem{thm}[cor]{Theorem}
\theoremstyle{definition}
\numberwithin{cor}{section}
\numberwithin{equation}{section}
\DeclareMathOperator{\tr}{tr}
\DeclareMathOperator{\C}{C}
\DeclareMathOperator{\USC}{USC}
\DeclareMathOperator{\LSC}{LSC}
\DeclareMathOperator{\BUC}{BUC}
\DeclareMathOperator{\Lip}{Lip}
\DeclareMathOperator{\Supp}{Supp}
\renewcommand{\d}{d} % the ambient dimension
\newcommand{\abs}[1]{\lvert#1\rvert}
\newcommand{\norm}[1]{\lVert#1\rVert}
\def\XXint#1#2#3{{\setbox0=\hbox{$#1{#2#3}{\int}$ }
\vcenter{\hbox{$#2#3$ }}\kern-.6\wd0}}
\title{A Liouville Property for Isotropic Diffusions in Random Environment}
\author{Benjamin J. Fehrman}
\date{June 5, 2014}
\subjclass[2010]{35B27, 35B53, 60J60}
\keywords{Liouville property, diffusion in random environment}
\address{Department of Mathematics, The University of Chicago, 5734 S. University Avenue, Chicago IL, 60637.}
\email{bfehrman@math.uchicago.edu}
\begin{document}

\begin{abstract}
We obtain a Liouville property for stationary diffusions in random environment which are small, isotropic perturbations of Brownian motion in spacial dimension greater than two.  Precisely, we prove that, on a subset of full probability, the constant functions are the only strictly sub-linear maps which are invariant with respect to the evolution of the diffusion.  And, we prove that the constant functions are the only bounded, ancient maps which are invariant under the evolution.  These results depend upon the previous work of Fehrman \cite{F1} and Sznitman and Zeitouni \cite{SZ} and, in the first case, our methods are motivated by the work, in the discrete setting, of Benjamini, Duminil-Copin, Kozma and Yadin \cite{BDKY}.
\end{abstract}

\maketitle

\section{Introduction}

In this paper, we establish a Liouville property for stationary diffusions in random environment which are small, isotropic perturbations of Brownian motion in dimensions greater than two.  Precisely, there exists a probability space $(\Omega,\mathcal{F},\mathbb{P})$ indexing the collection of environments described, for each $x\in\mathbb{R}^d$ and $\omega\in\Omega$, by coefficients $$A(x,\omega)\in\mathcal{S}(d)\;\;\textrm{and}\;\;b(x,\omega)\in \mathbb{R}^d,$$ where we assume, in particular, that the processes $A(x,\omega)$ and $b(x,\omega)$ are stationary and satisfy a finite range dependence and restricted isotropy condition.  That is, whenever subsets $A,B\subset\mathbb{R}^d$ are sufficiently distant, the sigma algebras $$\sigma\left( A(x,\omega), b(x,\omega)\;|\;x\in A\right)\;\;\textrm{and}\;\;\sigma\left(A(x,\omega), b(x,\omega)\;|\;x\in B\right)\;\;\textrm{are independent.}$$  And, whenever $r:\mathbb{R}^d\rightarrow\mathbb{R}^d$ is an orthogonal transformation preserving the coordinate axis, for each $x\in\mathbb{R}^d$, the random variables $$\left(A(rx,\omega),b(rx,\omega)\right)\;\;\textrm{and}\;\;\left(rA(x,\omega)r^t,rb(x,\omega)\right)\;\;\textrm{have the same law.}$$  We furthermore assume that the process is a small perturbation of Brownian motion.  For $\eta>0$ to be chosen sufficiently small, $$\abs{A(x,\omega)-I}\leq \eta\;\;\textrm{and}\;\;\abs{b(x,\omega)}\leq \eta\;\;\textrm{on}\;\;\mathbb{R}^d\times\Omega.$$

The precise statement of these and additional assumptions may be found in Section 2.  Observe there that the assumptions are identical to those first considered by Sznitman and Zeitouni \cite{SZ}, which correspond to the continuous analogue of those considered in the discrete setting by Bricmont and Kupiainen \cite{BK}.

Before stating our main result, we remark that there exists, see Friedman \cite{Fr}, for each $\omega\in\Omega$, a Green's function $p_{t,\omega}(x,y):\mathbb{R}^d\times\mathbb{R}^d\times(0,\infty)\rightarrow\mathbb{R},$ such that, for continuous initial data growing, for instance, at most quadratically, the solution $w:\mathbb{R}^d\times[0,\infty)\rightarrow\mathbb{R}$ satisfying \begin{equation}\label{i_par}\left\{\begin{array}{ll}w_t-\frac{1}{2}\tr(A(x,\omega)D^2w)+b(x,\omega)\cdot Dw=0 & \textrm{on}\;\;\mathbb{R}^d\times(0,\infty), \\ w=f & \textrm{on}\;\;\mathbb{R}^d\times\left\{0\right\},\end{array}\right.\end{equation} admits the representation \begin{equation}\label{i_par_rep}w(x,t)=\int_{\mathbb{R}^d}p_{t,\omega}(x,y)f(y)\;dy\;\;\textrm{on}\;\;\mathbb{R}^d\times[0,\infty).\end{equation}  Similarly, see Stroock and Varadhan \cite{SV}, for each $\omega\in\Omega$ and $x\in\mathbb{R}^d$, the martingale problem corresponding to the generator $$\frac{1}{2}\sum_{i,j=1}^da_{ij}(y,\omega)\frac{\partial^2}{\partial y_i\partial y_j}-\sum_{i=1}^db(y,\omega)\frac{\partial}{\partial y_i}$$ is well-posed.  We denote by $P_{x,\omega}$ the corresponding probability measures on the space of continuous paths $\C([0,\infty);\mathbb{R}^d)$ and observe that the solution of (\ref{i_par}) admits the representation \begin{equation}\label{i_mart} w(x,t)=P_{x,\omega}\left(f(X_t)\right)\;\;\textrm{on}\;\;\mathbb{R}^d\times[0,\infty).\end{equation}  We may now summarize the result.

The purpose of this paper is to prove that, on a subset of full probability, the constant functions are the only strictly sub-linear solutions $w:\mathbb{R}^d\rightarrow\mathbb{R}$ to the time-independent problem \begin{equation}\label{i_eq} -\frac{1}{2}\tr(A(x,\omega)D^2 w)+b(x,\omega)\cdot Dw=0\;\;\textrm{on}\;\;\mathbb{R}^d,\end{equation} where, in view of (\ref{i_par_rep}) and (\ref{i_mart}), we will say that a strictly sub-linear $w:\mathbb{R}^d\rightarrow\mathbb{R}$ satisfies (\ref{i_eq}) if, for each $x\in\mathbb{R}^d$ and $t\geq 0$, \begin{equation}\label{i_sol}w(x)=\int_{\mathbb{R}^d}p_{t,\omega}(x,y)w(y)\;dy=P_{x,\omega}\left(w(X_t)\right).\end{equation}  We recall that a function $w:\mathbb{R}^d\rightarrow\mathbb{R}$ is strictly sub-linear if $$\lim_{\abs{y}\rightarrow\infty}\frac{w(y)}{\abs{y}}=0,$$ and now state our main theorem.

\begin{thm}\label{i_main}  Assume (\ref{steady}) and (\ref{constants}).  There exists a subset of full probability on which the constant functions are the only strictly sub-linear $w:\mathbb{R}^d\rightarrow\mathbb{R}$ satisfying (\ref{i_sol}).\end{thm}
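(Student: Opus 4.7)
The strategy, motivated by \cite{BDKY}, is to use the invariance identity
\begin{equation*}
w(x) - w(y) = \int_{\mathbb{R}^d} \bigl(p_{t,\omega}(x,z) - p_{t,\omega}(y,z)\bigr)\, w(z)\, dz,
\end{equation*}
which is an immediate consequence of (\ref{i_sol}), and to exploit the diffusive $|x-y|/\sqrt{t}$ scaling of the heat kernel difference together with the strict sub-linearity of $w$ by sending $t \to \infty$. Observe that (\ref{i_sol}) already expresses $w$ as an integral against the smooth Green's function $p_{t,\omega}$, so $w$ is automatically smooth and the pointwise bounds used below are justified.

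The principal analytic input, to be supplied on a subset $\Omega_0 \subset \Omega$ of full probability by the quantitative homogenization results of \cite{F1} and \cite{SZ}, is a pointwise Lipschitz estimate in the base point for $p_{t,\omega}$ of the form
\begin{equation*}
\bigl|p_{t,\omega}(x,z) - p_{t,\omega}(y,z)\bigr| \leq \frac{C\,|x-y|}{\sqrt{t}}\,G_t(x,z), \qquad t \geq T_0(x,y,\omega),
\end{equation*}
where $G_t(x,z) = C\, t^{-d/2}\exp(-|x-z|^2/Ct)$ is a Gaussian density. At the homogenized scale, $p_{t,\omega}$ is to leading order the standard Gaussian heat kernel, for which this estimate is classical; the cited works supply precisely the quantitative control needed to transfer the bound to $p_{t,\omega}$ itself.

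With this estimate in hand, strict sub-linearity of $w$ yields, for every $\epsilon > 0$, a radius $R_\epsilon$ and a constant $M_\epsilon$ such that $|w(z)| \leq \epsilon|z| + M_\epsilon$ for every $z \in \mathbb{R}^d$. Combining with the first-moment bound $\int_{\mathbb{R}^d} G_t(x,z)|z|\,dz \leq C(|x| + \sqrt{t})$ one obtains
\begin{equation*}
|w(x) - w(y)| \leq \frac{C\,|x-y|}{\sqrt{t}}\int_{\mathbb{R}^d} G_t(x,z)\bigl(\epsilon|z| + M_\epsilon\bigr)\,dz \leq C\,|x-y|\left(\epsilon + \frac{\epsilon|x| + M_\epsilon}{\sqrt{t}}\right).
\end{equation*}
Letting $t \to \infty$ gives $|w(x) - w(y)| \leq C\epsilon|x-y|$, and since $\epsilon > 0$ is arbitrary we conclude $w(x) = w(y)$ for every pair $x, y \in \mathbb{R}^d$.

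The central obstacle is thus the heat kernel Lipschitz estimate on a full-probability set with the correct diffusive scaling. The results of \cite{F1} and \cite{SZ} furnish the key homogenization and invariance principle, but assembling them into the uniform pointwise bound above -- on a full-probability event independent of the particular pair $(x,y)$ -- is the technical heart of the argument. The $(x,y)$-dependence may be removed by first working along the countable set $\mathbb{Q}^d \times \mathbb{Q}^d$ and then extending by continuity of $w$.
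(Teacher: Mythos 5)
There is a genuine gap: your entire argument rests on the pointwise Lipschitz bound
\begin{equation*}
\bigl|p_{t,\omega}(x,z)-p_{t,\omega}(y,z)\bigr|\leq \frac{C\abs{x-y}}{\sqrt{t}}\,G_t(x,z),
\end{equation*}
which you defer to \cite{F1} and \cite{SZ}, but no such estimate is available from those works, and the paper never proves one. The results of \cite{SZ} (Controls \ref{Holder} and \ref{localization}) compare \emph{solutions with H\"older data} to homogenized solutions in a rescaled H\"older norm at scale $L_n$, together with exit-time tail bounds; they do not yield two-sided Gaussian bounds on the kernel itself, let alone a gradient (in the base point) estimate with the diffusive factor $\abs{x-y}/\sqrt{t}$. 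Such a bound is essentially a quantitative local CLT combined with a large-scale $C^{0,1}$ regularity statement for the kernel, which for non-divergence form equations in this setting is a substantially harder statement than the theorem being proved. You correctly identify this estimate as ``the technical heart of the argument,'' but that is precisely the part that is missing, so the proposal does not constitute a proof.

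The paper's route is designed to avoid any pointwise kernel regularity. Starting from $w(0)-w(y)=\int(p_{n,\omega}(0,z)-p_{n-1,\omega}(y,z))w(z)\,dz$ (note the time shift $n$ versus $n-1$, which sets up the Chapman--Kolmogorov identity $\int p_{1,\omega}(0,y)p_{n-1,\omega}(y,z)\,dy=p_{n,\omega}(0,z)$), one applies Cauchy--Schwarz and the elementary inequality $2x\log(x)\geq 2x-2+(x-1)^2/(x+1)$ to bound $\int\abs{w(0)-w(y)}p_{1,\omega}(0,y)\,dy$ by the product of an entropy increment $\bigl(n(H_{n,\omega}(0)-P_{0,\omega}(H_{n-1,\tau_{X_1}\omega}(0)))\bigr)^{1/2}$ and $\bigl(\tfrac{1}{n}P_{0,\omega}(w^2(X_n))\bigr)^{1/2}$. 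The first factor is bounded along a subsequence almost surely because the entropy grows only logarithmically (Proposition \ref{e_log}) and the invariant measure $\pi$ of Theorem \ref{p_invariant} lets one pass from $P_{0,\omega}(H_{n-1,\tau_{X_1}\omega}(0))$ back to $\mathbb{E}_\pi(H_{n-1,\omega}(0))$; the second is $O(\epsilon^{1/2})$ by strict sub-linearity and the almost sure diffusivity limit of Theorem \ref{d_diffusive}. These three ingredients --- diffusivity, entropy growth, invariant measure --- are exactly what \cite{BDKY} isolates, and they are far weaker than the kernel gradient estimate you require. If you want to salvage your approach you would need to prove that gradient estimate independently, which is not done here and does not follow from the cited inputs.
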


Furthermore, we obtain the identical statement for bounded, ancient solutions to the time-independent problem.  In this case, we are concerned with bounded solutions $w:\mathbb{R}^d\times( -\infty,\infty)\rightarrow\mathbb{R}$ satisfying \begin{equation}\label{i_ancient} -\frac{1}{2}\tr(A(x,\omega)D^2w)+b(x,\omega)\cdot Dw=0\;\;\textrm{on}\;\;\mathbb{R}^d\times(0,\infty),\end{equation} where we will say that a bounded $w:\mathbb{R}^d\times(-\infty,\infty)\rightarrow\mathbb{R}$ satisfies (\ref{i_ancient}) if, for each $x\in\mathbb{R}^d$, $s\geq 0$ and $t\in (-\infty,\infty)$, \begin{equation}\label{i_ancient_sol} w(x,t+s)=\int_{\mathbb{R}^d}p_{s,\omega}(x,y)w(y,t)\;dy=P_{x,\omega}\left(w(X_s,t)\right).\end{equation}  The theorem follows.

\begin{thm}\label{i_ancient}  Assume (\ref{steady}) and (\ref{constants}).  There exists a subset of full probability on which the constant functions are the only bounded $w:\mathbb{R}^d\times(-\infty,\infty)\rightarrow\mathbb{R}$ satisfying (\ref{i_ancient_sol}).\end{thm}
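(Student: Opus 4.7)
The plan is to reduce the bounded ancient case to a long-time mixing statement for the heat kernel $p_{s,\omega}(x,\cdot)$, rather than reapplying Theorem \ref{i_main} directly. Fix any bounded $w$ satisfying (\ref{i_ancient_sol}). The semigroup identity (\ref{i_ancient_sol}) expresses $w(\cdot, t_0)$, for every $t_0 \in \R$, as the time-$s$ parabolic evolution of the bounded datum $w(\cdot, t_0 - s)$, for all $s \geq 0$. Sending $s \to \infty$, the long-time behavior of the semigroup under the isotropic, small-perturbation-of-Brownian setup --- controlled by the invariance principle of Sznitman and Zeitouni \cite{SZ} and the heat kernel estimates of \cite{F1} --- should collapse the spatial dependence of $w(\cdot, t_0)$ to a constant.

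Concretely, for fixed $t_0 \in \R$ and arbitrary $x_1, x_2 \in \R^d$, (\ref{i_ancient_sol}) gives
\[
\abs{w(x_1, t_0) - w(x_2, t_0)} \leq \norm{w}_\infty \int_{\R^d} \abs{p_{s, \omega}(x_1, y) - p_{s, \omega}(x_2, y)} \, dy.
\]
The central step is to show that, on a subset of full probability, the $L^1$ difference above converges to $0$ as $s \to \infty$ for each fixed pair $x_1, x_2$. In dimension $d \geq 3$, this should follow from a coupling argument combined with the annealed invariance principle of \cite{SZ}: two diffusions in the same environment started at nearby points can be coupled before time $s$ with probability tending to one, so the $L^1$ distance decays to zero. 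Alternatively, one may extract a rate from the pointwise heat kernel bounds and oscillation estimates implicit in \cite{F1}. Either way, we conclude $w(x_1, t_0) = w(x_2, t_0)$ for all $x_1, x_2$, so that $w(\cdot, t_0) \equiv c(t_0)$ for some constant $c(t_0)$.

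Time constancy then follows immediately: substituting $w(\cdot, t_0 - s) \equiv c(t_0 - s)$ back into (\ref{i_ancient_sol}) and using $\int p_{s, \omega}(x, y) \, dy = 1$ yields $c(t_0) = c(t_0 - s)$ for every $s \geq 0$, so $c$ is constant and $w$ is globally constant. The main obstacle is the mixing step: assembling the pointwise heat kernel information from \cite{F1, SZ} --- each defined only on its own full-probability subset --- and propagating it to $L^1$ decay of heat kernel differences uniformly on compact sets of starting points. The small-perturbation hypothesis and the finite-range, restricted isotropy condition should make this feasible, but it is the only place where genuinely probabilistic input beyond the deterministic parabolic regularity underlying (\ref{i_ancient_sol}) enters the argument.
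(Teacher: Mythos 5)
Your reduction is sound in outline---spatial constancy at each fixed time from long-time smoothing of the semigroup, then time constancy from $\int p_{s,\omega}(x,y)\,dy=1$---and the final step matches the paper exactly. But the entire content of the theorem lives in the step you leave unproved: the claim that $\int_{\mathbb{R}^d}\abs{p_{s,\omega}(x_1,y)-p_{s,\omega}(x_2,y)}\,dy\to 0$ as $s\to\infty$ on a set of full probability. Neither of your two suggested routes closes this. The annealed invariance principle of \cite{SZ} is a statement about the law of $X_s/\sqrt{s}$ under $\mathbb{P}_x$; it does not yield a quenched coupling of two diffusions in the \emph{same} environment, and constructing such a coupling with success probability tending to one is essentially as hard as the Liouville property itself. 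Likewise, the pointwise Gaussian bounds (\ref{e_green}) are short-time Aronson-type estimates with constants depending on $T$ and give no decay of oscillation as $s\to\infty$. So the ``mixing step'' is asserted, not derived from the cited inputs.

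What the paper actually does in place of this step is Proposition \ref{a_main}: on the high-probability events $F_n$ of (\ref{o_subset}), the quenched operator $R_{n+1}$ over time $L_{n+1}^2$ agrees, up to an error $CL_n^{\beta-7(\delta-2a)}\norm{f}_{C^{0,\beta}}$ and on the ball $B_{\tilde{D}_{n+1}}$, with $\bigl(\overline{R}_n\bigr)^{\ell_n^2-6}\bigl(R_n\bigr)^6$, obtained by expanding $\bigl(\tilde{\chi}S_n+\tilde{\chi}\overline{R}_n\bigr)^{\ell_n^2}$ and using Control \ref{Holder} to kill all terms with many factors of $S_n$. Combined with the uniform spatial H\"older regularity of bounded ancient solutions (Proposition \ref{a_weak}) and the gradient bound $\norm{D\overline{R}_n^{\ell_n^2-6}f}_\infty\leq CL_{n+1}^{-1}\norm{f}_\infty$ (Proposition \ref{a_regular}), this shows $w(\cdot,t)$ is uniformly approximated on growing balls by functions with vanishing gradient, hence constant---without ever proving $L^1$ convergence of the kernels themselves. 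If you want to salvage your formulation, note that the total variation statement would in fact follow from Proposition \ref{a_main} applied to $R_1f$ (to regularize arbitrary bounded data) together with the gradient decay of $\overline{R}_n^{\ell_n^2-6}$; but then you are proving Proposition \ref{a_main} anyway, and that perturbative expansion on the events $F_n$ is the missing core of your argument.
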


The proof of both results rely upon the previous work of Fehrman \cite{F1} and \cite{SZ}.  And, the proof of Theorem \ref{i_main} is motivated by the methods of Benjamini, Duminil-Copin, Kozma and Yadin \cite{BDKY}, where it was shown in the discrete setting that, in the presence of an invariant measure, a Liouville property may be obtained for the environment whenever we have an effective control, in expectation, of the ensemble of limiting diffusivities $$\limsup_{t\rightarrow\infty} \frac{1}{td}P_{0,\omega}\left(\abs{X_t}^2\right),$$ and, a control from above of the physical entropies determined by the Green's functions, as defined, for each $x\in\mathbb{R}^d$, $t> 0$ and $\omega\in\Omega$, as $$H_{t,\omega}(x)=\int_{\mathbb{R}^d}-p_{t,\omega}(x,y)\log(p_{t,\omega}(x,y))\;dy.$$

To this end, using the results of \cite{SZ}, we prove in Section 3 that, on a subset of full probability, for $\overline{\alpha}>0$ identified in Theorem \ref{effectivediffusivity}, $$\lim_{t\rightarrow\infty}\frac{1}{td}P_{0,\omega}\left(\abs{X_t}^2\right)=\overline{\alpha}.$$  And, in Section 4, we prove under general assumptions that the physical entropy grows at most logarithmically.  Precisely, for each $\omega\in\Omega$, for $C>0$ independent of $t\geq 1$, we have $$H_{t,\omega}(0)=\int_{\mathbb{R}^d}-p_{t,\omega}(0,y)\log(p_{t,\omega}(0,y))\;dy\leq C\left(\log(t)+1\right).$$  Finally, the existence of an invariant measure for the environment was shown in \cite{F1}, which we recall together with the results of \cite{SZ} in Section 2.  The proof of Theorem \ref{i_main} is presented in Section 5 and, in Section 6, we use the methods of \cite{F1} to prove Theorem \ref{i_ancient}.

\subsection*{Acknowledgments}

I would like to thank Professor Panagiotis Souganidis for suggesting this problem,  and I would like to thank Professors Panagiotis Souganidis, Ofer Zeitouni and Luis Silvestre for many useful conversations.

\section{Preliminaries} 

\subsection{Notation}

Elements of $\mathbb{R}^d$ and $[0,\infty)$ are denoted by $x$ and $y$ and $t$ respectively and $(x,y)$ denotes the standard inner product on $\mathbb{R}^d$.  We write $Dv$ and $v_t$ for the derivative of the scalar function $v$ with respect to $x\in\mathbb{R}^d$ and $t\in[0,\infty)$, while $D^2v$ stands for the Hessian of $v$.  The spaces of $k\times l$ and $k\times k$ symmetric matrices with real entries are respectively written $\mathcal{M}^{k\times l}$ and $\mathcal{S}(k)$.  If $M\in\mathcal{M}^{k\times l}$, then $M^t$ is its transpose and $\abs{M}$ is its norm $\abs{M}=\tr(MM^t)^{1/2}.$  If $M$ is a square matrix, we write $\tr(M)$ for the trace of $M$.  The Euclidean distance between subsets $A,B\subset\mathbb{R}^d$ is $$d(A,B)=\inf\left\{\;\abs{a-b}\;|\;a\in A, b\in B\;\right\}$$  and, for an index $\mathcal{A}$ and a family of measurable functions $\left\{f_\alpha:\mathbb{R}^d\times\Omega\rightarrow\mathbb{R}^{n_\alpha}\right\}_{\alpha\in\mathcal{A}}$, we write $$\sigma(f_\alpha(x,\omega)\;|\;x\in A, \alpha\in\mathcal{A})$$ for the sigma algebra generated by the random variables $f_\alpha(x,\omega)$ for $x\in A$ and $\alpha\in\mathcal{A}$.  For $U\subset\mathbb{R}^d$, $\USC(U;\mathbb{R}^d)$, $\LSC(U;\mathbb{R}^d)$, $\BUC(U;\mathbb{R}^d)$, $\C(U;\mathbb{R}^d)$, $\Lip(U;\mathbb{R}^d)$, $\C^{0,\beta}(U;\mathbb{R}^d)$ and $\C^k(U;\mathbb{R}^d)$ are the spaces of upper-semicontinuous, lower-semicontinuous, bounded continuous, continuous, Lipschitz continuous, $\beta$-H\"{o}lder continuous and $k$-continuously differentiable functions on $U$ with values in $\mathbb{R}^d$.  For $f:\mathbb{R}^d\rightarrow\mathbb{R}$, we write $\Supp(f)$ for the support of $f$.  Furthermore, $B_R$ and $B_R(x)$ are respectively the open balls of radius $R$ centered at zero and $x\in\mathbb{R}^d$.  For a real number $r\in\mathbb{R}$ we write $\left[r\right]$ for the largest integer less than or equal to $r$.  Finally, throughout the paper we write $C$ for constants that may change from line to line but are independent of $\omega\in\Omega$ unless otherwise indicated.

\subsection{The Random Environment}

There exists an underlying probability space $(\Omega,\mathcal{F},\mathbb{P})$ indexing the individual realizations of the random environment.   Since the environment is described, for each $x\in\mathbb{R}^d$ and $\omega\in\Omega$, by the diffusion matrix $A(x,\omega)$ and drift $b(x,\omega)$, we may take \begin{equation}\label{sigmaalgebra} \mathcal{F}=\sigma\left(A(x,\omega),b(x,\omega)\;|\;x\in\mathbb{R}^d\right).\end{equation}  Furthermore, we assume this space is equipped with an \begin{equation}\label{transgroup} \textrm{ergodic group of measure-preserving transformations}\; \left\{\tau_x:\Omega\rightarrow\Omega\right\}_{x\in\mathbb{R}^d},\end{equation} such that the coefficients $A:\mathbb{R}^d\times\Omega\rightarrow\mathcal{S}(d)$ and $b:\mathbb{R}^d\times\Omega\rightarrow\mathbb{R}^d$ are bi-measurable stationary functions satisfying, for each $x,y\in\mathbb{R}^d$ and $\omega\in\Omega$, \begin{equation}\label{stationary} A(x+y,\omega)=A(x,\tau_y\omega)\;\;\textrm{and}\;\;b(x+y,\omega)=b(x,\tau_y\omega).\end{equation}

We assume that the diffusion matrix and drift are bounded and Lipschitz uniformly for $\omega\in\Omega$.  There exists $C>0$ such that, for all $y\in\mathbb{R}^d$ and $\omega\in\Omega$,  \begin{equation}\label{bounded} \abs{b(y,\omega)}\leq C\;\;\;\textrm{and}\;\;\;\abs{A(y,\omega)}\leq C \end{equation} and, for all $x,y\in\mathbb{R}^d$ and $\omega\in\Omega$, \begin{equation}\label{Lipschitz} \abs{b(x,\omega)-b(y,\omega)}\leq C\abs{x-y}\;\;\;\textrm{and}\;\;\;\abs{A(x,\omega)-A(y,\omega)}\leq C\abs{x-y}.\end{equation}  In addition, we assume that the diffusion matrix is uniformly elliptic uniformly in $\Omega$.  There exists $\nu>1$ such that, for all $y\in\mathbb{R}^d$ and $\omega\in\Omega$, \begin{equation}\label{elliptic} \frac{1}{\nu} I\leq A(y,\omega)\leq \nu I.\end{equation}

The coefficients satisfy a finite range dependence.  There exists $R>0$ such that, whenever $A,B\subset\mathbb{R}^d$ satisfy $d(A,B)\geq R$, the sigma algebras \begin{equation}\label{finitedep} \sigma(A(x,\omega), b(x,\omega)\;|\;x\in A)\;\;\;\textrm{and}\;\;\; \sigma(A(x,\omega), b(x,\omega)\;|\;x\in B)\;\;\;\textrm{are independent.}\end{equation}  The diffusion matrix and drift satisfy a restricted isotropy condition.  For every orthogonal transformation $r:\mathbb{R}^d\rightarrow\mathbb{R}^d$ which preserves the coordinate axes, for every $x\in\mathbb{R}^d$, \begin{equation}\label{isotropy} (A(rx,\omega),b(rx,\omega))\;\;\;\textrm{and}\;\;\;(rA(x,\omega)r^t,rb(x,\omega))\;\;\;\textrm{have the same law.}\end{equation}  And, finally, the diffusion matrix and drift are a small perturbation of the Laplacian.  There exists $\eta_0>0$, to later be chosen small, such that, for all $y\in\mathbb{R}^d$ and $\omega\in\Omega$, \begin{equation}\label{perturbation} \abs{b(y,\omega)}\leq\eta_0\;\;\textrm{and}\;\;\abs{A(y,\omega)-I}\leq \eta_0.\end{equation}

To avoid cumbersome statements in what follows, we introduce a steady assumption.  \begin{equation}\label{steady}\textrm{Assume}\;(\ref{sigmaalgebra}),(\ref{transgroup}), (\ref{stationary}), (\ref{bounded}), (\ref{Lipschitz}), (\ref{elliptic}), (\ref{finitedep}), (\ref{isotropy})\;\textrm{and}\;(\ref{perturbation}).\end{equation}

The collection of assumptions (\ref{transgroup}), (\ref{stationary}), (\ref{bounded}), (\ref{Lipschitz}) and (\ref{elliptic}) guarantee the well-posedness of the martingale problem set on $\mathbb{R}^d$ for each $\omega\in\Omega$ and $x\in\mathbb{R}^d$, see \cite{SV}, associated to to the generator $$\frac{1}{2}\sum_{i,j=1}^da_{ij}(y,\omega)\frac{\partial^2}{\partial y_i\partial y_j}-\sum_{i=1}^db_i(y,\omega)\frac{\partial}{\partial y_i}.$$  We write $P_{x,\omega}$ and $E_{x,\omega}$ for the corresponding probability measure and expectation on the space of continuous paths $\C([0,\infty);\mathbb{R}^d)$ and remark that, almost surely with respect to $P_{x,\omega}$, paths $X_t\in\C([0,\infty);\mathbb{R}^d)$ satisfy the stochastic differential equation \begin{equation}\label{sde}\left\{\begin{array}{l} dX_t=-b(X_t,\omega)dt+\sigma(X_t,\omega)dB_t, \\ X_0=x,\end{array}\right.\end{equation} for $A(y,\omega)=\sigma(y,\omega)\sigma(y,\omega)^t$, and for $B_t$ a standard Brownian motion under $P_{x,\omega}$ with respect to the canonical right-continuous filtration on $\C([0,\infty);\mathbb{R}^d)$.

We write $\mathbb{P}_x=\mathbb{P}\ltimes P_{x,\omega}$ and $\mathbb{E}_x=\mathbb{E}\ltimes E_{x,\omega}$ for the corresponding semi-direct product measure and expectation on $\Omega\times\C([0,\infty);\mathbb{R}^d)$.  The annealed law $\mathbb{P}_x$ inherits the translation invariance and restricted rotational invariance implied by (\ref{stationary}) and (\ref{isotropy}).  In particular, for all $x,y\in\mathbb{R}^d$, \begin{equation}\label{annealed} \mathbb{E}_{x+y}(X_t)=\mathbb{E}_y(x+X_t)=x+\mathbb{E}_y(X_t),\end{equation} and, for all orthogonal transformations $r$ preserving the coordinate axis and for every $x\in\mathbb{R}^d$, \begin{equation}\label{annealed1} \mathbb{E}_{x}(rX_t)=\mathbb{E}_{rx}(X_t).\end{equation}   This stands in contrast to the quenched laws $P_{x,\omega}$, for which no invariance properties can be expected to hold, in general.

\subsection{A Review of \cite{SZ}}  In this section, we review the aspects of \cite{SZ} most relevant to our arguments.  Observe that this summary is by no means complete, as considerably more was achieved in their paper than we mention here.

We are interested in the long term behavior of the equation, for a fixed, H\"older continuous function $f:\mathbb{R}^d\rightarrow\mathbb{R}$, \begin{equation}\label{review_eq}\left\{\begin{array}{ll} u_t-\frac{1}{2}\tr(A(x,\omega)D^2u)+b(x,\omega)\cdot Du=0 & \textrm{on}\;\;\mathbb{R}^d\times(0,\infty), \\ u=f & \textrm{on}\;\;\mathbb{R}^d\times\left\{0\right\}.\end{array}\right.\end{equation} This is essentially achieved by comparing the solutions of (\ref{review_eq}) to the solution of the deterministic problem, for $\overline{\alpha}>0$ identified in Theorem \ref{effectivediffusivity}, \begin{equation}\label{review_hom}\left\{\begin{array}{ll} \overline{u}_t-\frac{\overline{\alpha}}{2}\Delta \overline{u}=0 & \textrm{on}\;\;\mathbb{R}^d\times(0,\infty), \\ \overline{u}=f & \textrm{on}\;\;\mathbb{R}^d\times\left\{0\right\},\end{array}\right.\end{equation} along an increasing sequence of length and time scales.

The constant $\overline{\alpha}$ determining (\ref{review_hom}) is identified in \cite{SZ} through a process we describe after introducing some notation.  Fix the dimension \begin{equation}\label{dimension} d\geq 3, \end{equation} and fix a H\"older exponent \begin{equation}\label{Holderexponent} \beta\in\left(0,\frac{1}{2}\right]\;\;\textrm{and a constant}\;\;a\in \left(0,\frac{\beta}{1000d}\right]. \end{equation}

Let $L_0$ be a large integer multiple of five.  For each $n\geq 0$, inductively define \begin{equation}\label{L} \ell_n=5\left[\frac{L_n^a}{5}\right]\;\;\textrm{and}\;\;L_{n+1}=\ell_n L_n, \end{equation} so that, for $L_0$ sufficiently large, we have $\frac{1}{2}L_n^{1+a}\leq L_{n+1}\leq 2L_n^{1+a}$.  For each $n\geq 0$, for $c_0>0$, let \begin{equation}\label{kappa} \kappa_n=\exp(c_0(\log\log(L_n))^2)\;\;\textrm{and}\;\;\tilde{\kappa}_n=\exp(2c_0(\log\log(L_n))^2),\end{equation} where we remark that, as $n$ tends to infinity, $\kappa_n$ is eventually dominated by every positive power of $L_n$.  Furthermore, define, for each $n\geq 0$, \begin{equation}\label{D} D_n=L_n\kappa_n\;\;\textrm{and}\;\;\tilde{D}_n=L_n\tilde{\kappa}_n.\end{equation}  We choose $L_0$ sufficiently large such that, for each $n\geq 0$, \begin{equation}\label{D_1} L_n<D_n< \tilde{D}_n< L_{n+1},\;\;4\tilde{\kappa}_n<\tilde{\kappa}_{n+1}\;\;\textrm{and}\;\; 3\tilde{D}_{n+1} < L_{n+1}^2.\end{equation}

The following constants enter into the probabilistic statements below.  Fix $m_0\geq 2$ satisfying \begin{equation}\label{m0} (1+a)^{m_0-2}\leq 100<(1+a)^{m_0-1}, \end{equation}  and $\delta>0$ and $M_0>0$ satisfying \begin{equation}\label{delta} \delta=\frac{5}{32}\beta\;\;\textrm{and}\;\;M_0\geq100d(1+a)^{m_0+2}.\end{equation}  In the arguments to follow, we will use the fact that $\delta$ and $M_0$ are sufficiently larger than $a$.

We now describe the identification of $\overline{\alpha}$.  Recall, for each $x\in\mathbb{R}^d$ and $\omega\in\Omega$, the quenched law $P_{x,\omega}$ on $\C([0,\infty);\mathbb{R}^d)$ and, for each $x\in\mathbb{R}^d$, the annealed law $\mathbb{P}_x$ on $\Omega\times\C([0,\infty);\mathbb{R}^d)$.  The constant $\overline{\alpha}$ is effectively identified as the limit of the effective diffusivities, in average, of the ensemble of equations (\ref{review_eq}) along the sequence of time steps $L_n^2$.  However, so as to apply the finite range dependence, see (\ref{finitedep}), the stopping time \begin{equation}\label{stopping} T_n=\inf\left\{\;s\geq0\;|\;\abs{X_s-X_0}\geq\tilde{D}_n\;\right\}\end{equation} is introduced, for each $n\geq 0$, and the approximate effective diffusivity of ensemble (\ref{review_eq}) is defined as \begin{equation}\label{alphan} \alpha_n=\frac{1}{dL_n^2}\mathbb{E}_0[\abs{X_{T_n\wedge L_n^2}}^2].\end{equation}  The following theorem describes the control and convergence of the $\alpha_n$ to $\overline{\alpha}$.

\begin{thm}\label{effectivediffusivity} Assume (\ref{steady}).  There exists $L_0$ and $c_0$ sufficiently large and $\eta_0>0$ sufficiently small such that, for all $n\geq 0$, $$\frac{1}{2\nu}\leq \alpha_n\leq 2\nu\;\;\textrm{and}\;\;\abs{\alpha_{n+1}-\alpha_n}\leq L_n^{-(1+\frac{9}{10})\delta},$$  which implies the existence of $\overline{\alpha}>0$ satisfying $$\frac{1}{2\nu}\leq \overline{\alpha}\leq 2\nu\;\;\textrm{and}\;\;\lim_{n\rightarrow\infty}\alpha_n=\overline{\alpha}.$$\end{thm}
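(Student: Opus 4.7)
My strategy would be a multiscale renormalization: establish the two-sided bound on $\alpha_n$ directly from the SDE, then prove the Cauchy estimate $\abs{\alpha_{n+1}-\alpha_n}\leq L_n^{-(1+9/10)\delta}$ by comparing the diffusion at scale $L_{n+1}$ with a concatenation of approximately $\ell_n^2$ nearly-independent scale-$n$ copies, exploiting the finite range dependence (\ref{finitedep}) and the restricted isotropy (\ref{isotropy}).

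For the uniform bound, I would apply It\^o's formula to $\abs{X_t}^2$ under $P_{0,\omega}$ along paths stopped at $T_n\wedge L_n^2$. Using (\ref{sde}) this gives
\[
\mathbb{E}_0[\abs{X_{T_n\wedge L_n^2}}^2]=\mathbb{E}_0\!\left[\int_0^{T_n\wedge L_n^2}\!\!\tr A(X_s,\omega)\,ds\right]-2\mathbb{E}_0\!\left[\int_0^{T_n\wedge L_n^2}\!\!(X_s,b(X_s,\omega))\,ds\right].
\]
The ellipticity (\ref{elliptic}) sandwiches the trace between $d/\nu$ and $d\nu$; the bound $\abs{X_s}\leq \tilde{D}_n$ together with the smallness (\ref{perturbation}) of $b$ keeps the drift contribution negligibly small once $\eta_0$ is small; and since $\tilde{D}_n\gg L_n$ by (\ref{D_1}), standard martingale tail estimates give $\mathbb{P}_0(T_n\leq L_n^2)$ exponentially small, so $\mathbb{E}_0[T_n\wedge L_n^2]$ is essentially $L_n^2$. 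Dividing by $dL_n^2$ yields $\frac{1}{2\nu}\leq\alpha_n\leq 2\nu$.

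For the Cauchy estimate I would decompose the path over $[0,L_{n+1}^2\wedge T_{n+1}]$ into $\ell_n^2$ successive scale-$n$ blocks. Setting $\tau_0=0$ and $\tau_{k+1}=\tau_k+(T_n\circ\theta_{\tau_k})\wedge L_n^2$, each increment $Y_k=X_{\tau_{k+1}}-X_{\tau_k}$ has annealed squared norm approximately $dL_n^2\alpha_n$ by the strong Markov property and the stationarity (\ref{stationary}). Expanding,
\[
\mathbb{E}_0[\abs{X_{\tau_{\ell_n^2}}}^2]=\sum_{k<\ell_n^2}\mathbb{E}_0[\abs{Y_k}^2]+2\sum_{j<k}\mathbb{E}_0[(Y_j,Y_k)],
\]
the diagonal contributes $\ell_n^2\cdot dL_n^2\alpha_n=dL_{n+1}^2\alpha_n$. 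The cross terms would vanish for centered independent $Y_k$; they are not, but the annealed mean of each $Y_k$ vanishes by the axis symmetry in (\ref{isotropy}) applied to the ensemble, and blocks separated by more than $R$ decouple to leading order by (\ref{finitedep}) thanks to the buffer $\tilde{D}_n-L_n\gg R$ built into (\ref{D_1}). What remains is the quenched deviation of the conditional mean from zero together with the boundary events $\{T_n\leq L_n^2\}$ and $\{T_{n+1}\leq L_{n+1}^2\}$; the former is controlled by the perturbation parameter $\eta_0$ and the inductive concentration estimates of \cite{SZ}, the latter shown in \cite{SZ} to have probability of order $L_n^{-M_0}$, which is negligible by the choice of $M_0$ in (\ref{delta}).

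The hard part is calibrating all of these errors to beat the threshold $L_n^{-(1+9/10)\delta}$ simultaneously. This is the heart of the \cite{SZ} inductive scheme: the concentration bounds at scale $n$ feed into the cross-term control at scale $n+1$, and the exponents $\beta,a,\delta,M_0$ in (\ref{Holderexponent})--(\ref{delta}), together with small $\eta_0$ and large $L_0,c_0$, are chosen precisely so the iteration closes. Once the Cauchy estimate is in hand, $L_{n+1}\geq\tfrac{1}{2}L_n^{1+a}$ makes $\sum_n L_n^{-(1+9/10)\delta}$ a rapidly convergent series, so $(\alpha_n)$ converges to some $\overline{\alpha}\in[1/(2\nu),2\nu]$.
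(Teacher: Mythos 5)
First, note that the paper does not prove Theorem \ref{effectivediffusivity} at all: it is stated in the review subsection as an imported result of Sznitman and Zeitouni \cite{SZ}, so there is no internal proof to compare against. Your proposal is in effect a heuristic reconstruction of the \cite{SZ} strategy, and you correctly identify its skeleton: a block decomposition at scale $L_n$, annealed centering via the restricted isotropy, decoupling via the finite range dependence, and an induction calibrating the exponents. To that extent the outline points in the right direction, but it ultimately defers the entire content of the theorem back to \cite{SZ}.

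Moreover, two of the steps you present as elementary would fail as stated. First, the two-sided bound $\frac{1}{2\nu}\le\alpha_n\le 2\nu$ does not follow ``directly from the SDE.'' It\^o's formula gives the identity you write, but the drift term is only controlled by $2\eta_0\,\mathbb{E}_0\!\left[\int_0^{T_n\wedge L_n^2}\abs{X_s}\,ds\right]$, and the only a priori bound available is $\abs{X_s}\le\tilde{D}_n$; after dividing by $dL_n^2$ this contributes an error of order $\eta_0\tilde{D}_n$, which tends to infinity with $n$ no matter how small $\eta_0$ is (a Gronwall argument fares no better, yielding $\alpha_n\lesssim \nu+\eta_0^2L_n^2$). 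A drift of size $\eta_0$ can in principle produce ballistic behavior, and ruling this out is precisely what the isotropy-driven renormalization of \cite{SZ} accomplishes; the bounds on $\alpha_n$ are propagated inductively together with the H\"older and exit-time controls, not derived in advance. Second, the claim that ``the annealed mean of each $Y_k$ vanishes by the axis symmetry'' is correct only for $k=0$: for $k\ge1$ the law of the environment as seen from $X_{\tau_k}$ under $\mathbb{P}_0$ is no longer $\mathbb{P}$, since the path biases the environment at its current location, so (\ref{isotropy}) cannot be invoked to center $Y_k$. This is the central obstruction of the whole problem, and it too is resolved only inside the induction you defer to. As these points, together with the calibration of exponents, constitute essentially the entire difficulty, the proposal should be regarded as a citation of \cite{SZ} rather than a proof.
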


We now describe the comparison between solutions of (\ref{review_eq}) and (\ref{review_hom}).  First, we compare solutions of (\ref{review_eq}), for each $n\geq 0$, at time $L_n^2$, with respect to a H\"{o}lder norm at scale $L_n$, to solutions of the deterministic problem \begin{equation}\label{review_approximate}\left\{\begin{array}{ll} u_{n,t}-\frac{\alpha_n}{2}\Delta u_n=0 & \textrm{on}\;\;\mathbb{R}^d\times(0,\infty), \\ u_{n,t}=f & \textrm{on}\;\;\mathbb{R}^d\times\left\{0\right\}.\end{array}\right. \end{equation}  To do so, we introduce, for each $n\geq 0$, the rescaled H\"older norm \begin{equation}\label{levelholder} \abs{u_0}_n=\sup_{x\in\mathbb{R}^d}\abs{u_0(x)}+L_n^\beta\sup_{x\neq y}\frac{\abs{u_0(x)-u_0(y)}}{\abs{x-y}^\beta}.\end{equation}

We will obtain a localized control of the difference between solutions of (\ref{review_eq}) and (\ref{review_approximate}) at time $L_n^2$.  This localization is obtained via a cutoff function.  For each $v>0$, let\begin{equation}\label{cutoff} \chi(y)=1\wedge(2-\abs{y})_+\;\;\textrm{and}\;\;\chi_{v}(y)=\chi\left(\frac{y}{v}\right), \end{equation} and define, for each $x\in\mathbb{R}^d$ and $n\geq 0$, \begin{equation}\label{cutoff1}  \chi_{n,x}(y)=\chi_{30\sqrt{d}L_n}(y-x).\end{equation}  The following result then describes the desired comparison between solutions of (\ref{review_eq}) and (\ref{review_approximate}), at time $L_n^2$, for H\"older continuous initial data.

\begin{con}\label{Holder}  Fix $x\in\mathbb{R}^d$, $\omega\in\Omega$ and $n\geq 0$.  Let $u$ and $u_n$ respectively denote the solutions of (\ref{review_eq}) and (\ref{review_approximate}) corresponding to initial data $f\in\C^{0,\beta}(\mathbb{R}^d)$.  We have $$\abs{\chi_{n,x}(y)\left(u(y,L_n^2)-u_n(y,L_n^2)\right)}_n\leq L_n^{-\delta}\abs{f}_n.$$\end{con}

Notice that this control depends upon $x\in\mathbb{R}^d$, $\omega\in\Omega$ and $n\geq 0$.  It is not true, in general, that this type of contraction is available for all such triples $(x,\omega,n)$.  However, as described below, it is shown in \cite{SZ} that such controls are available for large $n$, with high probability, on a large portion of space.

The final control we will use concerns tail-estimates for the diffusion process.  We wish to control, under $P_{x,\omega}$, for $X_t\in\C([0,\infty);\mathbb{R}^d)$, the probability that \begin{equation}\label{star} X_t^*=\max_{0\leq s\leq t}\abs{X_s-X_0}\end{equation} is large with respect to the time elapsed.  The desired control contained in the following proposition is similar to the standard exponential estimates for Brownian motion at large length scales.

\begin{con}\label{localization}  Fix $x\in\mathbb{R}^d$, $\omega\in\Omega$ and $n\geq 0$.  For each $v\geq D_n$, for all $\abs{y-x}\leq 30\sqrt{d}L_n$, $$P_{y,\omega}(X^*_{L_n^2}\geq v)\leq \exp\left(-\frac{v}{D_n}\right).$$\end{con}

As with Control \ref{Holder}, this control depends upon $x\in\mathbb{R}^d$, $\omega\in\Omega$ and $n\geq 0$.  It is not true, in general, that this type of localization control is available for all such triples $(x,\omega,n)$, but it is shown in \cite{SZ} that such controls are available for large $n$, with high probability, on a large portion of space.

We now introduce the primary probabilistic statement concerning Controls \ref{Holder} and \ref{localization}.  Notice that the event defined below does not include the control of traps described in \cite{SZ}, which play in important role in propagating Control \ref{Holder} in their arguments.  Since we simply use the H\"older control there obtained, we do not require a further use of their control of traps.

Consider, for each $x\in\mathbb{R}^d$, the event \begin{equation}\label{mainevent} \mathcal{B}_n(x)=\left\{\;\omega\in\Omega\;|\;\textrm{Controls \ref{Holder} and \ref{localization} hold for the triple}\;(x,\omega,n).\;\right\}.\end{equation}  Notice that, in view of (\ref{stationary}), for all $x\in\mathbb{R}^d$ and $n\geq 0$, \begin{equation}\label{mainevent1}\mathbb{P}(B_n(x))=\mathbb{P}(B_n(0)).\end{equation} It is therefore shown that the probability of the compliment of $B_n(0)$ approaches zero as $n$ tends to infinity.

\begin{thm}\label{induction}  Assume (\ref{steady}).  There exist $L_0$ and $c_0$ sufficiently large and $\eta_0>0$ sufficiently small such that, for each $n\geq 0$, $$\mathbb{P}\left(\Omega\setminus B_n(0)\right)\leq L_n^{-M_0}.$$\end{thm}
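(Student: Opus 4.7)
The plan is to prove Theorem \ref{induction} by induction on $n \geq 0$, following the multi-scale renormalization scheme of Bricmont--Kupiainen adapted to the continuous setting. The base case is handled deterministically: once $L_0$ is chosen sufficiently large relative to the structural constants in (\ref{bounded}), (\ref{Lipschitz}), (\ref{elliptic}), Schauder estimates for the random parabolic operator at the fixed scale $L_0$ give Control \ref{Holder} uniformly in $\omega$, while the classical exponential martingale bound for uniformly elliptic diffusions yields Control \ref{localization} uniformly in $\omega$. Consequently $\Prob(\Omega\setminus B_0(0)) = 0 \leq L_0^{-M_0}$, with room to spare.

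For the inductive step, I would cover the relevant region by $O(\ell_n^d)$ sub-boxes of side $L_n$ and decompose time $L_{n+1}^2 = \ell_n^2 L_n^2$ into $\ell_n^2$ sub-intervals of length $L_n^2$. On each ``good'' sub-box, Control \ref{Holder} at level $n$ approximates the random evolution over one sub-interval by the $\alpha_n$-heat equation up to a loss $L_n^{-\delta}$ in the scaled H\"older norm (\ref{levelholder}). Parabolic smoothing inherent to the comparison equation prevents these errors from accumulating too badly over $\ell_n^2$ iterations, and the rate $\abs{\alpha_{n+1} - \alpha_n} \leq L_n^{-(1+9\delta/10)}$ furnished by Theorem \ref{effectivediffusivity} transfers the comparison to the $\alpha_{n+1}$-heat equation; because $\delta \gg a$ by (\ref{delta}), the total error fits within the budget $L_{n+1}^{-\delta}$. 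Control \ref{localization} at level $n+1$ is propagated by the strong Markov property: concatenating the level-$n$ exponential tails via a Chernoff argument across the $\ell_n^2$ increments, together with the gap $\tilde\kappa_{n+1} > 4\tilde\kappa_n$ in (\ref{D_1}), converts the level-$n$ tail $\exp(-v/D_n)$ into the level-$(n+1)$ tail $\exp(-v/D_{n+1})$ for $v \geq D_{n+1}$.

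The main obstacle is the probability accounting. The failure event at level $n+1$ must be decomposed into (i) the event that too many of the $O(\ell_n^d)$ sub-boxes fail $\mathcal{B}_n$, and (ii) the event that despite starting in a good region the diffusion visits a bad box before time $L_{n+1}^2$. For (i), finite range dependence (\ref{finitedep}) implies that failure indicators of sub-boxes separated by at least $R$ are independent, so a Bernstein-type bound controls the number of failures, each Bernoulli having mean at most $L_n^{-M_0}$ by the inductive hypothesis and stationarity (\ref{mainevent1}); restricted isotropy (\ref{isotropy}) plays the auxiliary role of cancelling the first-order drift that would spoil the averaging of the coarse-grained increments. For (ii), the level-$n$ Control \ref{localization} confines the trajectory to a neighbourhood of controlled radius with exponentially small exceptional probability, so that the diffusion avoids the sparse bad set of (i) with high probability.

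The delicate point, and the most subtle step of the whole argument, is that the naive union bound $\ell_n^d \cdot L_n^{-M_0} \sim L_n^{ad-M_0}$ is \emph{not} small enough to compete with $L_{n+1}^{-M_0} = L_n^{-(1+a)M_0}$; one must instead allow a sparse, controlled set of bad boxes, show the diffusion avoids them, and then apply Bernstein concentration to the bad-box count with an exponent polynomial in $\ell_n^d$. The inequality $M_0 \geq 100d(1+a)^{m_0+2}$ from (\ref{delta}) is precisely the slack built in to make this accounting close, and the smallness of $\eta_0$ from (\ref{perturbation}) is what guarantees that the perturbative comparison with the constant-coefficient heat equation has the factor $L_n^{-\delta}$ in Control \ref{Holder} with a good enough constant to survive the iteration.
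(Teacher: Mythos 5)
The paper does not prove Theorem \ref{induction}: it is quoted from Sznitman and Zeitouni \cite{SZ} as part of the review in Section 2, so there is no internal proof to compare your argument against. Your sketch is a reasonable outline of the Bricmont--Kupiainen-style renormalization that \cite{SZ} carries out, but as written the induction does not close, for a reason the paper itself flags.

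First, the paper explicitly warns that the event $\mathcal{B}_n(x)$ of (\ref{mainevent}) ``does not include the control of traps described in \cite{SZ}, which play an important role in propagating Control \ref{Holder} in their arguments.'' An induction hypothesis consisting of Controls \ref{Holder} and \ref{localization} alone is therefore not self-propagating: Control \ref{localization} bounds how far the diffusion travels in time $L_n^2$, but says nothing about how long it lingers near a bad sub-box, and the contribution of such a sojourn to the level-$(n+1)$ H\"older comparison is not controlled by anything in your hypothesis. This is exactly what the additional ``trap'' controls in \cite{SZ} (quantitative bounds on the time spent in, and the influence of, bad regions) are for, and your step (ii) --- ``the diffusion avoids the sparse bad set'' --- silently assumes avoidance where one actually needs a bound on occupation time. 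Second, your error budget does not close as stated: composing $\ell_n^2$ applications of Control \ref{Holder}, each costing $L_n^{-\delta}$, yields (as in the computation (\ref{d_main_8}) of this paper) an error of order $L_n^{2a-\delta}$, which is \emph{larger} than the target $L_{n+1}^{-\delta}=L_n^{-(1+a)\delta}$; the contraction at level $n+1$ is recovered in \cite{SZ} only by exploiting the rescaling of the H\"older seminorm from $L_n^{\beta}$ to $L_{n+1}^{\beta}$ in (\ref{levelholder}) together with the smoothing of the limiting heat kernel, neither of which appears in your accounting. Finally, the base case is not a Schauder estimate at a fixed scale: it is a perturbative comparison requiring $\eta_0$ small relative to $L_0$, as the order of quantifiers in the statement indicates.
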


We henceforth fix the constants $L_0$, $c_0$ and $\eta_0$ appearing above.  \begin{equation}\label{constants} \textrm{Fix constants}\;L_0, c_0\;\textrm{and}\;\eta_0\;\textrm{satisfying (\ref{D_1}) and the hypothesis of Theorems \ref{effectivediffusivity} and \ref{induction}.}\end{equation}

We conclude this section with a few basic observations concerning Control \ref{Holder}, Control \ref{localization} and the H\"older norms introduced in (\ref{levelholder}).  Since Control \ref{Holder} cannot be expected to hold globally in space, it will be frequently necessary to introduce cutoff functions of the type appearing in (\ref{cutoff}).  The primary purpose of Control \ref{localization} is to bound the error we introduce, as seen in the following proposition.

\begin{prop}\label{local11}  Assume (\ref{steady}) and (\ref{constants}).  Fix $x\in\mathbb{R}^d$, $\omega\in\Omega$ and $n\geq 0$ and suppose that Control \ref{localization} is satisfied for the triple $(x,\omega,n)$.  For $f\in L^\infty(\mathbb{R}^d)$ satisfying $$\d\left(\Supp(f), B_{30\sqrt{d}L_n}(x)\right)\geq D_n+30\sqrt{d}L_n,$$ let $u(y,t)$ satisfy (\ref{review_eq}) with initial data $f(y)$.  Then, for each $\abs{y-x}\leq 30\sqrt{d}L_n$, $$\abs{u(y,L_n^2)}\leq \exp\left(-\frac{\d(\Supp(f),y)}{D_n}\right)\norm{f}_{L^\infty(\mathbb{R}^d)}.$$\end{prop}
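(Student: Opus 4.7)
The plan is to exploit the martingale (Feynman–Kac) representation of $u(y,L_n^2)$ together with Control \ref{localization}, so that the $L^\infty$ bound on $u$ at the point $y$ is reduced to bounding the probability that the diffusion $X_s$, started at $y$, reaches the support of the initial datum before time $L_n^2$.

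First I would write, in view of (\ref{i_mart}),
\[
u(y,L_n^2) \;=\; E_{y,\omega}\!\left[f(X_{L_n^2})\right],
\]
so that, since $f$ vanishes off $\Supp(f)$,
\[
|u(y,L_n^2)| \;\leq\; \|f\|_{L^\infty(\mathbb{R}^d)}\,P_{y,\omega}\!\left(X_{L_n^2}\in\Supp(f)\right).
\]
In order for the path to land in $\Supp(f)$ at time $L_n^2$ starting from $y$, it must travel a distance at least $\d(\Supp(f),y)$, hence
\[
P_{y,\omega}\!\left(X_{L_n^2}\in\Supp(f)\right) \;\leq\; P_{y,\omega}\!\left(X^*_{L_n^2} \geq \d(\Supp(f),y)\right).
\]

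Next, I would verify the hypotheses of Control \ref{localization} at the scale $v=\d(\Supp(f),y)$. Since $|y-x|\leq 30\sqrt{d}L_n$, the triangle inequality combined with the standing hypothesis
\[
\d(\Supp(f),B_{30\sqrt{d}L_n}(x)) \;\geq\; D_n + 30\sqrt{d}L_n
\]
yields $\d(\Supp(f),y) \geq D_n + 30\sqrt{d}L_n \geq D_n$, so Control \ref{localization} applies with $v=\d(\Supp(f),y)$. This gives
\[
P_{y,\omega}\!\left(X^*_{L_n^2} \geq \d(\Supp(f),y)\right) \;\leq\; \exp\!\left(-\frac{\d(\Supp(f),y)}{D_n}\right),
\]
and combining the two displays produces the claimed bound.

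There is no real obstacle here: the cushion of width $D_n+30\sqrt{d}L_n$ appearing in the assumption is chosen precisely so that every admissible starting point $y$ lies at distance at least $D_n$ from $\Supp(f)$, which is exactly the threshold at which Control \ref{localization} becomes effective. The role of the $30\sqrt{d}L_n$-buffer in the hypothesis is only to ensure Control \ref{localization} can be invoked uniformly for all $y$ in the ball around $x$, and the role of the additional $D_n$ is to guarantee the exponential estimate is nontrivial. The entire argument is a one-line application of the Feynman–Kac formula together with the tail bound.
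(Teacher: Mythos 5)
Your proposal is correct and follows exactly the paper's argument: the representation $u(y,L_n^2)=P_{y,\omega}(f(X_{L_n^2}))$, the bound by $\norm{f}_{L^\infty(\mathbb{R}^d)}P_{y,\omega}(X^*_{L_n^2}\geq \d(\Supp(f),y))$, and the application of Control \ref{localization} with $v=\d(\Supp(f),y)\geq D_n$, which is exactly what the hypothesis on $\d(\Supp(f),B_{30\sqrt{d}L_n}(x))$ guarantees.
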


\begin{proof}  The proof is immediate from the representation formula for the solution.  We have, for each $y\in\mathbb{R}^d$, $$u(y,L_n^2)=P_{y,\omega}\left(f(X_{L_n^2})\right).$$  Therefore, $$\abs{u(y,L_n^2)}\leq P_{y,\omega}\left(X_{L_n^2}^*\geq \d(\Supp(f),y)\right)\norm{f}_{L^\infty(\mathbb{R}^d)}.$$  Since $\d(\Supp(f), B_{30\sqrt{d}L_n}(x))\geq D_n+30\sqrt{d}L_n$, and since Control \ref{localization} is satisfied for the triple $(x,\omega,n)$, this implies that, for all $\abs{y-x}\leq 30\sqrt{d}L_n$, $$\abs{u(y,L_n^2)}\leq \exp\left(-\frac{\d(\Supp(f),y)}{D_n}\right)\norm{f}_{L^\infty(\mathbb{R}^d)},$$ which completes the argument.  \end{proof}

The following two elementary propositions will be used to extend Control \ref{Holder} to a larger portion of space.  The first is an elementary and well-known fact concerning the product of H\"older continuous functions.

\begin{prop}\label{prelim_product}  For each $n\geq 0$, for every $f,g\in\C^{0,\beta}(\mathbb{R}^d)$, $$\abs{fg}_n\leq \abs{f}_n\abs{g}_n.$$\end{prop}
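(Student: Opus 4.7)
The plan is to unpack the definition of $\abs{\cdot}_n$ from (\ref{levelholder}), which splits as a sup-norm piece plus $L_n^\beta$ times the standard $\beta$-Hölder seminorm $[\cdot]_\beta$, and then verify both pieces submultiplicatively.

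First I would handle the sup-norm part: for all $x\in\mathbb{R}^d$, $|f(x)g(x)|\leq \|f\|_\infty\|g\|_\infty$, so $\|fg\|_\infty\leq\|f\|_\infty\|g\|_\infty$. Next I would treat the Hölder seminorm with the standard add-and-subtract trick, writing
$$f(x)g(x)-f(y)g(y)=f(x)\bigl(g(x)-g(y)\bigr)+g(y)\bigl(f(x)-f(y)\bigr),$$
so that dividing by $|x-y|^\beta$ and taking the supremum over $x\neq y$ yields $[fg]_\beta\leq \|f\|_\infty[g]_\beta+\|g\|_\infty[f]_\beta$.

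Combining these two bounds gives
$$\abs{fg}_n\leq \|f\|_\infty\|g\|_\infty+L_n^\beta\|f\|_\infty[g]_\beta+L_n^\beta\|g\|_\infty[f]_\beta.$$
On the other hand, expanding the product $\abs{f}_n\abs{g}_n=(\|f\|_\infty+L_n^\beta[f]_\beta)(\|g\|_\infty+L_n^\beta[g]_\beta)$ produces exactly the same three terms plus the extra nonnegative remainder $L_n^{2\beta}[f]_\beta[g]_\beta\geq 0$, which gives the desired inequality.

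There is no genuine obstacle here; the only thing to be a little careful about is that the Hölder seminorm in (\ref{levelholder}) is weighted by $L_n^\beta$ but not squared, so the cross terms match cleanly while the $L_n^{2\beta}$ term is the slack that keeps the inequality nonstrict. The proof therefore amounts to a direct expansion together with the product-rule estimate for $[\cdot]_\beta$.
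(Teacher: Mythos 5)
Your proof is correct and follows essentially the same route as the paper: the product rule decomposition $f(x)g(x)-f(y)g(y)=f(x)(g(x)-g(y))+g(y)(f(x)-f(y))$ together with the submultiplicativity of the sup-norm, and the observation that the expansion of $\abs{f}_n\abs{g}_n$ dominates the resulting three terms. No issues.
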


\begin{proof}  Fix $n\geq 0$ and $f,g\in\C^{0,\beta}(\mathbb{R}^d)$.  For every $x,y\in\mathbb{R}^d$, the triangle inequality implies $$\abs{f(x)g(x)-f(y)g(y)}\leq \abs{f(x)}\abs{g(x)-g(y)}+\abs{g(y)}\abs{f(x)-f(y)}.$$  Therefore, $$\sup_{x\neq y}L_n^\beta\frac{\abs{f(x)g(x)-f(y)g(y)}}{{\abs{x-y}^\beta}}\leq \norm{f}_{L^\infty(\mathbb{R}^d)}\sup_{x\neq y}L_n^\beta\frac{\abs{g(x)-g(y)}}{\abs{x-y}^\beta}+\norm{g}_{L^\infty(\mathbb{R}^d)}\sup_{x\neq y}L_n^\beta\frac{\abs{f(x)-f(y)}}{\abs{x-y}^\beta}.$$  And, since $$\norm{fg}_{L^\infty(\mathbb{R}^d)}\leq \norm{f}_{L^\infty(\mathbb{R}^d)}\norm{g}_{L^\infty(\mathbb{R}^d)},$$ we conclude that $$\abs{fg}_n\leq\norm{f}_{L^\infty(\mathbb{R}^d)}\abs{g}_n+\norm{g}_{L^\infty(\mathbb{R}^d)}\sup_{x\neq y}L_n^\beta\frac{\abs{f(x)-f(y)}}{\abs{x-y}^\beta}\leq \abs{f}_n\abs{g}_n,$$ which completes the argument.  \end{proof}

The final proposition will play the most important role in extending Control \ref{Holder}.  The only observation is that the H\"older norms introduced in (\ref{levelholder}) occur at the length scale $L_n$.  Therefore, a function agreeing locally with H\"older continuous functions on scale $L_n$ must itself be globally H\"older continuous.

\begin{prop}\label{prelim_extension}  Let $I$ be an arbitrary index and $n\geq 0$.  If $f:\mathbb{R}^d\rightarrow\mathbb{R}$ and $\left\{g_i:\mathbb{R}^d\rightarrow\mathbb{R}\right\}_{i\in I}$ are such that, for a collection $\left\{x_i\right\}_{i\in I}\subset \mathbb{R}^d$,\begin{equation}\label{Holder2}f=g_i\;\;\textrm{on}\;\;B(x_i, 20\sqrt{d}L_n) \;\;\textrm{and}\;\; \Supp(f)\subset \bigcup_{i\in I}B(x_i,10\sqrt{d}L_n),\end{equation} then $$\abs{f}_n\leq 3\sup_{i\in I}\abs{g_i}_n.$$\end{prop}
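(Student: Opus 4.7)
The plan is to split the estimate for $|f|_n$ into its two constituent pieces, the sup norm and the Hölder seminorm, and estimate each using the covering hypothesis. Throughout, set $K = \sup_{i \in I}|g_i|_n$ and note that each $g_i$ satisfies both $\|g_i\|_{L^\infty(\Rd)}\le K$ and $L_n^\beta |g_i(x)-g_i(y)|/|x-y|^\beta\le K$ for all $x\neq y$.

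First I would handle the sup norm. Any $x\in \Supp(f)$ lies in some $B(x_i,10\sqrt{d}L_n)\subset B(x_i,20\sqrt{d}L_n)$, so $f(x)=g_i(x)$ and $|f(x)|\le K$; off the support, $f(x)=0$. This gives $\sup_{x\in\Rd}|f(x)|\le K$.

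Next I would estimate the Hölder seminorm by a case analysis on the pair $(x,y)$. If $\{x,y\}\cap \Supp(f)=\emptyset$, both values vanish and there is nothing to check. Otherwise assume without loss of generality $x\in \Supp(f)$, so $x\in B(x_i,10\sqrt{d}L_n)$ for some $i$. The natural threshold is $|x-y|\le 10\sqrt{d}L_n$: in this regime $y\in B(x_i,20\sqrt{d}L_n)$, so both $f(x)=g_i(x)$ and $f(y)=g_i(y)$ (this holds automatically even when $y\notin \Supp(f)$, since then $g_i(y)=f(y)=0$), and the Hölder estimate for $g_i$ gives
\[
L_n^\beta \frac{|f(x)-f(y)|}{|x-y|^\beta} = L_n^\beta \frac{|g_i(x)-g_i(y)|}{|x-y|^\beta} \le K.
\]
In the complementary regime $|x-y|>10\sqrt{d}L_n$, I would bound the numerator crudely by $|f(x)|+|f(y)|\le 2K$, while the denominator satisfies $|x-y|^\beta \ge (10\sqrt{d}L_n)^\beta\ge L_n^\beta$. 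Hence
\[
L_n^\beta \frac{|f(x)-f(y)|}{|x-y|^\beta} \le 2K.
\]
Taking the supremum over $(x,y)$ and adding the sup norm bound yields $|f|_n\le K+2K=3K$, which is the claim.

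There is no real obstacle here; the only point requiring care is that the two radii $10\sqrt{d}L_n$ and $20\sqrt{d}L_n$ in \eqref{Holder2} are precisely matched to the threshold used in the case split, so that any pair with $|x-y|\le 10\sqrt{d}L_n$ and $x\in\Supp(f)$ sits inside a single ball on which $f$ coincides with some $g_i$. This is what allows the local Hölder control to pass to a global one without any worse constant than $3$.
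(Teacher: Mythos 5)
Your proof is correct and follows essentially the same route as the paper's: bound the sup norm by $K$, then split the H\"older seminorm into near and far pairs, using agreement with a single $g_i$ on a ball $B(x_i,20\sqrt{d}L_n)$ for near pairs and the crude bound $|f(x)|+|f(y)|\le 2K$ together with $|x-y|^\beta\ge L_n^\beta$ for far pairs. The only difference is the cutoff ($10\sqrt{d}L_n$ rather than the paper's $L_n$), which is immaterial, and both arguments land on the same $K+2K=3K$.
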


\begin{proof}  In view of (\ref{Holder2}), for each $x\in\mathbb{R}^d$ there exists $j\in I$ such that $f(x)=g_j(x)$.  Therefore, \begin{equation}\label{Holder3} \abs{f(x)}=\abs{g_j(x)}\leq \sup_{i\in I}\abs{g_i}_n.\end{equation}  It remains to bound the H\"{o}lder semi-norm.

If $x,y\in\mathbb{R}^d$ satisfy $\abs{x-y}\geq L_n$, in view of (\ref{Holder2}), for $j,k\in I$ satisfying $f(x)=g_j(x)$ and $f(y)\leq g_k(y)$, \begin{equation}\label{Holder4} L_n^\beta\frac{\abs{f(x)-f(y)}}{\abs{x-y}^\beta}\leq \abs{g_j(x)-g_k(y)}\leq 2\sup_{i\in I}\abs{g_i}_n.\end{equation}  If $\abs{x-y}<L_n$, in view of (\ref{Holder2}), there exists $j\in I$ such that $x,y\in B(x_j,20\sqrt{d}L_n).$  Therefore, for this $j\in I$, \begin{equation}\label{Holder5} L_n^\beta\frac{\abs{f(x)-f(y)}}{\abs{x-y}^\beta}=L_n^\beta\frac{\abs{g_j(x)-g_j(y)}}{\abs{x-y}^\beta}\leq \abs{g_j}_n\leq\sup_{i\in I}\abs{g_i}_n.\end{equation}  The claim follows by combining (\ref{Holder3}), (\ref{Holder4}) and (\ref{Holder5}).  \end{proof}

\subsection{The Existence of an Invariant Measure}

We observe that, by using the transformation group (\ref{transgroup}), the ensemble of diffusion processes define dynamics on the probability space $\Omega$.  That is, for each path $X_t\in\C([0,\infty);\mathbb{R}^d)$ and $\omega\in\Omega$, we can consider the trajectory $$t\rightarrow\tau_{X_t}\omega.$$  And, for each $\tilde{f}\in L^\infty(\Omega)$, we can consider, for each $t\geq 0$, the map $f_t:\Omega\times\C([0,\infty);\mathbb{R}^d)\rightarrow\mathbb{R}$ defined, for each $\omega\in\Omega$ and $X_s\in\C([0,\infty);\mathbb{R}^d)$, by $$f_t(\omega,X_s)=\tilde{f}(\tau_{X_t}\omega).$$

We say that a probability measure $\pi$ on $\Omega$ is invariant if, for each $\tilde{f}\in L^\infty(\mathbb{R}^d)$, the law of the family $\left\{f_t\right\}_{t\geq 0}$ is constant in time with respect to the annealed, semi-direct product measure $\pi\ltimes P_{0,\omega}$.  This is equivalent to the statement that, for each measurable subset $E\in\mathcal{F}$, for each $t\geq 0$, \begin{equation}\label{p_invariant_1}\int_{\Omega} P_{0,\omega}\left(\tau_{X_t}\omega\in E\right)\;d\pi=\pi(E).\end{equation} And, in particular, if $\pi$ is an invariant measure then, for each $t\geq 0$, for each $f\in L^\infty(\Omega)$, the corresponding annealed expectations satisfy $$\int_{\Omega} P_{0,\omega}f(\tau_{X_t}\omega)\;d\pi=\int_{\Omega}f(\omega)\;d\pi.$$  In \cite{F1}, under assumptions identical to those of this paper, it was shown that there exists a unique probability measure on $(\Omega,\mathcal{F})$ satisfying (\ref{p_invariant_1}) which is mutually absolutely continuous with respect to $\mathbb{P}$.

\begin{thm}\label{p_invariant}  Assume (\ref{steady}) and (\ref{constants}).  There exists a unique probability measure $\pi$ on $(\Omega,\mathcal{F})$ satisfying (\ref{p_invariant_1}) and which is mutually absolutely continuous with respect to $\mathbb{P}$.  Furthermore, $\pi$ defines an ergodic probability measure with respect to the canonical Markov process on $\Omega$ defining (\ref{p_invariant_1}).\end{thm}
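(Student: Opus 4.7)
The plan is to construct $\pi$ as a Cesàro limit of the forward semigroup applied to the background measure $\mathbb{P}$, and then use the quantitative approximation of the quenched dynamics by Brownian motion with diffusivity $\overline{\alpha}$ to verify the required properties. For each $t\geq 0$, let $P_t^*\mathbb{P}$ denote the measure on $(\Omega,\mathcal{F})$ defined by
\[
(P_t^*\mathbb{P})(E)=\int_\Omega P_{0,\omega}\left(\tau_{X_t}\omega\in E\right)d\mathbb{P}(\omega),
\]
and define a candidate measure via
\[
\pi=\lim_{T\to\infty}\frac{1}{T}\int_0^T P_t^*\mathbb{P}\, dt,
\]
where convergence is tested against bounded measurable functions $\tilde{f}\in L^\infty(\Omega)$ that are local in the sense of depending on $A,b$ only in a bounded region. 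The invariance property (\ref{p_invariant_1}) for any subsequential weak limit is immediate from Cesàro averaging and the semigroup property, so the real work is to show the limit exists, is a probability measure, and is mutually absolutely continuous with $\mathbb{P}$.

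To prove convergence of the time averages, I would reduce, via stationarity (\ref{stationary}), to estimating $\mathbb{E}\left[P_{0,\omega}\tilde{f}(\tau_{X_t}\omega)\right]$ by comparing the quenched law of $X_t$ to the centered Gaussian of variance $\overline{\alpha}t$ on the increasing dyadic scales $L_n$. On the high-probability event $\mathcal{B}_n(0)$ of Theorem \ref{induction}, Control \ref{Holder} and Control \ref{localization} together with Proposition \ref{local11} give that the quenched heat kernel $p_{L_n^2,\omega}(0,\cdot)$ agrees up to error $L_n^{-\delta}$ with the deterministic Gaussian kernel $\overline{p}_{L_n^2}$ in the relevant H\"older norm, truncated at scale $D_n=L_n\kappa_n$. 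Averaging then collapses $\mathbb{E}[\tilde{f}(\tau_{X_t}\omega)]$ essentially to $\mathbb{E}[\tilde{f}]$ plus an error controlled by $L_n^{-\delta}$ and $\mathbb{P}(\Omega\setminus\mathcal{B}_n(0))\leq L_n^{-M_0}$, both summable in $n$ because $M_0$ and $\delta$ are chosen larger than $a$. This both proves the limit exists and shows that $\pi$ is close to $\mathbb{P}$ when integrated against local $\tilde{f}$.

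For mutual absolute continuity, I would observe that $P_t^*\mathbb{P}$ has a density with respect to $\mathbb{P}$ that on the good event is controlled by two-sided Gaussian bounds for $p_{t,\omega}(0,y)$ (available uniformly from the ellipticity (\ref{elliptic}) and the H\"older comparison on $\mathcal{B}_n(0)$). Passing to the Cesàro limit and using the summability above, one obtains that $d\pi/d\mathbb{P}$ exists and is bounded away from $0$ and $\infty$ on a set of full $\mathbb{P}$-measure. Uniqueness then follows in the standard way: if $\pi'$ is another invariant measure mutually absolutely continuous with $\mathbb{P}$, then $h=d\pi'/d\pi$ is an invariant function under the Markov semigroup on $\Omega$; by the coupling $t\mapsto\tau_{X_t}\omega$ together with the homogenization to Brownian motion in $d\geq 3$, such invariant functions are constant $\pi$-almost surely, so $\pi=\pi'$. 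Ergodicity of $\pi$ under the Markov process is the corresponding statement applied to indicators of invariant sets: any such set pulls back through the diffusive $X_t$ to a spatially invariant event under $\mathbb{P}$, which is trivial by ergodicity of $\{\tau_x\}$.

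The main obstacle is the second step: obtaining the Radon--Nikodym density of $\pi$ with respect to $\mathbb{P}$ as a genuine bounded, strictly positive function. Control \ref{Holder} provides sharp information only on a moving window of scale $L_n$ and only on the event $\mathcal{B}_n(0)$, so the challenge is to propagate local two-sided heat-kernel estimates to a global bound on the density through the Cesàro average, and to absorb the contribution of the complementary event $\Omega\setminus\mathcal{B}_n(0)$ without loss, exploiting the polynomial decay from Theorem \ref{induction} against the summable errors from Controls \ref{Holder} and \ref{localization}.
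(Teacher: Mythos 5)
First, a bookkeeping point: the paper does not prove Theorem \ref{p_invariant} at all. It is imported wholesale from \cite{F1} (``In \cite{F1}, under assumptions identical to those of this paper, it was shown that\ldots''), so there is no in-paper proof to compare against; the only visible trace of the underlying machinery is Proposition \ref{a_main} in Section 6, the comparison $R_{n+1}\approx \overline{R}_n^{\ell_n^2-6}R_n^6$, which is the engine of the construction in \cite{F1}. Your outer skeleton (Ces\`aro averages of $P_t^*\mathbb{P}$, invariance of weak limit points, uniqueness and ergodicity via the Kozlov-type argument once mutual absolute continuity is in hand) is the standard and correct frame for this kind of theorem, and your last two paragraphs of the uniqueness/ergodicity discussion are essentially fine in outline.

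The genuine gap is in the two quantitative steps, and the first of them is not merely incomplete but would prove something false. You claim that the homogenization controls ``collapse'' $\mathbb{E}\left[P_{0,\omega}\tilde{f}(\tau_{X_t}\omega)\right]$ to $\mathbb{E}[\tilde{f}]$ up to errors summable in $n$; if that were true, the Ces\`aro limit would be $\pi=\mathbb{P}$, which is false for these non-divergence-form generators (the environment seen from the particle is genuinely biased, and the whole content of the theorem is that $\pi\neq\mathbb{P}$ but $\pi\sim\mathbb{P}$). The error is in applying Control \ref{Holder} to the initial datum $f(y)=\tilde{f}(\tau_y\omega)$: this function oscillates on scale $O(1)$, so its rescaled norm satisfies $\abs{f}_n\sim L_n^\beta$, and the contraction bound $L_n^{-\delta}\abs{f}_n\sim L_n^{\beta-\delta}$ diverges since $\delta=\tfrac{5}{32}\beta<\beta$. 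Control \ref{Holder} governs functionals of the particle's \emph{position} at scale $L_n$, not functionals of the environment at the particle, and the replacement of $p_{t,\omega}(0,\cdot)$ by the Gaussian cannot be made against a test field that is correlated with the kernel itself; the uncontrolled cross term is exactly $\mathbb{E}_\pi(\tilde f)-\mathbb{E}(\tilde f)$. The second step, mutual absolute continuity with a density bounded away from $0$ and $\infty$, you correctly identify as the main obstacle but do not resolve; Controls \ref{Holder} and \ref{localization} give no pointwise lower bound on $p_{t,\omega}$ at diffusive scales, and this is precisely where \cite{F1} does the real work, propagating the regularization $R_{n+1}\approx\overline{R}_n^{\ell_n^2-6}R_n^6$ to show that the densities of the time-shifted measures converge in $L^1(\Omega,\mathbb{P})$ to a strictly positive limit. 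As it stands the proposal establishes neither the convergence of the averages nor the absolute continuity, and the mechanism offered for the former contradicts the statement being proved.
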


Henceforth, for every $f\in L^\infty(\mathbb{R}^d)$, we will write $\mathbb{E}_\pi(f)$ to denote the expectation of $f$ with respect to the measure $\pi$.  Precisely, for each $f\in L^\infty(\mathbb{R}^d)$, \begin{equation}\label{p_mea}\mathbb{E}_\pi(f)=\int_{\Omega}f\;d\pi.\end{equation}  Furthermore, observe that since $\pi$ is mutually absolutely continuous with respect to $\mathbb{P}$, there is no ambiguity with respect to subsets of full measure.

\section{The Almost Sure Control of Diffusivity}

The purpose of this section is to control, on a subset of full probability, the limiting diffusivity of the environment.  Precisely, we require to show that $$\limsup_{t\rightarrow\infty}\frac{1}{td}P_{0,\omega}\left(\abs{X_t}^2\right)<\infty,$$  and, indeed, by using Theorem \ref{effectivediffusivity} and Controls \ref{Holder} and \ref{localization}, we prove that, on a subset of full probability, \begin{equation}\label{d_statement}\lim_{t\rightarrow\infty}\frac{1}{td}P_{0,\omega}\left(\abs{X_t}^2\right)=\overline{\alpha}.\end{equation}

Before proceeding with the argument, we introduce some useful notation.  For each $n\geq 0$, for $C>0$ independent of $n$, let $\tilde{\chi}_n:\mathbb{R}^d\rightarrow\mathbb{R}$ denote a smooth cutoff function satisfying $0\leq \tilde{\chi}_n\leq 1$ with \begin{equation}\label{d_cutoff} \tilde{\chi}_n=1\;\;\textrm{on}\;\;\overline{B}_{3\tilde{D}_n},\;\;\tilde{\chi}_n=0\;\;\textrm{on}\;\;\mathbb{R}^d\setminus B_{4\tilde{D}_n},\;\;\norm{D\tilde{\chi}_n}_{L^\infty(\mathbb{R}^d)}\leq C/\tilde{D}_n\;\;\textrm{and}\;\;\norm{D^2\tilde{\chi}_n}_{L^\infty(\mathbb{R}^d)}\leq C/\tilde{D}_n^2.\end{equation}  Let $q:\mathbb{R}^d\rightarrow\mathbb{R}$ be defined by $q(x)=\abs{x}^2$ and define, for each $n\geq 0$, \begin{equation}\label{d_quadratic} q_n(x)=\tilde{\chi}_n(x)q(x)\;\;\textrm{on}\;\;\mathbb{R}^d.\end{equation}

In view of (\ref{i_mart}), we are interested in controlling, on a subset of full probability, the growth of the solution $u:\mathbb{R}^d\times[0,\infty)\times\Omega\rightarrow\mathbb{R}$ to the equation \begin{equation}\label{d_eq} \left\{\begin{array}{ll} u_t-\frac{1}{2}\tr(A(y,\omega)D^2u)+b(y,\omega)\cdot Du=0 & \textrm{on}\;\;\mathbb{R}^d\times(0,\infty), \\ u=q & \textrm{on}\;\;\mathbb{R}^d\times\left\{0\right\}.\end{array}\right.\end{equation}  However, because the initial data $q(x)=\abs{x}^2$ has unbounded H\"older norm, in order to effectively apply Control \ref{Holder} we will approximate the initial data using the functions $\left\{q_n\right\}_{n\geq 0}$ and bound the corresponding error using Control \ref{localization}.

Therefore, for each integer $n\geq 0$ and real number $s\geq 0$, for $f:\mathbb{R}^d\rightarrow\mathbb{R}$ measurable satisfying, for instance, quadratic growth, we define \begin{equation}\label{d_op} R_nf(x,\omega)=u(x,L_n^2,\omega)\;\;\textrm{and}\;\;R_sf(x,\omega)=u(x,s,\omega),\end{equation} for $u(x,t,\omega)$ the solution of (\ref{d_eq}) corresponding to initial data $f$ and $\omega\in\Omega$.  Furthermore, for each integer $n\geq 0$, we define $$\overline{R}_nf(x)=\overline{u}(x,L_n^2),$$ for $\overline{u}:\mathbb{R}^d\times[0,\infty)\rightarrow\mathbb{R}$ satisfying \begin{equation}\label{d_heq} \left\{\begin{array}{ll} \overline{u}_t-\frac{\alpha_n}{2}\Delta \overline{u}=0 & \textrm{on}\;\;\mathbb{R}^d\times(0,\infty), \\ \overline{u}=f & \textrm{on}\;\;\mathbb{R}^d\times\left\{0\right\}.\end{array}\right.\end{equation}  For each real number $s\geq 0$, define $$\overline{R}_{n,s}f(x)=\overline{u}(x,s),$$ for $\overline{u}:\mathbb{R}^d\times[0,\infty)\rightarrow\mathbb{R}^d$ satisfying \begin{equation}\label{d_hheq}\left\{\begin{array}{ll} \overline{u}_t-\frac{\alpha_n}{2}\Delta \overline{u}=0 & \textrm{on}\;\;\mathbb{R}^d\times(0,\infty), \\ \overline{u}=f & \textrm{on}\;\;\mathbb{R}^d\times\left\{0\right\}.\end{array}\right.\end{equation}  Finally, for each integer $n\geq 0$, define the difference operator $$S_nf(x,\omega)=R_nf(x,\omega)-\overline{R}_nf(x).$$  Observe that Control \ref{Holder} maybe restated in terms of the operators $S_n$, recalling the cutoff function $\chi_{n,x}$ from (\ref{cutoff1}).

\begin{con}\label{d_Holdercontrol}  Fix $x\in\mathbb{R}^d$, $\omega\in\Omega$ and $n\geq 0$.  For each $f\in\C^{0,\beta}(\mathbb{R}^d)$, $$\abs{\chi_{n,x}S_nf}_n\leq L_n^{-\delta}\abs{f}_n.$$\end{con}

The following proposition describes an elementary and well-known fact about the interaction between the kernels $\overline{R}_n$ and the rescaled H\"older norms appearing in (\ref{levelholder}).

\begin{prop}\label{d_contract}  Assume (\ref{steady}) and (\ref{constants}).  For each $n\geq 0$ and $f\in\C^{0,\beta}(\mathbb{R}^d)$, $$\abs{\overline{R}_nf}_n\leq\abs{f}_n.$$\end{prop}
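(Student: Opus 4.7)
The plan is to exploit the fact that $\overline{R}_n$ is nothing but convolution with a probability density (the Gaussian heat kernel of diffusivity $\alpha_n$ at time $L_n^2$), and therefore contracts every translation-invariant norm of the type appearing in $|\cdot|_n$. Concretely, I would first write
\[
\overline{R}_n f(x) = \int_{\mathbb{R}^d} G_n(x-y)\,f(y)\,dy,\qquad G_n(z)=(2\pi\alpha_n L_n^2)^{-d/2}\exp\!\left(-\frac{|z|^2}{2\alpha_n L_n^2}\right),
\]
which is the standard representation of the solution to (\ref{d_hheq}). Since $G_n\geq 0$ and $\int G_n = 1$, the $L^\infty$ bound is immediate:
\[
\sup_{x\in\mathbb{R}^d}|\overline{R}_n f(x)|\leq \|f\|_{L^\infty(\mathbb{R}^d)}\int_{\mathbb{R}^d} G_n(z)\,dz = \|f\|_{L^\infty(\mathbb{R}^d)}.
\]

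For the H\"older part, I would use the translation-invariance of the kernel. For any $x\neq y$ in $\mathbb{R}^d$,
\[
\overline{R}_n f(x)-\overline{R}_n f(y) = \int_{\mathbb{R}^d} G_n(z)\bigl(f(x-z)-f(y-z)\bigr)\,dz,
\]
so that, by the triangle inequality and the definition of the H\"older semi-norm of $f$,
\[
|\overline{R}_n f(x)-\overline{R}_n f(y)| \leq |x-y|^\beta \sup_{u\neq v}\frac{|f(u)-f(v)|}{|u-v|^\beta}\int_{\mathbb{R}^d} G_n(z)\,dz.
\]
Multiplying by $L_n^\beta/|x-y|^\beta$ and taking the supremum over $x\neq y$ yields exactly the H\"older term in $|f|_n$.

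Adding the two pieces gives $|\overline{R}_n f|_n \leq |f|_n$. There is no real obstacle here: the statement is a routine consequence of the convolution structure of the heat semigroup and the translation-invariance of the rescaled H\"older norm (\ref{levelholder}), and the argument applies uniformly in $n$ since the only $n$-dependent quantities --- $\alpha_n$ and $L_n$ --- enter the kernel $G_n$ in a manner irrelevant to the contraction property, as $G_n$ remains a probability density.
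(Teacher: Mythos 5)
Your proof is correct and is essentially the paper's own argument: both write $\overline{R}_n$ as convolution with the Gaussian probability density and use positivity, unit mass, and translation invariance to contract the sup norm and the H\"older semi-norm separately. (The only difference is a cosmetic one in the normalization of the Gaussian, which is irrelevant since all that matters is that the kernel integrates to one.)
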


\begin{proof}  Fix $n\geq 0$ and $f\in\C^{0,\beta}(\mathbb{R}^d)$.  For each $x\in\mathbb{R}^d$, $$\overline{R}_nf(x)=\int_{\mathbb{R}^d}(4\pi \alpha_n L_n^2)^{-d/2}e^{-\abs{y}^2/4\alpha_n L_n^2}f(y+x)\;dy.$$  Therefore, \begin{equation}\label{0_contract_1}\norm{\overline{R}_nf}_{L^\infty(\mathbb{R}^d)}\leq\norm{f}_{L^\infty(\mathbb{R}^d)}.\end{equation}  It remains to bound the H\"older semi-norm.

Fix elements $y\neq z$ of $\mathbb{R}^d$.  Then, $$\abs{\overline{R}_nf(y)-\overline{R}_nf(z)}=\left|\int_{\mathbb{R}^d}(4\pi \alpha_n L_n^2)^{-d/2}e^{-\abs{y}^2/4\alpha_n L_n^2}\left(f(y+x)-f(z+x)\right)\;dy\right|,$$ and, therefore, \begin{equation}\label{0_contract_2}\sup_{y\neq z}\frac{\abs{\overline{R}_nf(y)-\overline{R}_nf(z)}}{\abs{y-z}^\beta}\leq \sup_{y\neq z}\frac{\abs{f(y)-f(z)}}{\abs{y-z}^\beta}.\end{equation}  The claim follows from (\ref{0_contract_1}) and (\ref{0_contract_2}).  \end{proof}

We describe the localization properties of the kernels $\overline{R}_n$ as well.  Notice the role of Theorem \ref{effectivediffusivity}.

\begin{prop}\label{d_hlocalize} Assume (\ref{steady}) and (\ref{constants}).  For each $n\geq 0$ and $0\leq t\leq L_{n+1}^2$, for every $f\in L^\infty(\mathbb{R}^d)$, for $C>0$ independent of $n$, $$\sup_{x\in B_{2\tilde{D}_{n+1}}}\abs{\overline{R}_{n,t}(1-\tilde{\chi}_{n+1})f(x,\omega)}\leq C\norm{f}_{L^\infty(\mathbb{R}^d)}e^{-\tilde{\kappa}_{n+1}}.$$\end{prop}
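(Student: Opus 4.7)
The plan is to write $\overline{R}_{n,t}$ explicitly as convolution with the Gaussian kernel of variance $\alpha_n t$ and then extract the claimed bound from a standard Gaussian tail estimate, using Theorem~\ref{effectivediffusivity} to control $\alpha_n$ from above by $2\nu$ uniformly in $n$.

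First I would recall, from the representation formula for (\ref{d_hheq}), that for every $x\in\mathbb{R}^d$,
\[
\overline{R}_{n,t}(1-\tilde{\chi}_{n+1})f(x)=\int_{\mathbb{R}^d}(4\pi\alpha_n t)^{-d/2}e^{-|x-y|^2/4\alpha_n t}(1-\tilde{\chi}_{n+1}(y))f(y)\,dy.
\]
By (\ref{d_cutoff}), $(1-\tilde{\chi}_{n+1})$ vanishes on $\overline{B}_{3\tilde{D}_{n+1}}$, so the integrand is supported in $\{|y|\geq 3\tilde{D}_{n+1}\}$. For $x\in B_{2\tilde{D}_{n+1}}$, the triangle inequality forces $|x-y|\geq \tilde{D}_{n+1}$ on the support, and hence
\[
\bigl|\overline{R}_{n,t}(1-\tilde{\chi}_{n+1})f(x)\bigr|\leq \|f\|_{L^\infty(\mathbb{R}^d)}\int_{|x-y|\geq \tilde{D}_{n+1}}(4\pi\alpha_n t)^{-d/2}e^{-|x-y|^2/4\alpha_n t}\,dy.
\]

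Next I would change variables $z=(y-x)/\sqrt{2\alpha_n t}$ to rewrite this as a standard Gaussian tail
\[
\|f\|_{L^\infty(\mathbb{R}^d)}\int_{|z|\geq r_{n,t}}(2\pi)^{-d/2}e^{-|z|^2/2}\,dz,\qquad r_{n,t}:=\frac{\tilde{D}_{n+1}}{\sqrt{2\alpha_n t}}.
\]
Using $\alpha_n\leq 2\nu$ from Theorem~\ref{effectivediffusivity} and the hypothesis $t\leq L_{n+1}^2$, together with $\tilde{D}_{n+1}=L_{n+1}\tilde{\kappa}_{n+1}$, I get $r_{n,t}\geq \tilde{\kappa}_{n+1}/(2\sqrt{\nu})$. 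The classical Gaussian tail bound then provides a dimensional constant $C>0$ such that
\[
\int_{|z|\geq r}(2\pi)^{-d/2}e^{-|z|^2/2}\,dz\leq Ce^{-r^2/2}\qquad\text{for all }r\geq 0,
\]
which applied at $r=r_{n,t}$ yields a bound of order $\exp(-\tilde{\kappa}_{n+1}^2/(8\nu))$.

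Finally I would absorb the super-exponential decay: since $\tilde{\kappa}_{n+1}\to\infty$ as $n\to\infty$ by (\ref{kappa}), we have $\tilde{\kappa}_{n+1}^2/(8\nu)\geq \tilde{\kappa}_{n+1}$ for all but finitely many $n$, and for the remaining finitely many indices the desired inequality holds after enlarging $C$. This gives the stated estimate with a single constant $C>0$ independent of $n$. There is no real obstacle here; this proposition is essentially a Gaussian tail calculation, the only point to be careful about is feeding in the uniform bound $\alpha_n\leq 2\nu$ from Theorem~\ref{effectivediffusivity} so that $C$ does not depend on $n$.
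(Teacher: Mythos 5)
Your argument is correct and essentially identical to the paper's proof: both reduce to the Gaussian tail of the kernel $\overline{R}_{n,t}$ over $\abs{x-y}\geq\tilde{D}_{n+1}$ (using that $1-\tilde{\chi}_{n+1}$ is supported outside $\overline{B}_{3\tilde{D}_{n+1}}$ while $x\in B_{2\tilde{D}_{n+1}}$), invoke $\alpha_n\leq 2\nu$ from Theorem \ref{effectivediffusivity} together with $t\leq L_{n+1}^2$ to obtain a rescaled radius of order $\tilde{\kappa}_{n+1}$, and then absorb the super-exponential decay into $e^{-\tilde{\kappa}_{n+1}}$. One small slip: for $d\geq 3$ the tail $\int_{\abs{z}\geq r}(2\pi)^{-d/2}e^{-\abs{z}^2/2}\,dz$ is of order $r^{d-2}e^{-r^2/2}$ for large $r$, so it is not bounded by $Ce^{-r^2/2}$ with $C$ independent of $r$; you should either retain the polynomial factor (as the paper does, writing $\tilde{\kappa}_{n+1}^{d-2}e^{-c\tilde{\kappa}_{n+1}^2}$) or weaken the exponent to $e^{-r^2/4}$, and in either case your final absorption step goes through unchanged.
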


\begin{proof}  Fix $n\geq 0$ and $0\leq t\leq L_{n+1}^2$.  Then, for each $f\in L^\infty(\mathbb{R}^d)$, for each $x\in B_{2\tilde{D}_{n+1}}$, $$\abs{\overline{R}_{n,t}(1-\tilde{\chi}_{n+1})f(x,\omega)}\leq \norm{f}_{L^\infty(\mathbb{R}^d)}\int_{\mathbb{R}^d\setminus B_{\tilde{D}_{n+1}}(x)}(4\pi\alpha_n t)^{-d/2}e^{-\abs{y-x}^2/4\alpha_n t}\;dy.$$  Therefore, since $0\leq t\leq L_{n+1}^2$, Theorem \ref{effectivediffusivity} implies that, for $C>0$ and $c>0$ independent of $n$, for each $f\in L^\infty(\mathbb{R}^d)$ and $x\in B_{2\tilde{D}_{n+1}}$, \begin{multline*}\abs{\overline{R}_{n,t}(1-\tilde{\chi}_{n+1})f(x,\omega)}\leq \norm{f}_{L^\infty(\mathbb{R}^d)}\int_{\mathbb{R}^d\setminus B_{c\tilde{\kappa}_{n+1}}}e^{-\abs{y}^2}\;dy \\ \leq C\norm{f}_{L^\infty(\mathbb{R}^d)}\tilde{\kappa}_{n+1}^{d-2}e^{-c\tilde{\kappa}_{n+1}^2}\leq C\norm{f}_{L^\infty(\mathbb{R}^d)}e^{-\tilde{\kappa}_{n+1}},\end{multline*} which, since $n\geq 0$ and $0\leq t\leq L_{n+1}^2$ were arbitrary, completes the argument.  \end{proof}

The following argument is virtually identical to Proposition \ref{d_hlocalize} and accounts for quadratic initial data.  Again, notice the role of Theorem \ref{effectivediffusivity}.

\begin{prop}\label{d_hquadratic}  Assume (\ref{steady}) and (\ref{constants}).  For each $n\geq 0$ and $0\leq t\leq L_{n+1}^2$, for $C>0$ independent of $n$, $$\abs{\overline{R}_{n,t}(1-\tilde{\chi}_{n+1})q(0)}\leq Ce^{-\tilde{\kappa}_{n+1}}.$$\end{prop}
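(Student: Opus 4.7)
The plan is to argue exactly as in Proposition~\ref{d_hlocalize}, with the only new feature being the need to absorb the $|y|^2$ growth of the initial data into the Gaussian tail. First, I would write out $\overline{R}_{n,t}[(1-\tilde{\chi}_{n+1})q](0)$ using the heat kernel representation for (\ref{d_hheq}),
\begin{equation*}
\overline{R}_{n,t}[(1-\tilde{\chi}_{n+1})q](0)=\int_{\mathbb{R}^d}(4\pi\alpha_n t)^{-d/2}e^{-|y|^2/4\alpha_n t}(1-\tilde{\chi}_{n+1}(y))|y|^2\,dy,
\end{equation*}
and observe that, by (\ref{d_cutoff}), the integrand is supported in $\{|y|\geq 3\tilde{D}_{n+1}\}$, where $\tilde{D}_{n+1}=L_{n+1}\tilde{\kappa}_{n+1}$.

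Next, using the bound $\alpha_n\leq 2\nu$ from Theorem~\ref{effectivediffusivity} together with $t\leq L_{n+1}^2$, so that $\sqrt{\alpha_n t}\leq \sqrt{2\nu}L_{n+1}$, I would change variables via $y=\sqrt{\alpha_n t}\,z$. The resulting integral is estimated by
\begin{equation*}
\alpha_n t\int_{|z|\geq c\tilde{\kappa}_{n+1}}(4\pi)^{-d/2}e^{-|z|^2/4}|z|^2\,dz\leq CL_{n+1}^2\,\tilde{\kappa}_{n+1}^{d}\,e^{-c\tilde{\kappa}_{n+1}^{2}},
\end{equation*}
for constants $C,c>0$ independent of $n$, by the standard Gaussian tail estimate for the integrand $e^{-|z|^2/4}|z|^2$.

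Finally, I would absorb the prefactors. Since $\tilde{\kappa}_{n+1}=\exp(2c_0(\log\log L_{n+1})^2)$ grows faster than any polynomial in $\log L_{n+1}$, the quantity $\tilde{\kappa}_{n+1}^{2}$ eventually dominates both $\log(L_{n+1}^{2})$ and $\log(\tilde{\kappa}_{n+1}^{d})$, so that for $L_0$ as fixed in (\ref{constants}),
\begin{equation*}
L_{n+1}^{2}\,\tilde{\kappa}_{n+1}^{d}\,e^{-c\tilde{\kappa}_{n+1}^{2}}\leq e^{-\tilde{\kappa}_{n+1}}
\end{equation*}
uniformly in $n\geq 0$, which yields the claim after adjusting $C$.

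There is essentially no obstacle here beyond bookkeeping: the polynomial weight $|y|^2$ in the initial data and the $L_{n+1}^{2}$ factor produced by the change of variables are harmless because the exponential decay $e^{-c\tilde{\kappa}_{n+1}^{2}}$ is faster than polynomial in $L_{n+1}$. The only point requiring a little care is checking that the dominance holds already at $n=0$, which is ensured by having chosen $L_0$ sufficiently large in (\ref{constants}).
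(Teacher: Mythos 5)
Your proposal is correct and follows essentially the same route as the paper: restrict the integral to $\mathbb{R}^d\setminus B_{3\tilde{D}_{n+1}}$, rescale by $\sqrt{\alpha_n t}\leq \sqrt{2\nu}\,L_{n+1}$ using Theorem \ref{effectivediffusivity} to reduce to a Gaussian tail over $\{|z|\geq c\tilde{\kappa}_{n+1}\}$, and absorb the polynomial prefactor $L_{n+1}^2\tilde{\kappa}_{n+1}^d$ into $e^{-c\tilde{\kappa}_{n+1}^2}$ to obtain $Ce^{-\tilde{\kappa}_{n+1}}$. The bookkeeping in your final absorption step matches the paper's (unspelled-out) estimate, so nothing further is needed.
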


\begin{proof}  Fix $n\geq 0$ and $0\leq t\leq L_{n+1}^2$.  We have $$\abs{\overline{R}_{n,t}(1-\tilde{\chi}_{n+1})q(0)}\leq \int_{\mathbb{R}^d\setminus B_{3\tilde{D}_{n+1}}}\abs{y}^2(4\pi\alpha_n t)^{-d/2}e^{-\abs{y-x}^2/4\alpha_n t}\;dy.$$  Therefore, since $0\leq t\leq L_{n+1}^2$, Theorem \ref{effectivediffusivity} implies that there exists $C>0$ and $c>0$ independent of $n$ such that $$\abs{\overline{R}_{n,t}(1-\tilde{\chi}_{n+1})q(0)}\leq CL_{n+1}^2\int_{\mathbb{R}^d\setminus B_{c\tilde{\kappa}_{n+1}}}\abs{y}^2e^{-\abs{y}^2}\;dy\leq CL_{n+1}^2\tilde{\kappa}_{n+1}^de^{-c\tilde{\kappa}_{n+1}^2}\leq Ce^{-\tilde{\kappa}_{n+1}},$$ which, since $n\geq 0$ and $0\leq t\leq L_{n+1}^2$ were arbitrary, completes the argument.  \end{proof}

In order to control the limiting quantity appearing in (\ref{d_statement}) for times of order $L_n^2$, it will be necessary to obtain Controls \ref{Holder} and \ref{localization} at levels sufficiently smaller than $n$.  This is necessary because Control \ref{Holder} does not provide an effective coupling over short periods in time.  We therefore fix an integer $\overline{m}$ satisfying \begin{equation}\label{d_overline} 3\leq(1+a)^{\overline{m}}<4.\end{equation}  The existence of such an integer is guaranteed, in view of (\ref{Holderexponent}).  It is also necessary to obtain Controls \ref{Holder} and \ref{localization} on a large portion of space.

For each $n\geq 0$, let $$\tilde{A}_n=\left\{\;\omega\in\Omega\;|\;\textrm{We have Controls}\;\ref{Holder}\;\textrm{and}\;\ref{localization}\;\textrm{for each}\;x\in L_{n}\mathbb{Z}^d\cap B_{5\tilde{D}_{n+1}}\;\right\},$$ and, for each $n\geq \overline{m}$, let \begin{equation}\label{d_prevent} A_n=\bigcap_{j=n-\overline{m}}^{n+1}\tilde{A}_j.\end{equation}  The following proposition proves that the conditions of $A_n$ are satisfied with high probability.

\begin{prop}\label{d_probability}  Assume (\ref{steady}) and (\ref{constants}).  For each $n\geq \overline{m}$, for $C>0$ independent of $n$, $$\mathbb{P}\left(\Omega\setminus A_n\right)\leq CL_{n-\overline{m}}^{(d+1)a-M_0}.$$\end{prop}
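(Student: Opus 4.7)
The plan is a straightforward union bound, executed first over the finitely many levels appearing in the definition of $A_n$ and then over the lattice points involved in each $\tilde{A}_j$. Since $A_n = \bigcap_{j=n-\overline{m}}^{n+1} \tilde{A}_j$ is an intersection of $\overline{m}+2$ events (and $\overline{m}$ is a fixed constant), it suffices to prove that $\mathbb{P}(\Omega\setminus\tilde{A}_j) \leq C L_j^{(d+1)a - M_0}$ for each $j$ in this range and then use that $M_0 > (d+1)a$ by $(\ref{delta})$ so that the resulting bound is decreasing in $j$.

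To bound $\mathbb{P}(\Omega \setminus \tilde{A}_j)$, I would apply the union bound over the lattice points, writing
$$\Omega \setminus \tilde{A}_j \subset \bigcup_{x \in L_j \mathbb{Z}^d \cap B_{5\tilde{D}_{j+1}}} \bigl(\Omega \setminus B_j(x)\bigr),$$
and then exploiting the stationarity identity $(\ref{mainevent1})$, $\mathbb{P}(\Omega\setminus B_j(x)) = \mathbb{P}(\Omega\setminus B_j(0))$, together with Theorem \ref{induction}, $\mathbb{P}(\Omega\setminus B_j(0))\leq L_j^{-M_0}$. This gives
$$\mathbb{P}(\Omega \setminus \tilde{A}_j) \leq \bigl|L_j \mathbb{Z}^d \cap B_{5\tilde{D}_{j+1}}\bigr| \cdot L_j^{-M_0}.$$

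The lattice point count is elementary: the number of sites in $L_j\mathbb{Z}^d \cap B_{5\tilde{D}_{j+1}}$ is bounded by $C(\tilde{D}_{j+1}/L_j)^d$, and since $\tilde{D}_{j+1}/L_j = \ell_j \tilde{\kappa}_{j+1}$ with $\ell_j \leq L_j^a$ from $(\ref{L})$, this is at most $C L_j^{ad} \tilde{\kappa}_{j+1}^d$. The only mildly delicate step, and the one step requiring a little attention, is absorbing the logarithmic-type factor $\tilde{\kappa}_{j+1}^d$ into a single extra power $L_j^a$. This follows from the fact, recorded below $(\ref{kappa})$, that $\tilde{\kappa}_n = \exp(2c_0(\log\log L_n)^2)$ is eventually dominated by every positive power of $L_n$; for $L_0$ as fixed in $(\ref{constants})$ this absorption is valid uniformly in $j$. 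We therefore obtain $\mathbb{P}(\Omega\setminus\tilde{A}_j) \leq C L_j^{(d+1)a - M_0}$.

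Finally, I would combine the level-wise bounds by noting that $(d+1)a - M_0 < 0$, so $L_j^{(d+1)a - M_0}$ is maximized over $j\in\{n-\overline{m},\ldots,n+1\}$ at $j=n-\overline{m}$. Summing the $\overline{m}+2$ terms and absorbing this fixed number into the constant yields $\mathbb{P}(\Omega\setminus A_n) \leq C L_{n-\overline{m}}^{(d+1)a - M_0}$, which is the claim. No substantial obstacle arises; the entire argument is a clean application of Theorem \ref{induction}, stationarity, and the sub-polynomial growth of $\tilde{\kappa}_n$.
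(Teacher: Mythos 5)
Your proposal is correct and follows essentially the same route as the paper: a union bound over the lattice points in $B_{5\tilde{D}_{j+1}}$ combined with (\ref{mainevent1}) and Theorem \ref{induction}, absorption of $\tilde{\kappa}_{j+1}^d$ into an extra factor $L_j^a$ via its sub-polynomial growth, and a final sum over the $\overline{m}+2$ levels using $(d+1)a-M_0<0$. No discrepancies worth noting.
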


\begin{proof}  In view of (\ref{mainevent1}) and Theorem \ref{induction}, for each $n\geq 0$, for $C>0$ independent of $n$, $$\mathbb{P}\left(\Omega\setminus \tilde{A}_n\right)\leq C\left(\frac{\tilde{D}_{n+1}}{L_n}\right)^dL_n^{-M_0}.$$  And, using (\ref{L}), (\ref{kappa}), (\ref{D}) and (\ref{delta}), since there exists $C>0$ satisfying, for each $n\geq 0$, $$\tilde{\kappa}_n^d\leq CL_n^a,$$ we have, for $C>0$ independent of $n$, $$\mathbb{P}\left(\Omega\setminus \tilde{A}_n\right)\leq C\tilde{\kappa}_{n+1}^dL_n^{da-M_0}\leq CL_n^{(d+1)a-M_0}.$$  Therefore, since $(d+1)a-M_0<0$, for each $n\geq \overline{m}$, for $C>0$ independent of $n$, $$\mathbb{P}\left(\Omega\setminus A_n\right)\leq \sum_{j=n-\overline{m}}^{n+1}\mathbb{P}\left(\Omega\setminus \tilde{A}_j\right)\leq CL_{n-\overline{m}}^{(d+1)a-M_0},$$ which completes the argument.  \end{proof}

The following proposition estimates the error introduced by localizing the quadratic initial data.  We prove that, on the event $\tilde{A}_{n+1}$, this error vanishes as $n$ approaches infinity.

\begin{prop}\label{d_localize}  Assume (\ref{steady}).  For each $n\geq 0$ and $\omega\in \tilde{A}_{n+1}$, for all $0\leq t<L_{n+1}^2$, $$\sup_{x\in B_{2\tilde{D}_{n+1}}}P_{x,\omega}\left((1-\tilde{\chi}_{n+1}(X_t))\abs{X_t}^2\right)\leq C\tilde{D}_{n+1}^2e^{-\kappa_{n+1}}.$$\end{prop}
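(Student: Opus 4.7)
The plan is to use Control \ref{localization} at level $n+1$, together with a layer-cake decomposition, to translate the exponential tail bound on $X_t^*$ at the scale $v/D_{n+1}$ into the factor $e^{-\kappa_{n+1}}$ that appears in the statement. First I would observe that since $\tilde{\chi}_{n+1}\equiv 1$ on $\overline{B}_{3\tilde{D}_{n+1}}$ and $|x|\leq 2\tilde{D}_{n+1}$, the integrand $(1-\tilde{\chi}_{n+1}(X_t))|X_t|^2$ vanishes unless $|X_t|\geq 3\tilde{D}_{n+1}$, which by the triangle inequality forces $|X_t-x|\geq \tilde{D}_{n+1}$, and hence $X_t^*\geq \tilde{D}_{n+1}$. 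Next I would check that Control \ref{localization} at level $n+1$ applies uniformly on $B_{2\tilde{D}_{n+1}}$: on $\tilde{A}_{n+1}$ it holds at each lattice point $z\in L_{n+1}\mathbb{Z}^d\cap B_{5\tilde{D}_{n+2}}$, and any $x\in B_{2\tilde{D}_{n+1}}$ lies within $\tfrac{\sqrt{d}}{2}L_{n+1}\leq 30\sqrt{d}L_{n+1}$ of some such $z$ thanks to (\ref{D_1}). Since $t<L_{n+1}^2$ and $X_t^*\leq X_{L_{n+1}^2}^*$, this yields $P_{x,\omega}(X_t^*\geq v)\leq \exp(-v/D_{n+1})$ for every $v\geq D_{n+1}$.

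Then I would split $|X_t|^2\leq 2|x|^2+2(X_t^*)^2\leq 8\tilde{D}_{n+1}^2+2(X_t^*)^2$ on the event $\{X_t^*\geq \tilde{D}_{n+1}\}$ to reduce the expectation to $8\tilde{D}_{n+1}^2\,P_{x,\omega}(X_t^*\geq \tilde{D}_{n+1})+2\,P_{x,\omega}((X_t^*)^2\mathbf{1}_{X_t^*\geq \tilde{D}_{n+1}})$. The first term is immediate from the localization bound. For the second I would use the layer-cake identity $E[(X_t^*)^2\mathbf{1}_{X_t^*\geq R}]=R^2P(X_t^*\geq R)+\int_R^\infty 2rP(X_t^*\geq r)\,dr$ with $R=\tilde{D}_{n+1}$, plug in the localization bound, and evaluate $\int_{\tilde{D}_{n+1}}^\infty 2re^{-r/D_{n+1}}\,dr=2D_{n+1}(\tilde{D}_{n+1}+D_{n+1})e^{-\tilde{D}_{n+1}/D_{n+1}}\leq 4\tilde{D}_{n+1}^2 e^{-\tilde{D}_{n+1}/D_{n+1}}$.

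Everything then collapses through the identity $\tilde{D}_{n+1}/D_{n+1}=\tilde{\kappa}_{n+1}/\kappa_{n+1}=\kappa_{n+1}$, which follows directly from (\ref{kappa}) and (\ref{D}) and is the heart of the estimate: it is what turns the localization exponent $-v/D_{n+1}$ evaluated at the scale $v=\tilde{D}_{n+1}$ into the required $e^{-\kappa_{n+1}}$. The main obstacle is organizational rather than computational. One needs uniformity across the continuum of times $t<L_{n+1}^2$, which is granted for free because Control \ref{localization} bounds the entire running maximum on $[0,L_{n+1}^2]$, and one needs to confirm that the lattice covering used in the definition of $\tilde{A}_{n+1}$ exhausts $B_{2\tilde{D}_{n+1}}$ while remaining inside the ambient ball $B_{5\tilde{D}_{n+2}}$ where the control is actually assumed.
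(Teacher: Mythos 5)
Your proposal is correct and follows essentially the same route as the paper: reduce to the event $\{X_t^*\geq\tilde{D}_{n+1}\}$ via the support of $1-\tilde{\chi}_{n+1}$, apply the layer-cake decomposition together with Control \ref{localization} (uniformly in $t<L_{n+1}^2$ via the running maximum $X^*_{L_{n+1}^2}$), and conclude from $\tilde{D}_{n+1}/D_{n+1}=\tilde{\kappa}_{n+1}/\kappa_{n+1}=\kappa_{n+1}$. The paper states this more tersely; your additional verifications (the lattice covering of $B_{2\tilde{D}_{n+1}}$ inside $B_{5\tilde{D}_{n+2}}$ and the explicit evaluation of the tail integral) are accurate details it leaves implicit.
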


\begin{proof}  Fix $n\geq 0$, $\omega\in \tilde{A}_{n+1}$ and $L_n^2\leq t\leq L_{n+1}^2$.  Then, recalling (\ref{star}), and in view of (\ref{d_cutoff}), for every $x\in B_{2\tilde{D}_{n+1}}$, since $0\leq t\leq L_{n+1}^2$, $$P_{x,\omega}\left((1-\tilde{\chi}_{n+1}(X_t))\abs{X_t}^2\right)\leq \tilde{D}_{n+1}^2P_{x,\omega}\left(X^*_{L_{n+1}^2}\geq \tilde{D}_{n+1}\right)+2\int_{\tilde{D}_{n+1}}^\infty rP_{x,\omega}\left(X^*_{L_{n+1}^2}\geq r\right)\;dr.$$  Therefore, since $\omega\in \tilde{A}_{n+1}$, Control \ref{localization} implies that, for $C>0$ independent of $n$, $$\sup_{x\in B_{2\tilde{D}_{n+1}}}P_{x,\omega}\left((1-\tilde{\chi}_{n+1}(X_t))\abs{X_t}^2\right)\leq C\tilde{D}_{n+1}^2e^{-\kappa_{n+1}},$$ which, since $n\geq 0$, $\omega\in \tilde{A}_{n+1}$ and $0\leq t\leq L_{n+1}^2$ were arbitrary, completes the argument.  \end{proof}

We are now prepared to present our primary control of the diffusivity.

\begin{prop}\label{d_main} Assume (\ref{steady}) and (\ref{constants}).  For each $n\geq \overline{m}$, $\omega\in A_n$ and $L_n^2\leq t< L_{n+1}^2$, for $C>0$ independent of $n$ and $t$, $$\abs{\frac{1}{td}R_tq(0,\omega)-\overline{\alpha}}\leq \max_{0\leq j\leq \overline{m}}\left\{\abs{\alpha_{n-j}-\overline\alpha}\right\}+CL_{n-\overline{m}}^{10a-\delta}.$$\end{prop}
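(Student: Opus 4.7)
The plan is to use the semigroup property to decompose $R_t q(0,\omega)$ across the scales $L_{n-j}^2$, $j=0,\ldots,\overline{m}$, replace each random factor $R_{n-j}$ by its homogenized counterpart $\overline{R}_{n-j}$ using Control~\ref{d_Holdercontrol}, and then evaluate the resulting deterministic object. Since $\overline{R}_{n-j,s}q(x)=\abs{x}^2+\alpha_{n-j}sd$ is the heat semigroup at diffusivity $\alpha_{n-j}$, iterated application of these deterministic propagators to $q$ produces $\sum_j \alpha_{n-j}s_j d$, where $s_j$ denotes the total time spent at scale $L_{n-j}^2$. After division by $td$, this becomes a convex combination of the $\alpha_{n-j}$'s, whose distance from $\overline{\alpha}$ is bounded by $\max_j\abs{\alpha_{n-j}-\overline{\alpha}}$.

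To carry this out, I first decompose greedily $t=\sum_{j=0}^{\overline{m}}k_{n-j}L_{n-j}^2+r$ with $0\leq r<L_{n-\overline{m}}^2$ and $0\leq k_{n-j}\leq \ell_{n-j}^2 \lesssim L_{n-j}^{2a}$, so that by the (commuting) semigroup property $R_t=R_n^{k_n}\cdots R_{n-\overline{m}}^{k_{n-\overline{m}}}R_r$. Since $q$ is unbounded, I replace it by $q_{n+1}=\tilde{\chi}_{n+1}q$ throughout the iteration; the tail $(1-\tilde{\chi}_{n+1})q$ propagates to a contribution of at most $C\tilde{D}_{n+1}^2 e^{-\kappa_{n+1}}$, by Propositions~\ref{d_localize} and~\ref{d_hquadratic}. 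I then iterate, at each scale $n-j$, $k_{n-j}$ applications of Control~\ref{d_Holdercontrol} to replace $R_{n-j}$ by $\overline{R}_{n-j}$. The local nature of Control~\ref{d_Holdercontrol} is upgraded to a global estimate via Propositions~\ref{prelim_extension} and~\ref{prelim_product}, using a covering of the effective support by balls centered on $L_{n-j}\mathbb{Z}^d\cap B_{5\tilde{D}_{n+1}}$ --- which is precisely the lattice on which the event $A_n$ guarantees Controls~\ref{Holder} and~\ref{localization}; the boundary errors produced by the cutoffs at each step are absorbed via Propositions~\ref{local11} and~\ref{d_hlocalize}. By Proposition~\ref{d_contract}, the operators $\overline{R}_{n-j}$ are contractions on $\abs{\cdot}_{n-j}$, so the H\"older norms of the iterates stay bounded by $\abs{q_{n+1}}_{n-j}\lesssim L_{n+1}^2$ throughout. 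Accumulating the per-step error $L_{n-j}^{-\delta}\cdot L_{n+1}^2$ over $k_{n-j}\lesssim L_{n-j}^{2a}$ applications at each scale, dividing by $td\geq L_n^2 d$, and using $L_{n+1}^2/L_n^2\lesssim L_n^{2a}\leq L_{n-j}^{8a}$ (from (\ref{d_overline})), produces a per-scale error of order $L_{n-j}^{10a-\delta}$. Since $10a-\delta<0$ by (\ref{Holderexponent}) and (\ref{delta}), this is dominated by $L_{n-\overline{m}}^{10a-\delta}$, and the residual $r<L_{n-\overline{m}}^2$ contributes only $\lesssim r/t \leq L_{n-\overline{m}}^{-4}$ after division, absorbed in the error.

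The main obstacle will be cleanly propagating the rescaled H\"older norm through the iterated comparison. Each application of Control~\ref{d_Holdercontrol} is intrinsically local, demanding a reinstalled cutoff around every basepoint $x\in L_{n-j}\mathbb{Z}^d\cap B_{5\tilde{D}_{n+1}}$, and the associated boundary errors must be reabsorbed at every step without accumulation across scales. The fact that $A_n$ is defined as an intersection over scales $n-\overline{m}\leq j\leq n+1$ and over the dense lattice of basepoints inside $B_{5\tilde{D}_{n+1}}$ is exactly what is needed to sustain this localization procedure at every step of the iteration; the choice of $\overline{m}$ in (\ref{d_overline}) plays its essential role by ensuring that $L_n$ is only a polynomially bounded power of $L_{n-\overline{m}}$, which is what keeps the per-scale error of order $L_{n-j}^{10a-\delta}$ and finally summable into the stated $CL_{n-\overline{m}}^{10a-\delta}$.
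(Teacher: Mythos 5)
Your proposal follows essentially the same route as the paper: the same greedy decomposition $t=\sum_{j=0}^{\overline{m}}k_{n-j}L_{n-j}^2+\tilde t$, the cutoff of $q$ at scale $\tilde D_{n+1}$, the iterated replacement of $R_{n-j}$ by $\overline{R}_{n-j}$ via Control \ref{Holder} with the extension/product/contraction propositions and relocalization at each step, the evaluation of the deterministic iterate as $d\sum_j\alpha_{n-j}k_{n-j}L_{n-j}^2$, and the same exponent bookkeeping $L_{n-j}^{2a-\delta}\cdot L_n^{2a}\leq L_{n-j}^{10a-\delta}$. The one imprecision is the residual: since the propagated function has gradient of size $\tilde D_{n+1}$, the crude comparison of $R_{\tilde t}$ with the identity (or with $\overline{R}_{n,\tilde t}$, as in the paper) costs $C\tilde t\,\tilde D_{n+1}$ rather than $C\tilde t$, but after dividing by $t\geq L_n^2$ this is still $O(L_{n-\overline{m}}^{2-(1-a)(1+a)^{\overline{m}}})$ and is absorbed, which is precisely why $\overline m$ is chosen with $(1+a)^{\overline m}\geq 3$.
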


\begin{proof}  Fix $n\geq \overline{m}$, $\omega\in A_n$ and $L_n^2\leq t\leq L_{n+1}^2$.  In what follows, we suppress the dependence on $\omega$ in our notation.

We first recall (\ref{d_quadratic}) and observe using Proposition \ref{d_localize} that, since $L_n^2\leq t<L_{n+1}^2$, for $C>0$ independent of $n$ and $t$, \begin{equation}\label{d_main_0}\abs{R_tq(0,\omega)-R_tq_{n+1}(0,\omega)}=P_{0,\omega}\left((1-\tilde{\chi}_{n+1}(X_t))\abs{X_t}^2\right)\leq C\tilde{D}_{n+1}^2e^{-\kappa_{n+1}}.\end{equation}  And, recalling (\ref{levelholder}), using (\ref{d_cutoff}), for each $m\leq n$, for $C>0$ independent of $m$ and $n$, \begin{equation}\label{d_main_00} \abs{q_{n+1}}_m\leq \abs{q_{n+1}}_n\leq C\tilde{D}_{n+1}^2.\end{equation}  This estimate allows us to effectively apply Control \ref{Holder}.

We now decompose the operator $R_t$.  Recalling (\ref{L}), choose $1\leq k_n<\ell_n^2$ such that \begin{equation}\label{d_main_1} k_nL_n^2\leq t<(k_n+1)L_n^2,\end{equation} and, proceeding inductively, for each $1\leq j\leq \overline{m}$, define  $1\leq k_j<\ell_j^2$ such that \begin{equation}\label{d_main_2} k_{n-j}L_{n-j}^2\leq t-\sum_{i=0}^{j-1}k_{n-i}L_{n-i}^2<(k_{n-j}+1)L_{n-j}^2.\end{equation}  We write \begin{equation}\label{d_main_3} \tilde{t}=t-\sum_{j=0}^{\overline{m}}k_{n-j}L_{n-j}^2,\end{equation} observing that $0\leq \tilde{t}<L_{n-\overline{m}}^2$, and form the decomposition \begin{equation}\label{d_main_4}R_t=R_{\tilde{t}}\prod_{j=0}^{\overline{m}}\left(R_{n-j}\right)^{k_{n-j}}.\end{equation}

In order to apply Control \ref{Holder}, it remains to localize the kernels appearing in (\ref{d_main_4}).  In what follows, we will use that fact that, for each $s\geq 0$, $$\norm{R_sq_{n+1}}_{L^\infty(\mathbb{R}^d)}\leq \norm{q_{n+1}}_{L^\infty(\mathbb{R}^d)}.$$  We observe using Control \ref{localization} and (\ref{d_main_00}) that, for each $x\in B_{2\tilde{D}_{n+1}}$, since $\omega\in A_n$ and $t<L_{n+1}^2$, for $C>0$ independent of $n$, $$\abs{R_{\tilde{t}}\prod_{j=0}^{\overline{m}}\left(R_{n-j}\right)^{k_{n-j}}q_{n+1}(x)-R_{\tilde{t}}\prod_{j=1}^{\overline{m}}\left(R_{n-j}\right)^{k_{n-j}}\left(R_n\right)^{k_n-1}\tilde{\chi}_{n+1}R_nq_{n+1}(x)}\leq C\tilde{D}_{n+1}^2 e^{-\kappa_{n+1}}.$$  Proceeding inductively, for $C>0$ independent of $n$, for each $x\in B_{2\tilde{D}_{n+1}}$, \begin{equation}\label{d_main_6} \abs{R_tq_{n+1}(x)-R_{\tilde{t}}\prod_{j=1}^{\overline{m}}\left(R_{n-j}\right)^{k_{n-j}}\left(\tilde{\chi}_{n+1}R_n\right)^{k_n}q_{n+1}(x)}\leq Ck_n\tilde{D}_{n+1}^2e^{-\kappa_{n+1}}\leq C\ell_n^2\tilde{D}_{n+1}^2e^{-\kappa_{n+1}}.\end{equation}

In an identical fashion, we use Control \ref{localization} and (\ref{d_main_00}) to conclude that, since $\omega\in A_n$ and since, for each $0\leq j\leq \overline{m}$, $$\tilde{t}+\sum_{i=j}^{\overline{m}}k_{n-i}L_{n-i}^2 < L_{n-j+1}^2,$$ for each $x\in B_{2\tilde{D}_{n-\overline{m}+1}}$, for $C>0$ independent of $n$, \begin{multline}\label{d_main_7} \abs{R_tq_{n+1}(x)-R_{\tilde{t}}\prod_{j=0}^{\overline{m}}\left(\tilde{\chi}_{n-j+1}R_{n-j}\right)^{k_{n-j}}q_{n+1}(x)}\leq C\sum_{j=0}^{\overline{m}}\ell_{n-j}^2\tilde{D}_{n+1}^2e^{-\kappa_{n-j+1}} \\ \leq C\ell_{n-\overline{m}}^2\tilde{D}_{n+1}^2e^{-\kappa_{n-\overline{m}+1}}.\end{multline}

We are now prepared to apply Control \ref{Holder}.  Notice that, for each $x\in\mathbb{R}^d$,  for integers $s_i\geq 0$, \begin{multline*}\left(\tilde{\chi}_{n+1}R_n\right)^{k_n}q_{n+1}(x) = \left(\tilde{\chi}_{n+1}S_n+\tilde{\chi}_{n+1}\overline{R}_n\right)^{k_n} =\left(\overline{R}_n\right)^{k_n}q_{n+1}(x) \\ +\sum_{m=1}^{k_n}\sum_{s_0+s_1+\ldots+s_m+m=k_n}\left(\tilde{\chi}_{n+1}\overline{R}_n\right)^{s_0}\tilde{\chi}_{n+1}S_n\left(\tilde{\chi}_{n+1}\overline{R}_n\right)^{s_1}\ldots\tilde{\chi}_{n+1}S_n\left(\overline{R}_n\right)^{s_m}q_{n+1}(x).\end{multline*}  Therefore, using Proposition \ref{prelim_product}, Proposition \ref{prelim_extension} and Proposition \ref{d_contract}, since $\omega\in A_n$, Control \ref{Holder} implies that, for each $x\in\mathbb{R}^d$, for $C>0$ independent of $n$, \begin{multline}\label{d_main_8} \left|\left(\tilde{\chi}_{n+1}R_n\right)^{k_n}q_{n+1}(x)-\left(\tilde{\chi}_{n+1}\overline{R}_n\right)^{k_n}q_{n+1}(x)\right|\leq C\sum_{m=1}^{k_n} {k_n \choose m}3^mL_n^{-m\delta}\tilde{D}_{n+1}^2 \\ \leq C\sum_{m=1}^{\ell_n^2}{\ell_n^2 \choose m} 3^m L_n^{-m\delta}\tilde{D}_{n+1}^2\leq C\sum_{m=1}^{\ell_n^2}\frac{3^m}{m!}L_n^{m(2a-\delta)}\tilde{D}_{n+1}^2\leq CL_n^{2a-\delta}\tilde{D}_{n+1}^2.\end{multline}

Therefore, for each $x\in\mathbb{R}^d$, for $C>0$ independent of $n$, $$\left| R_{\tilde{t}}\prod_{j=0}^{\overline{m}}\left(\tilde{\chi}_{n-j+1}R_{n-j}\right)^{k_{n-j}}q_{n+1}(x)-R_{\tilde{t}}\prod_{j=1}^{\overline{m}}\left(\tilde{\chi}_{n-j+1}R_{n-j}\right)^{k_{n-j}}\left(\tilde{\chi}_{n+1}\overline{R}_n\right)^{k_n}q_{n+1}(x)\right|\leq CL_n^{2a-\delta}\tilde{D}_{n+1}^2.$$  And, proceeding by an identical argument, since Proposition \ref{prelim_product}, Proposition \ref{d_contract} and (\ref{d_main_00}) imply that, for each $0\leq j\leq \overline{m}$, $$\abs{\prod_{i=0}^j\left(\tilde{\chi}_{n-i+1}\overline{R}_{n-i}\right)^{k_{n-i}}q_{n+1}(x)}_{n-j-1}\leq \abs{q_{n+1}}_{n-j-1}\leq C\tilde{D}_{n+1}^2,$$ we have, for each $x\in\mathbb{R}^d$, for $C>0$ independent of $n$, \begin{multline}\label{d_main_9}  \left| R_{\tilde{t}}\prod_{j=0}^{\overline{m}}\left(\tilde{\chi}_{n-j+1}R_{n-j}\right)^{k_{n-j}}q_{n+1}(x)-R_{\tilde{t}}\prod_{j=0}^{\overline{m}}\left(\tilde{\chi}_{n-j+1}\overline{R}_{n-j}\right)^{k_{n-j}}q_{n+1}(x)\right| \\ \leq \sum_{j=0}^{\overline{m}} CL_{n-j}^{2a-\delta}\tilde{D}_{n+1}^2\leq CL_{n-\overline{m}}^{2a-\delta}\tilde{D}_{n+1}^2.\end{multline}

In what follows, we use that fact that, for each $n\geq 0$ and $t\geq 0$, for every $f\in L^\infty(\mathbb{R}^d)$, $$\norm{\overline{R}_{n,t}f}_{L^\infty(\mathbb{R}^d)}\leq \norm{f}_{L^\infty(\mathbb{R}^d)}.$$  Proposition \ref{d_hlocalize} and (\ref{d_main_00}) imply that, for each $x\in B_{2\tilde{D}_{n+1}}$, for $C>0$ independent of $n$, \begin{multline*}\left|\prod_{j=0}^{\overline{m}}\left(\tilde{\chi}_{n-j+1}\overline{R}_{n-j}\right)^{k_{n-j}}q_{n+1}(x)-\prod_{j=1}^{\overline{m}}\left(\tilde{\chi}_{n-j+1}\overline{R}_{n-j}\right)^{k_{n-j}}\left(\tilde{\chi}_{n+1}\overline{R}_n\right)^{k_n-1}\overline{R}_nq_{n+1}(x)\right| \\ \leq C\tilde{D}_{n+1}^2e^{-\tilde{\kappa}_{n+1}}.\end{multline*}  And, proceeding inductively, for $C>0$ independent of $n$, for each $x\in B_{2\tilde{D}_{n+1}}$, \begin{multline*}\left|\prod_{j=0}^{\overline{m}}\left(\tilde{\chi}_{n-j+1}\overline{R}_{n-j}\right)^{k_{n-j}}q_{n+1}(x)-\prod_{j=1}^{\overline{m}}\left(\tilde{\chi}_{n-j+1}\overline{R}_{n-j}\right)^{k_{n-j}}\left(\overline{R}_n\right)^{k_n}q_{n+1}(x)\right| \\ \leq Ck_n\tilde{D}_{n+1}^2e^{-\tilde{\kappa}_{n+1}}\leq C\ell_n^2\tilde{D}_{n+1}^2e^{-\tilde{\kappa}_{n+1}}.\end{multline*}  By repeating the identical argument, for $C>0$ independent of $n$, for each $x\in B_{2\tilde{D}_{n-\overline{m}+1}}$, \begin{multline}\label{d_main_10}\left|\prod_{j=0}^{\overline{m}}\left(\tilde{\chi}_{n-j+1}\overline{R}_{n-j}\right)^{k_{n-j}}q_{n+1}(x)-\prod_{j=0}^{\overline{m}}\left(\overline{R}_{n-j}\right)^{k_{n-j}}q_{n+1}(x)\right| \\ \leq C\sum_{j=0}^{\overline{m}}\ell_{n-j}^2\tilde{D}_{n+1}^2e^{-\tilde{\kappa}_{n-j+1}}\leq C\ell_{n-\overline{m}}^2\tilde{D}_{n+1}^2e^{-\tilde{\kappa}_{n-\overline{m}+1}}.\end{multline}

Recalling $\chi_{\tilde{D}_{n-\overline{m}}}$ defined in (\ref{cutoff}), we observe that \begin{multline*}\left| R_{\tilde{t}}\left(\prod_{j=0}^{\overline{m}}\left(\tilde{\chi}_{n-j+1}\overline{R}_{n-j}\right)^{k_{n-j}}-\prod_{j=0}^{\overline{m}}\left(\overline{R}_{n-j}\right)^{k_{n-j}}\right)q_{n+1}(0)\right| \leq  \\ \left| R_{\tilde{t}}\chi_{\tilde{D}_{n-\overline{m}}}\left(\prod_{j=0}^{\overline{m}}\left(\tilde{\chi}_{n-j+1}\overline{R}_{n-j}\right)^{k_{n-j}}-\prod_{j=0}^{\overline{m}}\left(\overline{R}_{n-j}\right)^{k_{n-j}}\right)q_{n+1}(0)\right| \\ + \left| R_{\tilde{t}}(1-\chi_{\tilde{D}_{n-\overline{m}}})\left(\prod_{j=0}^{\overline{m}}\left(\tilde{\chi}_{n-j+1}\overline{R}_{n-j}\right)^{k_{n-j}}-\prod_{j=0}^{\overline{m}}\left(\overline{R}_{n-j}\right)^{k_{n-j}}\right)q_{n+1}(0)\right|.\end{multline*}  Therefore, since $\omega\in A_n$ and $0\leq \tilde{t}< L_{n-\overline{m}}^2$, Control \ref{localization} and (\ref{d_main_10}) imply that, for $C>0$ independent of $n$, \begin{multline}\label{d_main_11} \left| R_{\tilde{t}}\left(\prod_{j=0}^{\overline{m}}\left(\tilde{\chi}_{n-j+1}\overline{R}_{n-j}\right)^{k_{n-j}}-\prod_{j=0}^{\overline{m}}\left(\overline{R}_{n-j}\right)^{k_{n-j}}\right)q_{n+1}(0)\right| \\ \leq C\ell_{n-\overline{m}}^2\tilde{D}_{n+1}^2e^{-\tilde{\kappa}_{n-\overline{m}+1}}+C\tilde{D}_{n+1}^2e^{-\tilde{\kappa}_{n-\overline{m}}}\leq C\ell_{n-\overline{m}}^2\tilde{D}_{n+1}^2e^{-\tilde{\kappa}_{n-\overline{m}}}.\end{multline}

Since, for $C>0$ independent of $n$, $$\norm{D\left(\prod_{j=0}^{\overline{m}}\left(\overline{R}_{n-j}\right)^{k_{n-j}}q_{n+1}\right)}_{L^\infty(\mathbb{R}^d)}\leq C\tilde{D}_{n+1},$$ and $$\norm{D^2\left(\prod_{j=0}^{\overline{m}}\left(\overline{R}_{n-j}\right)^{k_{n-j}}q_{n+1}\right)}_{L^\infty(\mathbb{R}^d)}\leq C,$$ we have, using the comparison principle, for every $x\in\mathbb{R}^d$, for $C>0$ independent of $n$, \begin{equation}\label{d_main_12} \left|(R_{\tilde{t}}-\overline{R}_{n,\tilde{t}})\prod_{j=0}^{\overline{m}}\left(\overline{R}_{n-j}\right)^{k_{n-j}}q_{n+1}\right|\leq C\tilde{t}\tilde{D}_{n+1}.\end{equation}  Finally, by a small modification to the argument appearing in Proposition \ref{d_hquadratic}, since $t\leq L_{n+1}^2$, for $C>0$ independent of $n$, \begin{equation}\label{d_main_14} \left|\overline{R}_{n,\tilde{t}}\prod_{j=0}^{\overline{m}}\left(\overline{R}_{n-j}\right)^{k_{n-j}}(q_{n+1}-q)(0)\right|\leq Ce^{-\tilde{\kappa}_{n+1}}.\end{equation}

We therefore conclude that, using (\ref{d_main_0}), (\ref{d_main_6}), (\ref{d_main_9}), (\ref{d_main_10}), (\ref{d_main_11}), (\ref{d_main_12}) and (\ref{d_main_14}), for $C>0$ independent of $n$, \begin{multline}\left|R_tq(0,\omega)-\overline{R}_{n,\tilde{t}}\prod_{j=0}^{\overline{m}}\left(\overline{R}_{n-j}\right)^{k_{n-j}}q(0)\right| \\ =\left|R_tq(0,\omega)-d\left(\alpha_n\tilde{t}+\sum_{j=0}^{\overline{m}}k_{n-j}L_{n-j}^2\alpha_{n-j}\right)\right|\leq CL_{n-\overline{m}}^{2a-\delta}\tilde{D}_{n+1}^2+C\tilde{t}\tilde{D}_{n+1}.\end{multline}  Here, we use that fact that, excepting (\ref{d_main_9}) and (\ref{d_main_12}), the errors appearing on the righthand sides of (\ref{d_main_0}), (\ref{d_main_6}), (\ref{d_main_10}), (\ref{d_main_11}) and (\ref{d_main_14}) vanish as $n$ approaches infinity.

Therefore, using (\ref{Holderexponent}), (\ref{L}), (\ref{kappa}), (\ref{D}), (\ref{delta}) and (\ref{d_overline}), since $t\geq L_n^2$ and $0\leq \tilde{t}<L_{n-\overline{m}}^2$, for $C>0$ independent of $n$, \begin{multline*}\abs{\frac{1}{td}R_tq(0)-\overline{\alpha}}\leq \max_{0\leq j\leq \overline{m}}\left\{\abs{\alpha_{n-j}-\overline\alpha}\right\}+C\tilde{\kappa}_{n+1}^2\left(L_{n-\overline{m}}^{2a(1+a)^{\overline{m}}+2a-\delta}+L_{n-\overline{m}}^{2-(1-a)(1+a)^{\overline{m}}}\right) \\ \leq\max_{0\leq j\leq \overline{m}}\left\{\abs{\alpha_{n-j}-\overline\alpha}\right\}+CL_{n-\overline{m}}^{10a-\delta},\end{multline*} which, since $n\geq 0$, $\omega\in A_n$ and $L_n^2\leq t<L_{n+1}^2$ were arbitrary, completes the argument.  \end{proof}

We may now define the subset of full probability on which we obtain (\ref{d_statement}).  In view of (\ref{Holderexponent}), (\ref{L}), (\ref{delta}) and Proposition \ref{d_probability}, $$\sum_{n=\overline{m}}^\infty \mathbb{P}\left(\Omega\setminus A_n\right)\leq C\sum_{n=\overline{m}}^\infty L_{n-\overline{m}}^{(d+1)a-M_0}<\infty.$$  Therefore, using the Borel-Catelli lemma, we define the subset of full probability \begin{equation}\label{d_subset}\Omega_1=\left\{\;\omega\in \Omega\;|\;\textrm{There exists}\;\overline{n}(\omega)\geq\overline{m}\;\textrm{such that}\;\omega\in A_n\;\textrm{for all}\;n\geq\overline{n}.\;\right\}.\end{equation}  We conclude with the primary result of this section.

\begin{thm}\label{d_diffusive} Assume (\ref{steady}) and (\ref{constants}).  For each $\omega\in \Omega_1$, $$\lim_{t\rightarrow\infty} \frac{1}{td}P_{0,\omega}\left(\abs{X_t}^2\right)=\overline{\alpha}.$$  In particular, for each $\omega\in\Omega_1$ there exists $C_1(\omega)>0$ satisfying $$\sup_{t\geq 1}\frac{1}{td}P_{0,\omega}\left(\abs{X_t}^2\right)<C_1.$$\end{thm}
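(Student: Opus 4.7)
The plan is to combine Proposition \ref{d_main} with the Borel--Cantelli-type construction of $\Omega_1$ and the convergence $\alpha_n\to\overline{\alpha}$ from Theorem \ref{effectivediffusivity}. Since $R_tq(0,\omega)=P_{0,\omega}(|X_t|^2)$ by the representation formula and the definition of $R_t$ in (\ref{d_op}), the estimate of Proposition \ref{d_main} is precisely an estimate on the quantity we wish to control, up to the factor $1/(td)$.

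First I would fix $\omega\in\Omega_1$ and let $\overline{n}=\overline{n}(\omega)$ be as in (\ref{d_subset}), so that $\omega\in A_n$ for every $n\geq\overline{n}$. For any $t\geq L_{\overline{n}}^2$, there is a unique $n\geq\overline{n}$ with $L_n^2\leq t<L_{n+1}^2$, and Proposition \ref{d_main} gives
$$\left|\frac{1}{td}P_{0,\omega}(|X_t|^2)-\overline{\alpha}\right|\leq \max_{0\leq j\leq \overline{m}}|\alpha_{n-j}-\overline{\alpha}|+CL_{n-\overline{m}}^{10a-\delta}.$$
Now I would observe that $10a-\delta<0$: by (\ref{Holderexponent}) we have $a\leq\beta/(1000d)$, while by (\ref{delta}) we have $\delta=5\beta/32$, so $10a-\delta\leq \beta/(100d)-5\beta/32<0$. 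Hence the second term tends to $0$ as $n\to\infty$. The first term tends to $0$ as $n\to\infty$ by Theorem \ref{effectivediffusivity}, since $\overline{m}$ is a fixed integer. As $t\to\infty$ forces $n\to\infty$, this establishes the limiting identity.

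For the supremum statement, I would split into two regimes. For $t\geq L_{\overline{n}}^2$, the displayed estimate and the triangle inequality give an absolute bound $\frac{1}{td}P_{0,\omega}(|X_t|^2)\leq \overline{\alpha}+1$ once $n$ is large enough, which is bounded independently of $t$. For $1\leq t\leq L_{\overline{n}(\omega)}^2$, I would invoke the standard quadratic moment bound for the SDE (\ref{sde}): since $A$ and $b$ are uniformly bounded by (\ref{bounded}), It\^o's formula applied to $|X_s|^2$ together with Gronwall yields $P_{0,\omega}(|X_t|^2)\leq C(1+t)e^{Ct}$ for a constant $C$ depending only on the bounds in (\ref{bounded}). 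Dividing by $td\geq d$ yields a finite $\omega$-dependent bound on this bounded interval, and combining the two regimes gives the asserted constant $C_1(\omega)$.

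There is no real obstacle here: all the analytical work has been carried out in Proposition \ref{d_main}, and the only thing left is to exploit that $\omega\in\Omega_1$ means $\omega\in A_n$ for all sufficiently large $n$, together with the summability used to define $\Omega_1$ through Borel--Cantelli in (\ref{d_subset}). The mild point requiring care is the verification that the error exponent $10a-\delta$ is negative, which follows directly from the quantitative choices in (\ref{Holderexponent}) and (\ref{delta}); apart from this inequality, the proof is a direct assembly of previously established facts.
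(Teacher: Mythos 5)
Your proof is correct and follows essentially the same route as the paper: identify $P_{0,\omega}(\abs{X_t}^2)$ with $R_tq(0,\omega)$, apply Proposition \ref{d_main} on each dyadic-type window $[L_n^2,L_{n+1}^2)$ for $n\geq\overline{n}(\omega)$, and let the two error terms vanish via Theorem \ref{effectivediffusivity} and the sign of $10a-\delta$. The only cosmetic difference is in handling the compact time interval $[1,L_{\overline{n}}^2]$ for the uniform bound, where you use an It\^o/Gronwall moment estimate while the paper simply invokes continuity of $t\mapsto P_{0,\omega}(\abs{X_t}^2)$; both are fine.
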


\begin{proof}  The representation formula for the solution of (\ref{d_eq}) yields $$R_tq(0,\omega)=P_{0,\omega}\left(\abs{X_t}^2\right).$$  Therefore, the proof of convergence follows immediately from the definition of $\Omega_1$ and Proposition \ref{d_main}, since Theorem \ref{effectivediffusivity} implies that $$\lim_{n\rightarrow\infty}\max_{0\leq j\leq \overline{m}}\left\{\abs{\alpha_{n-j}-\overline\alpha}\right\}=0,$$ and (\ref{Holderexponent}), (\ref{L}) and (\ref{delta}) imply that $$\lim_{n\rightarrow\infty}L_{n-\overline{m}}^{10a-\delta}=0.$$  The corresponding bound then follows from the convergence and the fact that solutions to (\ref{d_eq}) with quadratic initial data are continuous.  \end{proof}

\section{A General Entropy Estimate}

In this section, we prove that, on a subset of full probability, the physical entropy determined by the Green's functions, as defined, for each $x\in\mathbb{R}^d$, $t\geq 0$ and $\omega\in\Omega$, by $$H_{t,\omega}(x)=\int_{\mathbb{R}^d}-p_{t,\omega}(x,y)\log\left(p_{t,\omega}(x,y)\right)\;dy,$$ grows at most logarithmically in time.  We remark that the results of this section do not rely upon our fine assumptions regarding the coefficients.  The finite range dependence, isotropy and stationarity are not used here.  Instead, we require only the boundedness, Lipschitz continuity and ellipiticity.

Before proceeding with the proof, we recall some basic facts about the Green's functions which hold independently of $\omega\in\Omega$.  In view of \cite{Fr}, using (\ref{bounded}), (\ref{Lipschitz}) and (\ref{elliptic}), for each $\omega\in \Omega$, the Green's function is such that, for each $T>0$ there exist $C=C(T)>0$ and $c(T)>0$, independent of $\omega$, satisfying, for each $0<t\leq T$, \begin{equation}\label{e_green} \abs{p_{t,\omega}(x,y)}\leq Ct^{-d/2}e^{-c\abs{x-y}^2/t}\;\;\textrm{and}\;\;\abs{D_xp_{t,\omega}(x,y)}\leq Ct^{-(d+1)/2}e^{-c\abs{x-y}^2/t}.\end{equation}  And, for each $t\geq 0$, for each continuous $g:\mathbb{R}^d\rightarrow\mathbb{R}$ growing, for instance, at most quadratically, \begin{equation}\label{e_green_1}R_tg(x,\omega)=P_{x,\omega}\left(g(X_t)\right)=\int_{\mathbb{R}^d}p_{t,\omega}(x,y)g(y)\;dy.\end{equation}

In the following proposition we obtain a rough localization estimate for the Green's functions.  Notice that although this estimate holds globally for $x\in\mathbb{R}^d$ and $\omega\in\Omega$, it only provides an effective estimate at length scales which are much larger than those appearing in Control \ref{localization}.  Recall the notation (\ref{star}).

\begin{prop}\label{e_localize}  Assume (\ref{steady}).  For each $\omega\in\Omega$, $x\in\mathbb{R}^d$ and $t\geq 0$, for $C_1\geq 1$ independent of $x$, $\omega$ and $t$, for each $R>0$, $$P_{x,\omega}\left(X^*_t\geq R\right)\leq e^{-\frac{(R-C_1t)_+^2}{C_1t}}.$$ \end{prop}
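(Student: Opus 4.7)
The estimate is a standard sub-Gaussian concentration bound for the path maximum of a diffusion with bounded drift and uniformly elliptic diffusion matrix — a slight strengthening of the classical exponential tail for Brownian motion in which the drift is absorbed into the linear correction $C_1 t$. Only the boundedness (\ref{bounded}) of $b$ and the ellipticity (\ref{elliptic}) enter; the finite range dependence, isotropy, and small-perturbation hypotheses play no role. The strategy is an exponential-martingale argument applied to the SDE (\ref{sde}), followed by a union bound over coordinate directions and an appropriate choice of $C_1$.

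First, using (\ref{sde}), I decompose $X_s - x = -\int_0^s b(X_r,\omega)\,dr + M_s$ with $M_s = \int_0^s \sigma(X_r,\omega)\,dB_r$. By (\ref{bounded}), the drift piece has norm at most $Cs$ uniformly in $\omega$, so $\{X^*_t \geq R\} \subseteq \{M^*_t \geq R - Ct\}$ whenever $R > Ct$; for $R \leq Ct$ the trivial bound $1$ suffices provided $C_1 \geq C$, since then $(R - C_1 t)_+ = 0$.

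Next, for any unit vector $v \in \mathbb{R}^d$, the scalar process $v \cdot M_s$ is a continuous martingale with quadratic variation $\int_0^s v^t A(X_r,\omega) v \, dr \leq \nu s$ by (\ref{elliptic}), so $\exp(\lambda v \cdot M_s - \lambda^2 \nu s/2)$ is a nonnegative supermartingale starting from $1$ for each $\lambda > 0$. Doob's maximal inequality yields $P_{x,\omega}(\sup_{s \leq t} v \cdot M_s \geq r) \leq \exp(-\lambda r + \lambda^2 \nu t/2)$; optimizing in $\lambda$ gives the scalar Gaussian tail $\exp(-r^2/(2\nu t))$. A union bound over the $2d$ directions $\pm e_1, \ldots, \pm e_d$, combined with the inequality $|y| \leq \sqrt{d}\max_i |y_i|$, then produces
$$P_{x,\omega}(M^*_t \geq r) \leq 2d \exp\bigl(-r^2/(2d\nu t)\bigr),$$
which together with the decomposition gives, for $R \geq Ct$,
$$P_{x,\omega}(X^*_t \geq R) \leq 2d \exp\bigl(-(R - Ct)^2/(2d\nu t)\bigr).$$

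Finally, I choose $C_1 \geq 1$ large enough (depending only on $d$, $\nu$, and $C$) that this bound implies the stated form $\exp(-(R - C_1 t)_+^2/(C_1 t))$. Expanding
$$(R-Ct)^2 = (R - C_1 t)^2 + 2(R - C_1 t)(C_1 - C)t + (C_1 - C)^2 t^2,$$
with $C_1 \geq 2 d \nu$ one has $(R - C_1 t)^2/(2 d \nu t) \geq (R - C_1 t)^2/(C_1 t)$, while the two cross and pure-$t$ terms are nonnegative; taking $C_1$ sufficiently large forces these to dominate the additive $\log(2d)$ from the prefactor, except in a narrow regime in which $R$ exceeds $C_1 t$ by at most $O(\sqrt{t})$ and both sides are close to $1$. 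The main technical obstacle is precisely this residual regime: a short direct comparison (effectively, noting that when $t$ is small and $R$ is only marginally above $C_1 t$ the actual diffusive tail is quantitatively stronger than the coordinate-wise union bound reveals, so that $P_{x,\omega}(X^*_t \geq R)$ drops below the nearly-trivial target) is needed to ensure that a single choice of $C_1$ works uniformly in $R > 0$, $t \geq 0$, $x \in \mathbb{R}^d$, and $\omega \in \Omega$.
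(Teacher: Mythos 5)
Your route is the same as the paper's: the paper decomposes $X_s-x$ into the drift integral, bounded by $Ct$ via (\ref{bounded}), plus the stochastic integral, cites the exponential inequality for martingales from Revuz--Yor to obtain $P_{x,\omega}(X^*_t\geq \tilde R+Ct)\leq e^{-\tilde R^2/(Ct)}$, and sets $\tilde R=(R-Ct)_+$. Your explicit derivation of that inequality (exponential supermartingale, Doob's inequality, optimization in $\lambda$, union bound over $\pm e_1,\dots,\pm e_d$) is correct and is precisely what the citation conceals, including the prefactor $2d$ that the paper silently discards.

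The gap is your final step, and it is a real one: your claim that in the residual regime ($t$ small, $R$ only marginally above $C_1t$) ``the actual diffusive tail is quantitatively stronger\dots so that $P_{x,\omega}(X^*_t\geq R)$ drops below the nearly-trivial target'' is backwards. In that regime $R\ll\sqrt t$, so the path exceeds radius $R$ with probability $1-O(e^{-ct/R^2})$, which lies \emph{above} the target $e^{-(R-C_1t)_+^2/(C_1t)}=1-(R-C_1t)^2/(C_1t)+\cdots$. Concretely, take $A\equiv I$, $b\equiv 0$ (admissible under (\ref{steady})), $t=\epsilon^2$ and $R=C_1\epsilon^2+v\epsilon$ with $v>0$ small and fixed: as $\epsilon\to0$ the left side tends to $P\left(\sup_{r\leq1}\abs{B_r}\geq v\right)=1-O(e^{-c/v^2})$, while the right side equals $e^{-v^2/C_1}\leq 1-v^2/(2C_1)$. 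Hence no choice of $C_1$ makes the stated inequality hold for all $t\geq0$; the proposition as written, and the paper's own one-line proof, share this defect. Both are salvaged by restricting to $t\geq1$ (or to $R\geq 2C_1t$), which covers every invocation in the paper --- Proposition \ref{e_log} applies the bound with elapsed time $t-1\geq1$ and $\abs{z}/2\geq 2C_1(t-1)$. For $t\geq1$ the near-threshold regime is closed not by a sharper tail estimate but by the elementary observation that $P_{x,\omega}(X^*_t\geq C_1t)\leq 2d\,e^{-(C_1-C)^2t/(2d\nu)}\leq\tfrac12$ once $C_1$ is large, while the target exceeds $\tfrac12$ whenever $(R-C_1t)^2\leq C_1t\log2$; your union-bound computation then handles $(R-C_1t)^2>C_1t\log2$, since there the cross term $(C_1-C)(R-C_1t)/(d\nu)$ already dominates $\log(2d)$.
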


\begin{proof}  Fix $x\in\mathbb{R}^d$, $\omega\in\Omega$, $t\geq 0$ and $R\geq 0$.  We recall that, almost surely with respect to $P_{x,\omega}$, for  $B_s$ a Brownian motion on $\mathbb{R}^d$ under $P_{x,\omega}$ with respect to the canonical right-continuous filtration on $\C([0,\infty);\mathbb{R}^d)$, paths $X_s\in\C([0,\infty);\mathbb{R}^d)$ satisfy the stochastic differential equation $$\left\{\begin{array}{l} dX_s=-b(X_s,\omega)dt+\sigma(X_s,\omega)dB_s, \\ X_0=x.\end{array}\right.$$  Therefore, using the exponential inequality for Martingales, see Revuz and Yor \cite{RY}, (\ref{bounded}) and (\ref{Lipschitz}), for every $\tilde{R}\geq 0$, for $C_1>0$ independent of $\tilde{R}$, $t$, $x$ and $\omega$, \begin{equation}\label{u_martingale_2}P_{x,\omega}\left(X^*_t\geq \tilde{R}+Ct\right)\leq e^{-\frac{\tilde{R}^2}{Ct}}.\end{equation}

Therefore, by choosing $\tilde{R}=(R-CT)_+$ in (\ref{u_martingale_2}), for $C>0$ independent of $x$, $t$, $\omega$ and $R$, $$P_{x,\omega}\left(X^*_t\geq R\right)\leq e^{-\frac{(R-Ct)_+^2}{Ct}},$$ which, since $x$, $t$, $\omega$ and $R$ were arbitrary, completes the argument.  \end{proof}

The following proposition provides our basic control of the entropy.  We prove that, uniformly in $\Omega$, the entropy $H_{t,\omega}(0)$ grows at most logarithmically in time.

\begin{prop}\label{e_log}  Assume (\ref{steady}).  For each $\omega\in\Omega$, there exists $C>0$ independent of $\omega$ such that, for each $t\geq 1$, $$H_{t,\omega}(0)\leq C\left(\log(t)+1\right).$$\end{prop}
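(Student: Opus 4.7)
The plan is to combine a quenched second-moment bound on $X_t$, provided by Proposition \ref{e_localize}, with the elementary fact that, among probability densities on $\mathbb{R}^d$ with a prescribed second moment, the Gaussian maximizes differential entropy.

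The first step is to control $\int_{\mathbb{R}^d} \abs{y}^2\, p_{t,\omega}(0,y)\,dy = P_{0,\omega}(\abs{X_t}^2)$. Since $X_0 = 0$, we have $\abs{X_t} \leq X^*_t$, and the layer-cake formula gives
\[
P_{0,\omega}(\abs{X_t}^2) \leq \int_0^\infty 2R\, P_{0,\omega}(X^*_t \geq R)\,dR.
\]
Splitting this integral at $R = C_1 t$, the contribution from $R \leq C_1 t$ is trivially at most $(C_1 t)^2$, and the contribution from $R \geq C_1 t$ is bounded, after the change of variables $u = R - C_1 t$ and an application of Proposition \ref{e_localize}, by the Gaussian tail integral $\int_0^\infty 2(u+C_1 t) e^{-u^2/(C_1 t)}\,du = O(t^{3/2})$. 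Thus $P_{0,\omega}(\abs{X_t}^2) \leq C t^2$ for $t \geq 1$, with $C$ independent of $\omega$.

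The second step is the Gibbs inequality. Choosing the comparison Gaussian $q(y) = (2\pi t^2)^{-d/2}\exp(-\abs{y}^2/(2t^2))$, nonnegativity of the relative entropy $\int p_{t,\omega}(0,y)\log(p_{t,\omega}(0,y)/q(y))\,dy$ yields
\[
H_{t,\omega}(0) \leq -\int_{\mathbb{R}^d} p_{t,\omega}(0,y)\log q(y)\,dy = \frac{d}{2}\log(2\pi t^2) + \frac{1}{2t^2}\int_{\mathbb{R}^d} \abs{y}^2\, p_{t,\omega}(0,y)\,dy,
\]
and inserting the moment bound gives $H_{t,\omega}(0) \leq d\log(t) + C$ for $t \geq 1$, as required. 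The integrability required to form the relative entropy is ensured by the Gaussian upper bound (\ref{e_green}) (iterated via Chapman--Kolmogorov to cover $t \geq 1$). The only mild subtlety worth noting is that Proposition \ref{e_localize} yields the drift-driven bound $P_{0,\omega}(\abs{X_t}^2)=O(t^2)$ rather than the diffusive $O(t)$, but this is precisely the rate that produces the logarithmic growth on the right-hand side.
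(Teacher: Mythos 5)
Your proof is correct, but it takes a genuinely different route from the paper. The paper splits the entropy integral over the ball $B_{2C_1t^2}$ and its complement: on the ball it applies Jensen's inequality to the concave function $-x\log(x)$, which bounds that contribution by $\log$ of the ball's volume, i.e.\ $C(\log(t)+1)$; on the complement it first derives, via the Chapman--Kolmogorov identity $p_{t,\omega}(0,z)=\int p_{t-1,\omega}(0,y)p_{1,\omega}(y,z)\,dy$ together with Proposition \ref{e_localize} and the short-time bounds (\ref{e_green}), the pointwise exponential tail estimate $p_{t,\omega}(0,z)\leq C_2e^{-c_2\abs{z}}$ for $\abs{z}\geq 2C_1t^2$, which makes the far-field contribution uniformly bounded. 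You instead extract only the second moment $P_{0,\omega}(\abs{X_t}^2)\leq Ct^2$ from Proposition \ref{e_localize} and then invoke the Gibbs inequality (equivalently, that the Gaussian maximizes entropy among densities with prescribed second moment). Both arguments rest on the same localization input, Proposition \ref{e_localize}, and yield the same logarithmic rate; yours is somewhat more economical in that it dispenses with the pointwise tail bound entirely, while the paper's Jensen step is the quantitative form of the heuristic that entropy is at most the logarithm of the volume of the effective support. The one point to keep explicit is the measure-theoretic justification of the Gibbs step: the pointwise inequality $-p\log(p)\leq -p\log(q)+q-p$ integrates cleanly once one knows $\norm{p_{t,\omega}(0,\cdot)}_{L^\infty}$ is finite for $t\geq 1$ (so that the negative part of $-p\log(p)$ is integrable) and that $\int\abs{y}^2p_{t,\omega}(0,y)\,dy<\infty$; you note both, and both follow from (\ref{e_green}) via Chapman--Kolmogorov and from your moment bound, so the argument is complete.
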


\begin{proof}  Fix $\omega\in\Omega$ and $t\geq 2$.  We will use the fact that, for each $z\in\mathbb{R}^d$, $$p_{t,\omega}(0,z)=\int_{\mathbb{R}^d}p_{t-1,\omega}(0,y)p_{1,\omega}(y,z)\;dy.$$  For each $z\in\mathbb{R}^d$, $$p_{t,\omega}(0,z)=\int_{B_{\abs{z}/2}(z)}p_{t-1,\omega}(0,y)p_{1,\omega}(y,z)\;dy+\int_{\mathbb{R}^d\setminus B_{\abs{z}/2}(z)}p_{t-1,\omega}(0,y)p_{1,\omega}(y,z)\;dy,$$ and, therefore, for each $z\in\mathbb{R}^d$, \begin{multline*}0< p_{t,\omega}(0,z) \leq P_{0,\omega}\left(X^*_{t-1}\geq \abs{z}/2\right)\norm{p_{1,\omega}(y,z)}_{L^\infty(\mathbb{R}^d\times\mathbb{R}^d)} \\ +\norm{p_{1,\omega}(y,z)}_{L^\infty(\mathbb{R}^d\setminus B_{\abs{z}/2}(z))}\norm{p_{t-1,\omega}(0,y)}_{L^1(\mathbb{R}^d)}.\end{multline*}  Therefore, using (\ref{e_green}) and Proposition \ref{e_localize}, for $C_1>0$ as in Proposition \ref{e_localize}, for each $\abs{z}\geq 2C_1t^2$, for $C_2>0$ and $c_2>0$ independent of $\omega$ and $t\geq 2$, \begin{equation}\label{e_log_1}0< p_{t,\omega}(0,z)\leq C\left(e^{-\frac{(\abs{z}/2-C_1(t-1))^2_+}{C_1(t-1)}}+e^{-c\abs{z}^2/4}\right)\leq C_2e^{-c_2\abs{z}}.\end{equation}

Therefore, we write \begin{equation}\label{e_log_0}H_{t,\omega}(0)=\int_{B_{2C_1t^2}}-p_{t,\omega}(0,y)\log\left(p_{t,\omega}(0,y)\right)\;dy+\int_{\mathbb{R}^d\setminus B_{2C_1t^2}}-p_{t,\omega}(0,y)\log\left(p_{t,\omega}(0,y)\right)\;dy.\end{equation}  Using Jensen's inequality and the fact that the function $-x\log(x)$ is concave, $$\int_{B_{2C_1t^2}}-p_{t,\omega}(0,y)\log\left(p_{t,\omega}(0,y)\right)\;dy\leq -\int_{B_{2C_1t^2}}p_{t,\omega}(0,y)\;dy\log\left(\frac{1}{\abs{B_{2C_1t^2}}}\int_{B_{2C_1t^2}}p_{t,\omega}(0,y)\;dy\right).$$  Therefore, in view of Proposition \ref{e_localize}, there exists $C>0$ satisfying, for each $t\geq 2$, \begin{equation}\label{e_log_2}\int_{B_{2C_1t^2}}-p_{t,\omega}(0,y)\log\left(p_{t,\omega}(0,y)\right)\;dy\leq \log\left(C\abs{B_{2C_1t^2}}\right)\leq C\left(\log(t)+1\right).\end{equation}

Furthermore, there exists $\overline{t}\geq 2$ such that, whenever $t\geq \overline{t}$ and $\abs{z}\geq 2C_1t^2$, $$C_2e^{-c_2\abs{z}}\leq \frac{1}{e}.$$  Here, observe that the function $-x\log(x)$ achieves a maximum of $1/e$ at $x=1/e$.  We therefore conclude that, whenever $t\geq\overline{t}$, for $C>0$ independent of $t\geq\overline{t}$ and $\omega$, \begin{equation}\label{e_log_3}\int_{\mathbb{R}^d\setminus B_{2C_1t^2}}-p_{t,\omega}(0,y)\log\left(p_{t,\omega}(0,y)\right)\;dy\leq \int_{\mathbb{R}^d\setminus B_{2C_1t^2}}C_2\exp^{-c_2\abs{y}}\left(-\log\left(C_2\right)+c_2\abs{y}\right)\;dy\leq C.\end{equation}  Of course, these integrals decay to zero as $t$ approaches infinity, but we will not use this fact.

Therefore, in view of (\ref{e_log_0}), (\ref{e_log_2}) and (\ref{e_log_3}), for each $t\geq \overline{t}$, for $C>0$ independent of $t$, \begin{equation}\label{e_log_4}H_{t,\omega}(0)\leq C\left(\log(t)+1\right).\end{equation}  In order to conclude, we observe that the bounds appearing in (\ref{e_green}), taking $T=\overline{t}$, imply that there exists $C>0$ independent of $\omega$ and $1\leq t\leq \overline{t}$ such that, for each $1\leq t\leq \overline{t}$, \begin{equation}\label{e_log_5}H_{t,\omega}(0)\leq C(\log(t)+1).\end{equation}  In view of (\ref{e_log_4}) and (\ref{e_log_5}), since $\omega\in\Omega$ was arbitrary, this completes the argument.  \end{proof}

The final proposition of this section demonstrates the primary use of Proposition \ref{e_log}, as will be seen in the Section 5 to follow.

\begin{prop}\label{e_inf}  Assume (\ref{steady}).  There exists $C>0$ satisfying $$\liminf_{n\rightarrow\infty} n\mathbb{E}_\pi\left(H_{n,\omega}(0)-H_{n-1,\omega}(0)\right)\leq C.$$\end{prop}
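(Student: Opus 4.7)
My plan is to argue by contradiction using the uniform upper bound $H_{t,\omega}(0) \leq C_0(\log t + 1)$ supplied by Proposition \ref{e_log}. Since this bound is $\omega$-independent and $\pi$ is a probability measure, integration gives $\mathbb{E}_\pi H_{n,\omega}(0) \leq C_0(\log n + 1)$ for every $n \geq 1$. The heuristic is that if the increments $\mathbb{E}_\pi(H_n - H_{n-1})$ were eventually larger than $c/n$ for some $c > C_0$, the divergence of the harmonic series would force $\mathbb{E}_\pi H_n$ to grow faster than $C_0 \log n$, contradicting this upper bound. The target constant in the proposition should therefore be any value strictly greater than $C_0$.

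Before executing the contradiction I would verify that the telescoping identity $\mathbb{E}_\pi H_n - \mathbb{E}_\pi H_{N-1} = \sum_{k=N}^n \mathbb{E}_\pi(H_k - H_{k-1})$ is well-posed, which reduces to the finiteness of $\mathbb{E}_\pi H_n$ for each $n \geq 1$. The positive part is controlled by Proposition \ref{e_log}. For the negative part, the Gaussian upper bound (\ref{e_green}) provides, for each fixed $n$, an $\omega$-independent constant $C_n$ with $p_{n,\omega}(0,y) \leq C_n$ for all $y \in \mathbb{R}^d$, and hence the uniform pointwise lower bound $H_{n,\omega}(0) \geq -\log C_n$. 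So $\mathbb{E}_\pi H_n$ is finite for every $n$ and all terms in the telescoping identity are well-defined.

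Suppose for contradiction that $\liminf_{n\to\infty} n\,\mathbb{E}_\pi(H_n - H_{n-1}) > C_0$. By the definition of liminf, there exist $\varepsilon > 0$ and $N \geq 2$ such that $\mathbb{E}_\pi(H_n - H_{n-1}) \geq (C_0 + \varepsilon)/n$ for every $n \geq N$. Summing and using $\sum_{k=N}^n 1/k \geq \log n - \log N$ yields
$$\mathbb{E}_\pi H_n \;\geq\; \mathbb{E}_\pi H_{N-1} + (C_0 + \varepsilon)\bigl(\log n - \log N\bigr).$$
Combining with $\mathbb{E}_\pi H_n \leq C_0(\log n + 1)$ and dividing by $\log n$,
$$C_0 + \varepsilon \;\leq\; C_0 + \frac{C_0 - \mathbb{E}_\pi H_{N-1} + (C_0+\varepsilon)\log N}{\log n}.$$
Sending $n \to \infty$ forces $\varepsilon \leq 0$, a contradiction. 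This proves the proposition with any $C > C_0$, e.g.\ $C = C_0 + 1$.

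The proof is essentially elementary, and its only non-trivial ingredient is the uniform logarithmic upper bound of Proposition \ref{e_log}; the main obstacle is therefore the routine bookkeeping verifying that $\mathbb{E}_\pi H_n$ is finite and hence that the telescoping identity is meaningful.
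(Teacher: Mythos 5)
Your proof is correct and follows essentially the same route as the paper's: both deduce from Proposition \ref{e_log} that $\mathbb{E}_\pi H_{n,\omega}(0)$ grows at most logarithmically and then compare the telescoped increments against the harmonic series, the paper concluding that infinitely many increments are at most $C_3/m$ while you phrase the same dichotomy as a contradiction with the liminf. Your additional verification that $\mathbb{E}_\pi H_{n,\omega}(0)$ is finite (so the telescoping is legitimate) is a point the paper leaves implicit, and is correctly handled via the Gaussian upper bound on the kernel.
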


\begin{proof}  In view of Proposition \ref{e_log}, for each $n\geq 1$, for $C_2>0$ independent of $n$, \begin{equation}\label{e_inf_1}\mathbb{E}_\pi\left(H_{n,\omega}(0)\right)\leq C_2\left(\log(n)+1\right).\end{equation}  Choose $C_3>0$ such that, for each $n\geq 1$, $$C_2\left(\log(n)+1\right)+\mathbb{E}_\pi\left(H_{1,\omega}(0)\right)\leq C_3\sum_{m=1}^n\frac{1}{m}.$$  Using (\ref{e_inf_1}), we have, for each $n\geq 2$, $$\sum_{m=2}^n\mathbb{E}_\pi\left(H_{m,\omega}(0)-H_{m-1,\omega}(0)\right)+\mathbb{E}_\pi\left(H_{1,\omega}(0)\right)\leq C_3\sum_{m=1}^n\frac{1}{m},$$ and, therefore, for each $n\geq 2$, for $C>0$ independent of $n$, $$\sum_{m=2}^n\left(\mathbb{E}_\pi\left(H_{m,\omega}(0)-H_{m-1,\omega}(0)\right)-\frac{C_3}{m}\right)\leq -\mathbb{E}_\pi\left(H_{1,\omega}(0)\right)\leq C.$$  Proceeding by contradiction, this implies that, for infinitely many $m\geq 2$, $$\mathbb{E}_\pi\left(H_{m,\omega}(0)-H_{m-1,\omega}(0)\right)\leq \frac{C_3}{m},$$ which completes the argument.  \end{proof}  

\section{A Liouville Property for Strictly Sub-linear Solutions}

We now complete the proof of Theorem \ref{i_main} by proving that, on a subset of full probability, the constant functions are the only strictly sub-linear solutions $w:\mathbb{R}^d\rightarrow\mathbb{R}$ satisfying \begin{equation}\label{s_eq}-\frac{1}{2}\tr(A(x,\omega)D^2w)+b(x,\omega)\cdot Dw=0\;\;\textrm{on}\;\;\mathbb{R}^d,\end{equation} where we say that a strictly sub-linear $w:\mathbb{R}^d\rightarrow\mathbb{R}$ satisfies (\ref{s_eq}) if, for each $x\in\mathbb{R}^d$ and $t\geq 0$, \begin{equation}\label{s_sol} w(x)=\int_{\mathbb{R}^d}p_{t,\omega}(x,y)w(y)\;dy=P_{x,\omega}\left(w(X_t)\right).\end{equation}  The following argument is motivated by the analogous fact presented for the discrete setting in \cite{BDKY}.

\begin{thm}\label{s_main}  Assume (\ref{steady}) and (\ref{constants}).  On a subset of full probability, the constant functions are the the only strictly sub-linear $w:\mathbb{R}^d\rightarrow\mathbb{R}$ satisfying (\ref{s_sol}).\end{thm}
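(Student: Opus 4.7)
My plan is to adapt the entropy-based Liouville argument of \cite{BDKY} to the continuous setting by combining the almost-sure diffusivity control of Theorem \ref{d_diffusive}, the entropy estimates of Section 4, and the invariant measure $\pi$ of Theorem \ref{p_invariant}. Fix $\omega$ in the intersection of the relevant full-probability sets, let $w$ be a strictly sub-linear solution of (\ref{s_sol}), and normalize so that $w(0)=0$; the goal is to show $w\equiv 0$. The central object is
$$V_\omega(t) := P_{0,\omega}\bigl(w(X_t)^2\bigr),$$
which, using (\ref{s_sol}) together with the Markov property and the harmonicity relation $E_{y,\omega}[w(X_{t-s})]=w(y)$, satisfies the variance identity
$$V_\omega(t)-V_\omega(s)=P_{0,\omega}\bigl[(w(X_t)-w(X_s))^2\bigr]\geq 0,\qquad 0\leq s\leq t.$$
In particular $V_\omega$ is non-decreasing and, for integer $n\geq 1$,
$$V_\omega(n)=\sum_{k=0}^{n-1}P_{0,\omega}[\Delta_\omega(X_k)],\qquad \Delta_\omega(x):=E_{x,\omega}\bigl[(w(X_1)-w(x))^2\bigr]\geq 0.$$

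First I would establish the sub-linear upper bound $V_\omega(t)=o(t)$. Strict sub-linearity provides, for each $\epsilon>0$, an $R_\epsilon>0$ with $|w(y)|\leq\epsilon|y|$ for $|y|\geq R_\epsilon$, so that
$$V_\omega(t)\leq\|w\|_{L^\infty(B_{R_\epsilon})}^2+\epsilon^2 P_{0,\omega}(|X_t|^2),$$
and Theorem \ref{d_diffusive} bounds the second term by $C\epsilon^2 t$ on $\Omega_1$; letting $t\to\infty$ and then $\epsilon\to 0$ gives the claim.

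Next I would derive a matching linear lower bound on $V_\omega$ under the assumption that $w$ is non-constant. The natural route is to view $\Delta_\omega(x)$ as a stationary function of the environment via $\Delta_\omega(x)=\tilde\Delta(\tau_x\omega)$ with $\tilde\Delta(\omega):=\Delta_\omega(0)$, since $w(\cdot+x)-w(x)$ is a strictly sub-linear solution of the corresponding equation for $\tau_x\omega$. Invariance of $\pi$ under the environment process (Theorem \ref{p_invariant}) would then yield $\mathbb{E}_\pi P_{0,\omega}[\Delta_\omega(X_k)]=\mathbb{E}_\pi\tilde\Delta$ for each $k\geq 0$, hence
$$\mathbb{E}_\pi V_\omega(n)=n\,\mathbb{E}_\pi\tilde\Delta.$$
Combined with $V_\omega(t)=o(t)$ this forces $\mathbb{E}_\pi\tilde\Delta=0$, so $\tilde\Delta\equiv 0$ $\pi$-a.s., and by mutual absolute continuity of $\pi$ and $\mathbb{P}$ also $\mathbb{P}$-a.s.\ for every $x$. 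One-step stationarity $w(X_1)=w(x)$ on a full-probability set, together with continuity of $w$ and the positivity of the Green's functions, would then force $w$ to be constant.

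The hard part will be the stationarity identification in the third paragraph: treating $\Delta_\omega$ as a stationary function of $\omega$ requires identifying the shift $w(\cdot+x)-w(x)$ with a \emph{canonical} sub-linear solution for $\tau_x\omega$, which is essentially the uniqueness statement we are trying to prove. Following \cite{BDKY}, I would avoid this circularity by invoking Proposition \ref{e_inf} to extract a subsequence $n_k\to\infty$ with $\mathbb{E}_\pi[H_{n_k,\omega}(0)-H_{n_k-1,\omega}(0)]\leq C/n_k$, and then bound the $\pi$-mean of the one-step quadratic variation of $w$ by a relative-entropy / Cauchy--Schwarz comparison involving this entropy increment. This bounds $\mathbb{E}_\pi V_\omega(n_k)$ by $o(n_k)$ without any prior stationary selection of correctors; together with the variance identity and the diffusive upper bound it forces $\tilde\Delta\equiv 0$ on a set of full $\pi$-measure. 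Making this continuous-time, non-reversible analogue of the BDKY entropy argument quantitative, in a form compatible with the unbounded state space $\mathbb{R}^d$ and with the Green's function estimates (\ref{e_green}), is where I expect the bulk of the technical effort.
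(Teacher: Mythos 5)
Your plan is, in its final form, the same argument the paper gives: the three external inputs (Theorem \ref{d_diffusive} for the diffusivity on $\Omega_1$, Proposition \ref{e_inf} for the entropy increments, Theorem \ref{p_invariant} plus Fatou to convert the $\pi$-averaged entropy bound into an almost sure $\liminf$ bound) are exactly the ones the paper combines, and you correctly recognize and sidestep the circularity in trying to realize $\Delta_\omega(x)$ as a stationary functional of $\tau_x\omega$. Two comments. First, the detour through $V_\omega(t)$ and the telescoping $V_\omega(n)=\sum_k P_{0,\omega}[\Delta_\omega(X_k)]$ is not needed: the paper never forms a variance process, but instead estimates the one-step oscillation $\int |w(0)-w(y)|\,p_{1,\omega}(0,y)\,dy$ directly and concludes from its vanishing, together with the equivalence of $p_{1,\omega}(0,\cdot)\,dy$ with Lebesgue measure and the continuity of $w$, that $w\equiv w(0)$. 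Your paragraph 2 (the bound $\frac1n P_{0,\omega}(w^2(X_n))\le \frac{C}{n}+\epsilon^2\overline{\alpha}d+o(1)$ from strict sub-linearity) survives unchanged as the second factor in the paper's Cauchy--Schwarz.

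Second, the step you defer as ``the bulk of the technical effort'' is the entire content of the proof, and you should be aware that it is a direct transcription of the BDKY computation with no additional continuum difficulty (that difficulty was already absorbed into Sections 3 and 4). Concretely: writing $w(0)-w(y)=\int(p_{n,\omega}(0,z)-p_{n-1,\omega}(y,z))w(z)\,dz$ via Chapman--Kolmogorov and (\ref{s_sol}), one applies Cauchy--Schwarz with weight $p_{n,\omega}(0,z)+p_{n-1,\omega}(y,z)$, bounds the resulting $\chi^2$-type divergence by the Bregman-type pointwise inequality $2x\log x\ge 2(x-1)+\frac{(x-1)^2}{x+1}$ applied to the ratio of the two kernels, and then integrates in $y$ against $p_{1,\omega}(0,y)$ and applies Cauchy--Schwarz once more to arrive at
$$\int_{\mathbb{R}^d}|w(0)-w(y)|\,p_{1,\omega}(0,y)\,dy\le 2\Bigl(n\bigl(H_{n,\omega}(0)-P_{0,\omega}(H_{n-1,\tau_{X_1}\omega}(0))\bigr)\Bigr)^{1/2}\Bigl(\tfrac1n P_{0,\omega}(w^2(X_n))\Bigr)^{1/2},$$
where the nonnegativity of the entropy increment (concavity of $-x\log x$ under the Chapman--Kolmogorov disintegration) is what legitimizes both the $\chi^2$ bound and the later use of Fatou. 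Until this chain is written out, your proposal establishes only the upper bound $V_\omega(t)=o(t)$ and the reduction of the theorem to ``one-step oscillation vanishes,'' so as it stands there is a genuine gap, albeit one you have located precisely and for which the cited reference supplies the template.
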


\begin{proof}  Fix $\omega\in\Omega$ and a strictly sub-linear $w:\mathbb{R}^d\rightarrow\mathbb{R}$ satisfying (\ref{s_sol}).  We begin with a preliminary computation.  For each $y\in\mathbb{R}^d$ and integer $n\geq 2$, $$\abs{w(0)-w(y)}=\left|\int_{\mathbb{R}^d}\left(p_{n,\omega}(0,z)-p_{n-1,\omega}(y,z)\right)w(z)\;dz\right|.$$  H\"older's inequality implies that, for each $y\in\mathbb{R}^d$, \begin{multline*}\abs{w(0)-w(y)}\leq  \\ \left(\int_{\mathbb{R}^d}\frac{\left(p_{n,\omega}(0,z)-p_{n-1,\omega}(y,z)\right)^2}{p_{n,\omega}(0,z)+p_{n-1,\omega}(y,z)}\;dz\right)^{1/2}\left(\int_{\mathbb{R}^d}w^2(z)(p_{n,\omega}(0,z)+p_{n-1,\omega}(y,z))\;dz\right)^{1/2}.\end{multline*}

In what follows, we use the fact that, after a Taylor expansion at $x=1$, for each $x>0$, $$2x\log(x)=2(x-1)+(x-1)^2-\int_1^x\frac{(x-s)^2}{s^2}\;ds\geq 2x-2+\frac{(x-1)^2}{x+1}.$$  This implies that, after suppressing the dependence on $z\in\mathbb{R}^d$, for each $y\in\mathbb{R}^d$ and integer $n\geq 2$, \begin{multline*}\left(\int_{\mathbb{R}^d}\frac{\left(p_{n,\omega}(0)-p_{n-1,\omega}(y)\right)^2}{p_{n,\omega}(0)+p_{n-1,\omega}(y)}\;dz\right)^{1/2}\leq \\ \left(\int_{\mathbb{R}^d}2p_{n-1,\omega}(y)\log(p_{n-1,\omega}(y))-2p_{n-1,\omega}(y)\log(p_{n,\omega}(0))-2p_{n-1,\omega}(y)+2p_{n,\omega}(0)\;dz\right)^{1/2}.\end{multline*}  And, therefore, after another application of H\"older's inequality, using the fact that, for each $z\in\mathbb{R}^d$, $$\int_{\mathbb{R}^d}p_{1,\omega}(0,y)p_{n-1,\omega}(y,z)\;dy=p_{n,\omega}(0,z),$$ we have, for each integer $n\geq 2$, \begin{multline*}\int_{\mathbb{R}^d}\abs{w(0)-w(y)}p_{1,\omega}(0,y)\;dy \leq \\ 2\left(H_{n,\omega}(0)-\int_{\mathbb{R}^d}p_{1,\omega}(0,y)H_{n-1,\omega}(y)\;dy\right)^{1/2}\left(\int_{\mathbb{R}^d}w^2(z)p_{n,\omega}(0,z)\;dz\right)^{1/2}.\end{multline*}  And, therefore, for each integer $n\geq 2$, using the fact that (\ref{stationary}) implies, for each $y\in\mathbb{R}^d$, $$H_{n-1,\omega}(y)=H_{n-1,\tau_y\omega}(0),$$ we have \begin{multline*}\int_{\mathbb{R}^d}\abs{w(0)-w(y)}p_{1,\omega}(0,y)\;dy \leq \\ 2\left(n\left(H_{n,\omega}(0)-P_{0,\omega}(H_{n-1,\tau_{X_1}\omega}(0))\right)\right)^{1/2}\left(\frac{1}{n}P_{0,\omega}\left(w^2(X_n)\right)\right)^{1/2}.\end{multline*}

Fix $\epsilon>0$.  The strict sub-linearity of $w$ implies that there exists $C=C(w,\epsilon)>0$ such that, for all $z\in\mathbb{R}^d$, $$w^2(z)\leq C+\epsilon\abs{z}^2.$$  Therefore, \begin{multline}\label{s_main_1}\int_{\mathbb{R}^d}\abs{w(0)-w(y)}p_{1,\omega}(0,y)\;dy \leq \\ \liminf_{n\rightarrow\infty} 2\left(n\left(H_{n,\omega}(0)-P_{0,\omega}(H_{n-1,\tau_{X_1}\omega}(0))\right)\right)^{1/2}\left(\frac{1}{n}P_{0,\omega}\left(\epsilon\abs{X_n}^2+C\right)\right)^{1/2}.\end{multline}

We observe that, in view of Proposition \ref{d_main}, for each $\omega\in\Omega_1$, for every $C>0$ and $\epsilon>0$, \begin{equation}\label{s_main_2}\lim_{n\rightarrow\infty}\left(\frac{1}{n}P_{0,\omega}\left(\epsilon\abs{X_n}^2+C\right)\right)^{1/2}=(\epsilon\overline{\alpha}d)^{1/2},\end{equation} and, using Fatou's lemma, (\ref{p_invariant_1}) and Proposition \ref{e_inf}, \begin{multline*}\mathbb{E}_\pi\left(\liminf_{n\rightarrow\infty}\left(n\left(H_{n,\omega}(0)-P_{0,\omega}(H_{n-1,\tau_{X_1}\omega}(0))\right)\right)\right)\leq \\ \liminf_{n\rightarrow\infty}\mathbb{E}_\pi\left(n\left(H_{n,\omega}(0)-P_{0,\omega}(H_{n-1,\tau_{X_1}\omega}(0))\right)\right)=\liminf_{n\rightarrow\infty}\mathbb{E}_\pi\left(n\left(H_{n,\omega}(0)-H_{n-1,\omega}(0)\right)\right)\leq C.\end{multline*}  This implies that there exists a subset $\Omega_2\subset\Omega$ of full probability such that, for each $\omega\in\Omega_2$ there exists $C(\omega)>0$ satisfying \begin{equation}\label{s_main_3}\liminf_{n\rightarrow\infty}\left(n\left(H_{n,\omega}(0)-P_{0,\omega}(H_{n-1,\tau_{X_1}\omega}(0))\right)\right)\leq C.\end{equation}

To conclude, we define the subset of full probability $$\Omega_3=\Omega_1\cap\Omega_2,$$ and observe that, whenever $\omega\in\Omega_3$ and $w:\mathbb{R}^d\rightarrow\mathbb{R}$ is strictly sub-linear and satisfies (\ref{s_sol}), we have $$\int_{\mathbb{R}^d}\abs{w(0)-w(y)}p_{1,\omega}(0,y)\;dy=0.$$  Since, for each $\omega\in\Omega$,  $p_{1,\omega}(0,y)\;dy$ is equivalent to Lebesgue measure, see \cite{Fr}, and since (\ref{s_sol}) implies that $w\in \C(\mathbb{R}^d)$, we have, for each $y\in\mathbb{R}^d$, $$w(y)=w(0).$$  This, since $\omega\in\Omega_3$ and $w$ satisfying (\ref{s_sol}) were arbitrary, completes the argument.  \end{proof}

\section{A Liouville Property for Bounded, Ancient Solutions}

We are now recall some aspects of \cite{F1} in order to prove Theorem \ref{i_ancient}, which states that, on a subset of full probability, the constant functions are the only bounded $w:\mathbb{R}^d\times(-\infty,\infty)\rightarrow\mathbb{R}$ satisfying \begin{equation}\label{a_eq} -\frac{1}{2}\tr(A(x,\omega)D^2w)+b(x,\omega)\cdot Dw=0\;\;\textrm{on}\;\;\mathbb{R}^d\times(-\infty,\infty).\end{equation}  Recall that we say a bounded $w:\mathbb{R}^d\times(-\infty,\infty)\rightarrow\mathbb{R}$ satisfies (\ref{a_eq}) if, for each $x\in\mathbb{R}^d$, $t\in(-\infty,\infty)$ and $s\geq 0$, \begin{equation}\label{a_sol} w(x,t+s)=\int_{\mathbb{R}^d}p_{s,\omega}(x,y)w(y,t)\;dy=P_{x,\omega}\left(w(X_s,t)\right).\end{equation}

In what follows, we will prove that, with high probability, the operators $R_{n+1}$ defined in (\ref{d_op}) are effectively averaged, or regularized, versions of the operators $R_n$.  Precisely, we show that, with probability approaching one as $n$ approaches infinity, the operator $$R_{n+1}\;\;\textrm{may be effectively compared with the operator}\;\;\overline{R}_n^{\ell_n^2-6}R_n^6.$$   We now define the subset on which we will obtain this comparison.

As before, it is necessary to obtain Controls \ref{Holder} and \ref{localization} on a large portion of space.  Define, for each $n\geq 0$, $$\tilde{F}_n=\left\{\;\omega\in\Omega\;|\;\omega\in B_n(x)\;\textrm{for all}\;x\in L_n\mathbb{Z}^d\cap[-2L_{n+2}^2, 2L_{n+2}^2]^d.\;\right\},$$ and, for each $n\geq 0$, \begin{equation}\label{o_subset} F_n=\tilde{F_n}\cap \tilde{F}_{n+1}\cap \tilde{F}_{n+2}.\end{equation}  The following proposition provides, for each $n\geq 0$, a lower bound for the probability of $F_n$.

\begin{prop}\label{a_probability}  Assume (\ref{steady}) and (\ref{constants}).  For each $n\geq 0$, for $C>0$ independent of $n$, $$\mathbb{P}(\Omega\setminus F_n)\leq CL_n^{(2(1+a)^2-1)d-M_0}.$$\end{prop}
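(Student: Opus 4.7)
The plan is to prove the estimate by a union bound over the grid points, using Theorem \ref{induction} together with the stationarity identity (\ref{mainevent1}) on each individual event $B_j(x)$, and then showing that among the three contributions $j = n, n+1, n+2$ to $F_n$, the term from $\tilde{F}_n$ dominates.

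First I would estimate $\mathbb{P}(\Omega \setminus \tilde{F}_j)$ for $j \in \{n, n+1, n+2\}$. A union bound and (\ref{mainevent1}) yield
\[
\mathbb{P}(\Omega \setminus \tilde{F}_j) \leq \#\bigl(L_j \mathbb{Z}^d \cap [-2L_{n+2}^2, 2L_{n+2}^2]^d\bigr) \cdot \mathbb{P}(\Omega \setminus B_j(0)),
\]
and Theorem \ref{induction} gives $\mathbb{P}(\Omega \setminus B_j(0)) \leq L_j^{-M_0}$. The cardinality of the grid is bounded by $C(L_{n+2}^2/L_j)^d$, so
\[
\mathbb{P}(\Omega \setminus \tilde{F}_j) \leq C \bigl(L_{n+2}^2/L_j\bigr)^d L_j^{-M_0}.
\]

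Next I would express everything in terms of $L_n$ using the recursion (\ref{L}), which yields $L_{n+2} \leq C L_n^{(1+a)^2}$ and hence $L_{n+2}^2 \leq C L_n^{2(1+a)^2}$. For $j = n$ this gives the target bound
\[
\mathbb{P}(\Omega \setminus \tilde{F}_n) \leq C L_n^{(2(1+a)^2 - 1)d - M_0}.
\]
For $j = n+1$ and $j = n+2$ the same substitution, together with $L_{n+1} \geq c L_n^{1+a}$, yields exponents $(1+a)((1+2a)d - M_0)$ and $(1+a)^2(d - M_0)$ respectively (in the variable $\log L_n$), both of which are strictly smaller than $(2(1+a)^2 - 1)d - M_0$; the difference in each case reduces to $a(d + M_0) > 0$ or a larger positive quantity, using that $a > 0$ and $M_0 > 0$ by (\ref{Holderexponent}) and (\ref{delta}).

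Finally I would combine these three estimates via a union bound on $\Omega \setminus F_n = \bigcup_{j=n}^{n+2}(\Omega \setminus \tilde{F}_j)$ to conclude. There is no genuine obstacle here; the argument is purely combinatorial bookkeeping. The only care required is in the exponent comparison showing that the $j = n$ term dominates — since the grid for $\tilde{F}_n$ has many more points (of spacing $L_n$) over the same box $[-2L_{n+2}^2, 2L_{n+2}^2]^d$ than the grids for $\tilde{F}_{n+1}$ and $\tilde{F}_{n+2}$, and this gain more than offsets the slightly better single-event probability $L_{n+1}^{-M_0}$ or $L_{n+2}^{-M_0}$ at higher levels.
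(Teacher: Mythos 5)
Your proposal is correct and follows essentially the same route as the paper: a union bound over the lattice points combined with (\ref{mainevent1}) and Theorem \ref{induction}, followed by summing the three levels $j=n,n+1,n+2$. One small inaccuracy: $\tilde{F}_j$ is defined over the box $[-2L_{j+2}^2,2L_{j+2}^2]^d$ rather than $[-2L_{n+2}^2,2L_{n+2}^2]^d$, so for $j=n+1,n+2$ your grid count is an undercount; with the correct boxes each level still satisfies $\mathbb{P}(\Omega\setminus\tilde{F}_j)\leq CL_j^{(2(1+a)^2-1)d-M_0}$, and since this exponent is negative by (\ref{Holderexponent}) and (\ref{delta}), the $j=n$ term dominates exactly as you conclude.
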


\begin{proof}  In view of (\ref{mainevent1}), for each $n\geq 0$, for $C>0$ independent of $n$, $$\mathbb{P}(\Omega\setminus \tilde{F}_n)\leq \sum_{x\in L_n\mathbb{Z}^d\cap[-2L_{n+2}^2, 2L_{n+2}^2]^d}\mathbb{P}\left(\Omega\setminus B_n(x)\right)\leq C\left(L_{n+2}^2/L_n\right)^d\mathbb{P}\left(\Omega\setminus B_n(0)\right).$$  Therefore, using Theorem \ref{induction}, for each $n\geq 0$, for $C>0$ independent of $n$, $$\mathbb{P}(\Omega\setminus \tilde{F}_n)\leq CL_n^{(2(1+a)^2-1)d-M_0}.$$  This implies that, for each $n\geq 0$, \begin{multline*}\mathbb{P}\left(\Omega\setminus F_n\right)\leq \mathbb{P}\left(\Omega\setminus \tilde{F}_n\right)+\mathbb{P}\left(\Omega\setminus\tilde{F}_{n+1}\right)+\mathbb{P}\left(\Omega\setminus \tilde{F}_{n+2}\right) \\ \leq C\left(L_n^{(2(1+a)^2-1)d-M_0}+L_{n+1}^{(2(1+a)^2-1)d-M_0}+L_{n+2}^{(2(1+a)^2-1)d-M_0}\right)\leq CL_n^{(2(1+a)^2-1)d-M_0},\end{multline*} which completes the argument.  \end{proof}

We remark that, in view of (\ref{Holderexponent}) and (\ref{delta}), the exponent $(2(1+a)^2-1)d-M_0<0$.  Also, we remark that to simplify the notation in the definition of $\tilde{F}_n$, we obtain Controls \ref{Holder} and \ref{localization} on a somewhat larger portion of space than is strictly necessary for the argument to follow.

Before proceeding with the primary argument, we make an elementary observation concerning the regularizing properties of the kernels $\overline{R}_n$.  Notice the role of Theorem \ref{effectivediffusivity} in what follows.  Since we have a lower bound for the $\alpha_n$, we obtain an estimate uniform for $n\geq 0$.

\begin{prop}\label{a_regular}  Assume (\ref{steady}) and (\ref{constants}).  There exists $C>0$ satisfying, for each $n\geq 0$ and $f\in L^\infty(\mathbb{R}^d)$, $$\abs{\overline{R}_nf}_n\leq C\norm{f}_{L^\infty(\mathbb{R}^d)}.$$\end{prop}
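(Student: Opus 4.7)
The plan is to prove the bound by separately estimating the supremum and the rescaled Hölder semi-norm of $\overline{R}_n f$, exploiting the smoothing property of the Gaussian heat kernel with diffusivity $\alpha_n$.

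First, I would note that $\overline{R}_n f$ is a convolution of $f$ with a Gaussian of unit mass, so $\|\overline{R}_n f\|_{L^\infty(\mathbb{R}^d)}\leq \|f\|_{L^\infty(\mathbb{R}^d)}$; this handles the supremum part of $|\overline{R}_nf|_n$. For the Hölder part, I would derive a gradient estimate by differentiating the Gaussian representation
$$\overline{R}_n f(x)=\int_{\mathbb{R}^d}(4\pi\alpha_n L_n^2)^{-d/2}e^{-|y|^2/(4\alpha_n L_n^2)}f(x+y)\,dy$$
under the integral sign. The usual computation of $\int |z|e^{-|z|^2}\,dz$ yields a pointwise bound of the form
$$|D\overline{R}_nf(x)|\leq C(\alpha_n L_n^2)^{-1/2}\|f\|_{L^\infty(\mathbb{R}^d)}.$$
Here is the one place the paper's earlier results enter: Theorem \ref{effectivediffusivity} guarantees $\alpha_n\geq 1/(2\nu)$ uniformly in $n$, so $(\alpha_n L_n^2)^{-1/2}\leq C L_n^{-1}$ with a constant depending only on $\nu$, giving $|D\overline{R}_n f(x)|\leq CL_n^{-1}\|f\|_{L^\infty(\mathbb{R}^d)}$.

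Next I would interpolate to extract the $\beta$-Hölder seminorm. For $x,y\in\mathbb{R}^d$ with $|x-y|\leq L_n$, the gradient estimate and the mean value theorem give
$$|\overline{R}_nf(x)-\overline{R}_nf(y)|\leq CL_n^{-1}\|f\|_{L^\infty(\mathbb{R}^d)}|x-y|\leq CL_n^{-\beta}\|f\|_{L^\infty(\mathbb{R}^d)}|x-y|^\beta,$$
using $|x-y|^{1-\beta}\leq L_n^{1-\beta}$ and $\beta\in(0,1/2]$. For $|x-y|>L_n$, the trivial bound from the $L^\infty$ estimate yields
$$|\overline{R}_nf(x)-\overline{R}_nf(y)|\leq 2\|f\|_{L^\infty(\mathbb{R}^d)}\leq 2L_n^{-\beta}\|f\|_{L^\infty(\mathbb{R}^d)}|x-y|^\beta.$$
In either regime, multiplying by $L_n^\beta$ and taking the supremum over $x\neq y$ yields the Hölder semi-norm bound $C\|f\|_{L^\infty(\mathbb{R}^d)}$, which combined with the supremum bound gives the claim.

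There is no real obstacle here: the proof is a direct computation for the Gaussian convolution combined with the uniform lower bound on $\alpha_n$ from Theorem \ref{effectivediffusivity}. The only subtlety worth emphasizing is that the $n$-independence of the constant genuinely requires the uniform ellipticity of the limiting diffusivity, since without the lower bound $\alpha_n\geq 1/(2\nu)$ the gradient estimate would only produce $(\alpha_n L_n^2)^{-1/2}$ rather than a constant multiple of $L_n^{-1}$, and the scaling of $|\cdot|_n$ at length $L_n$ is precisely what absorbs this factor.
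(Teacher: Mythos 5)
Your proof is correct and follows essentially the same route as the paper: the unit-mass Gaussian gives the sup bound, the explicit gradient estimate combined with the uniform lower bound $\alpha_n\geq 1/(2\nu)$ from Theorem \ref{effectivediffusivity} gives $\abs{D\overline{R}_nf}\leq CL_n^{-1}\norm{f}_{L^\infty(\mathbb{R}^d)}$, and the case split at $\abs{x-y}=L_n$ controls the rescaled H\"older semi-norm. Nothing is missing.
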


\begin{proof}  Fix $n\geq 0$ and $f\in L^\infty(\mathbb{R}^d)$.  For each $x\in\mathbb{R}^d$, $$\overline{R}_nf(x)=\int_{\mathbb{R}^d}(4\pi \alpha_n L_n^2)^{-d/2}e^{-\abs{x-y}^2/4\alpha_n L_n^2}f(y)\;dy.$$  Therefore, \begin{equation}\label{o_regular_1}\norm{\overline{R}_nf(x)}_{L^\infty(\mathbb{R}^d)}\leq\norm{f}_{L^\infty(\mathbb{R}^d)}.\end{equation}  It remains to bound the H\"older semi-norm.

For each $x\in\mathbb{R}^d$, $$D\overline{R}_nf(x)=\pi^{-d/2}(4\alpha_n L_n^2)^{-1/2}\int_{\mathbb{R}^d}\frac{x-y}{(4\alpha_n L_n^2)^{(d+1)/2}}e^{-\abs{x-y}^2/4\alpha_n L_n^2}f(y)\;dy.$$  Therefore, in view of Theorem \ref{effectivediffusivity}, for each $x\in\mathbb{R}^d$, for $C>0$ independent of $n\geq 0$ and $f\in L^\infty(\mathbb{R}^d)$, $$\abs{D\overline{R}_nf(x)}=\left|\pi^{-d/2}(4\alpha_n L_n^2)^{-1/2}\int_{\mathbb{R}^d} ye^{-\abs{y}^2}f\left(4\alpha_nL_n^2)^{1/2}y+x\right)\;dy\right|\leq CL_n^{-1}\norm{f}_{L^\infty(\mathbb{R}^d)}.$$  So, whenever $x,y\in\mathbb{R}^d$ satisfy $0<\abs{x-y}<L_n$, \begin{equation}\label{o_regular_2} L_n^\beta\frac{\abs{\overline{R}_nf(x)-\overline{R}_nf(y)}}{\abs{x-y}^\beta}\leq CL_n^{\beta-1}\norm{f}_{L^\infty(\mathbb{R}^d)}\abs{x-y}^{1-\beta}\leq \norm{f}_{L^\infty(\mathbb{R}^d)}.\end{equation}  And, in view of (\ref{o_regular_1}), if $\abs{x-y}\geq L_n$, \begin{equation}\label{o_regular_3}  L_n^\beta\frac{\abs{\overline{R}_nf(x)-\overline{R}_nf(y)}}{\abs{x-y}^\beta}\leq 2\norm{f}_{L^\infty(\mathbb{R}^d)}.\end{equation}  The claim follows from (\ref{o_regular_1}), (\ref{o_regular_2}) and (\ref{o_regular_3}).  \end{proof}

The following proposition, which is a somewhat simplified version of Proposition 3.9 appearing in \cite{F1},  provides, on a large portion of space, on the subset $F_n$ defined in (\ref{o_subset}), an effective comparison between the operators $$R_{n+1}\;\;\textrm{and}\;\;\overline{R}_n^{\ell_n^2-6}R_n^6.$$  Notice that the estimates contained in the following proposition depend on the unscaled, $\beta$-H\"older norm of the initial data.  This is not an issue since it is well-known, and shown in Proposition \ref{a_weak} below, that any bounded function $w:\mathbb{R}^d\times(-\infty,\infty)\rightarrow\mathbb{R}$ satisfying (\ref{a_sol}) is H\"older continuous in space, uniformly in time.

\begin{prop}\label{a_main}  Assume (\ref{steady}) and (\ref{constants}).  For each $n\geq 0$, $\omega\in F_n$ and $f\in C^{0,\beta}(\mathbb{R}^d)$, for $C>0$ independent of $n$, $$\sup_{x\in B_{\tilde{D}_{n+1}}}\abs{R_{n+1}f(x,\omega)-\left(\overline{R}_n\right)^{\ell_n^2-6}\left(R_n\right)^6f(x,\omega)}\leq CL_n^{\beta-7(\delta-2a)}\norm{f}_{C^{0,\beta}(\mathbb{R}^d)}.$$\end{prop}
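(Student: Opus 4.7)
The starting point is the semigroup identity $R_{n+1} = R_n^{\ell_n^2}$, which follows from the Markov property and the time-independence of the coefficients in (\ref{d_eq}); the goal therefore reduces to comparing $R_n^{\ell_n^2}$ with $\overline{R}_n^{\ell_n^2-6} R_n^6$ on $B_{\tilde{D}_{n+1}}$. My plan is to write
$$R_n^{\ell_n^2} f - \overline{R}_n^{\ell_n^2-6} R_n^6 f = \bigl(R_n^{\ell_n^2-6} - \overline{R}_n^{\ell_n^2-6}\bigr) R_n^6 f,$$
expand the first $\ell_n^2-6$ copies of $R_n$ as $R_n = \overline{R}_n + S_n$, and collect terms by the number $m\geq 1$ of $S_n$ factors; this produces exactly the multinomial expansion used in equation (\ref{d_main_8}). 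Each resulting term has the form $\overline{R}_n^{s_0} S_n \overline{R}_n^{s_1} S_n \cdots S_n \overline{R}_n^{s_m} R_n^6 f$ with $s_0 + \cdots + s_m + m = \ell_n^2 - 6$. Between successive operators I would insert the cutoffs $\tilde{\chi}_{n+1}$ and the $\chi_{n,x}$ from (\ref{cutoff1}), so that Control \ref{Holder} (equivalently Control \ref{d_Holdercontrol}) is applicable at each use of $S_n$; the errors introduced by these cutoffs decay exponentially on $F_n$ by Control \ref{localization} and Proposition \ref{local11}.

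Each of the $m$ factors of $S_n$ contributes $L_n^{-\delta}$ by Control \ref{Holder}, while the intervening powers of $\overline{R}_n$ do not grow the $\abs{\cdot}_n$ norm by Proposition \ref{d_contract}. Applying Proposition \ref{prelim_extension} (using the fact that $F_n \subseteq \tilde{F}_n \cap \tilde{F}_{n+1} \cap \tilde{F}_{n+2}$ gives the needed controls at every relevant grid point) together with Proposition \ref{prelim_product} to handle the cutoff factors, each term in the expansion is bounded by $L_n^{-m\delta}\abs{R_n^6 f}_n$. Since $\ell_n^2 - 6 \leq CL_n^{2a}$, the number of terms with $m$ copies of $S_n$ is bounded by $(CL_n^{2a})^m/m!$. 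The tail $R_n^6$ is included so that $\abs{R_n^6 f}_n$ can be estimated by a bootstrap: the pure $\overline{R}_n^6$ contribution is handled by Proposition \ref{a_regular} (which converts $L^\infty$ bounds into $\abs{\cdot}_n$ bounds without an $L_n^\beta$ factor), while the $S_n$ contributions inside $R_n^6$ are small by Control \ref{Holder}. Performing the bookkeeping, summing over $m$, and tracking the powers of $L_n$ introduced at each step should yield the claimed exponent $\beta - 7(\delta - 2a)$.

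The main obstacle is exactly this bookkeeping: the argument weaves together roughly a dozen sub-estimates (localization cutoffs, H\"older extensions, contractions, and regularizations), each at slightly different length scales, and tracking the exponents carefully is what fixes the specific constant $6$ in $R_n^6$ and the specific constant $7$ in the final exponent. The structure of the argument, including these precise choices, appears in Proposition 3.9 of \cite{F1}; the hypotheses here being essentially the same, I would follow that proof almost verbatim, verifying only that the event $F_n$ still suffices for every cutoff application and that the comparison to $\overline{R}_n^{\ell_n^2-6} R_n^6$ (rather than the more elaborate composite treated there) simplifies the combinatorics.
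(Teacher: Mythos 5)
Your algebraic identity $R_{n+1}f-\overline{R}_n^{\ell_n^2-6}R_n^6f=\bigl(R_n^{\ell_n^2-6}-\overline{R}_n^{\ell_n^2-6}\bigr)R_n^6f$ is correct, but the estimation strategy you attach to it fails, and it fails precisely at the point that forces the constants $6$ and $7$. In the multinomial expansion of the first $\ell_n^2-6$ factors, consider a term in which the rightmost $S_n$ acts directly on $R_n^6f$; for instance the $m=1$ term $\overline{R}_n^{\ell_n^2-7}S_nR_n^6f$. Control \ref{Holder} charges $L_n^{-\delta}\abs{R_n^6f}_n$ for this, and the best available bound on $\abs{R_n^6f}_n$ is $CL_n^{\beta}\norm{f}_{C^{0,\beta}(\mathbb{R}^d)}$ (via Proposition \ref{a_weak}, or directly from $\abs{g}_n\leq L_n^{\beta}\norm{g}_{C^{0,\beta}(\mathbb{R}^d)}$). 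Your proposed bootstrap cannot remove the $L_n^{\beta}$ factor: inside $R_n^6=(\overline{R}_n+S_n)^6$ the terms where the chain of $S_n$'s reaches $f$ with no intervening $\overline{R}_n$ again cost $\abs{f}_n\leq L_n^{\beta}\norm{f}_{C^{0,\beta}(\mathbb{R}^d)}$. Since $\delta=\tfrac{5}{32}\beta<\beta$, the resulting bound $CL_n^{\beta-\delta}\norm{f}_{C^{0,\beta}(\mathbb{R}^d)}$ for that single term diverges as $n\to\infty$; more generally, a term with $m$ factors of $S_n$ terminating at $f$ costs on the order of $L_n^{\beta-m(\delta-2a)}$, which decays only once $m\geq 7$. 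So the terms with $1\leq m\leq 6$ cannot be discarded as errors, and your claimed sum over $m$ does not produce $L_n^{\beta-7(\delta-2a)}$.

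The paper's proof is built around exactly this obstruction. It expands all $\ell_n^2$ factors, estimates only the terms with $m\geq 7$ (these give $CL_n^{\beta-7(\delta-2a)}\norm{f}_{C^{0,\beta}(\mathbb{R}^d)}$) and those low-$m$ terms in which some $\overline{R}_n$ separates the $S_n$'s from $f$ (these are $O(L_n^{2a-\delta}\norm{f}_{L^\infty(\mathbb{R}^d)})$ with no $L_n^{\beta}$, thanks to the regularizing Proposition \ref{a_regular}), and then \emph{retains} the dangerous terms $(\tilde{\chi}_{n,x}\overline{R}_n)^{\ell_n^2-m}(\tilde{\chi}_{n,x}S_n)^mf$ for $0\leq m\leq 6$, showing via $S_n=R_n-\overline{R}_n$ that they telescope exactly to $(\tilde{\chi}_{n,x}\overline{R}_n)^{\ell_n^2-6}(\tilde{\chi}_{n,x}R_n)^6f$, as in (\ref{o_main_10}). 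That exact resummation, not a term-by-term bound, is the reason the comparison operator is $\overline{R}_n^{\ell_n^2-6}R_n^6$ rather than $\overline{R}_n^{\ell_n^2}$, and $7$ is the smallest integer with $7(\delta-2a)>\beta$. Your write-up identifies the right reference and the right cutoff machinery, but as stated the central step of the argument is missing.
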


\begin{proof}  Fix $n\geq 0$, $\omega\in F_n$ and $f\in C^{0,\beta}(\mathbb{R}^d)$.  In what follows, we suppress the dependence on $\omega\in\Omega$.  Notice that (\ref{D_1}) implies that, in the definition of $F_n$, we have \begin{equation}\label{o_main_0} 3\tilde{D}_{n+2}< L_{n+2}^2.\end{equation}  And, for what follows, we recall that $$\left(R_{n+1}\right)f(x)=\left(R_n\right)^{\ell_n^2}f(x).$$

Fix $x\in B_{\tilde{D}_{n+1}}$ and define the cutoff function $\tilde{\chi}_{n,x}:\mathbb{R}^d\rightarrow\mathbb{R}$, recalling (\ref{cutoff}), \begin{equation}\label{o_main_cut}\tilde{\chi}_{n,x}(y)=\chi_{2\tilde{D}_{n+2}}(y-x)\;\;\textrm{on}\;\;\mathbb{R}^d.\end{equation}  Since \begin{equation}\label{o_main_1} \norm{R_nf}_{L^\infty(\mathbb{R}^d)}\leq\norm{f}_{L^\infty(\mathbb{R}^d)},\end{equation} and since $x\in B_{\tilde{D}_{n+1}}$ and $\omega\in F_n$, Control \ref{localization}, Proposition \ref{local11} and (\ref{o_main_0}) imply that $$\abs{\left(R_n\right)^{\ell_n^2}f(x)-\left(R_n\right)^{\ell_n^2-1}\tilde{\chi}_{n,x}R_nf(x)}=\abs{\left(R_n\right)^{\ell_n^2-1}(1-\tilde{\chi}_{n,x})R_nf(x)}\leq e^{-\kappa_{n+2}}\norm{f}_{L^\infty(\mathbb{R}^d)}.$$  Proceeding inductively, we conclude that \begin{equation}\label{o_main_2} \abs{\left(R_n\right)^{\ell_n^2}f(x)-\left(\tilde{\chi}_{n,x}R_n\right)^{\ell_n^2}f(x)}\leq \ell_n^2e^{-\kappa_{n+2}}\norm{f}_{L^\infty(\mathbb{R}^d)}.\end{equation}

We now write $$\left(\tilde{\chi}_{n,x}R_n\right)^{\ell_n^2}f(x)=\left(\tilde{\chi}_{n,x}S_n+\tilde{\chi}_{n,x}\overline{R}_n\right)^{\ell_n^2}f(x),$$ and, for nonnegative integers $k_i\geq 0$, \begin{multline*}\left(\tilde{\chi}_{n,x}S_n+\tilde{\chi}_{n,x}\overline{R}_n\right)^{\ell_n^2}f(x)= \\ \sum_{m=0}^{\ell_n^2}\sum_{k_0+\ldots+k_m+m=\ell_n^2}\left(\tilde{\chi}_{n,x}\overline{R}_n\right)^{k_0}\tilde{\chi}_{n,x}S_n\left(\tilde{\chi}_{n,x}\overline{R}_n\right)^{k_1} \ldots \tilde{\chi}_{n,x}S_n\left(\tilde{\chi}_{n,x}\overline{R}_n\right)^{k_m}f(x).\end{multline*}

Since, for each $n\geq 0$, $$\abs{f}_n\leq L_n^\beta\norm{f}_{C^{0,\beta}(\mathbb{R}^d)},$$ and since $x\in B_{\tilde{D}_{n+1}}$ and $\omega\in F_n$, Control \ref{Holder}, Proposition \ref{prelim_product}, Proposition \ref{prelim_extension}, Proposition \ref{d_contract}, Proposition \ref{a_regular} and (\ref{o_main_0}) imply that \begin{multline*}\left|\sum_{m=7}^{\ell_n^2}\sum_{k_0+\ldots+k_m+m=\ell_n^2}\left(\tilde{\chi}_{n,x}\overline{R}_n\right)^{k_0}\tilde{\chi}_{n,x}S_n\left(\tilde{\chi}_{n,x}\overline{R}_n\right)^{k_1} \ldots \tilde{\chi}_{n,x}S_n\left(\tilde{\chi}_{n,x}\overline{R}_n\right)^{k_m}f(x)\right|\leq \\ \sum_{m=7}^{\ell_n^2} {\ell_n^2 \choose m} 3^mL_n^{\beta-m\delta}\norm{f}_{C^{0,\beta}(\mathbb{R}^d)}.  \end{multline*}  Therefore, for $C>0$ independent of $n$, the lefthand side of the above string of inequalities is bounded by \begin{equation}\label{o_main_3} \sum_{m=7}^{\ell_n^2} \frac{3^m}{m!}L_n^{\beta-m(\delta-2a)}\norm{f}_{C^{0,\beta}(\mathbb{R}^d)}\leq CL_n^{\beta-7(\delta-2a)}\norm{f}_{C^{0,\beta}(\mathbb{R}^d)},\end{equation} where we remark that $\beta-7(\delta-2a)<0$ in view of (\ref{Holderexponent}) and (\ref{delta}).

It remains to consider \begin{equation}\label{o_main_4}\sum_{m=0}^{6}\sum_{k_0+\ldots+k_m+m=\ell_n^2}\left(\tilde{\chi}_{n,x}\overline{R}_n\right)^{k_0}\tilde{\chi}_{n,x}S_n\left(\tilde{\chi}_{n,x}\overline{R}_n\right)^{k_1} \ldots \tilde{\chi}_{n,x}S_n\left(\tilde{\chi}_{n,x}\overline{R}_n\right)^{k_m}f(x).\end{equation}  We will prove that, up to an error which vanishes as $n$ approaches infinity, the above sum reduces to $$\left(\overline{R}_n\right)^{\ell_n^2-6}\left(R_n\right)^6f(x).$$  To do so, we consider each summand in $m$ individually.

For the case $m=0$, the single summand is \begin{equation}\label{o_main_5} \left(\tilde{\chi}_{n,x}\overline{R}_n\right)^{\ell_n^2}f(x).\end{equation}

For the case $m=1$, observe that, since $x\in B_{\tilde{D}_{n+1}}$ and $\omega\in F_n$, Control \ref{Holder}, Proposition \ref{prelim_product}, Proposition \ref{prelim_extension}, Proposition \ref{d_contract}, Proposition \ref{a_regular} and (\ref{o_main_0}) imply that, for $C>0$ independent of $n$,  \begin{multline}\label{o_main_7}\left|\sum_{k_0+k_1+1=\ell_n^2}\left(\tilde{\chi}_{n,x}\overline{R}_n\right)^{k_0}\tilde{\chi}_{n,x}S_n\left(\tilde{\chi}_{n,x}\overline{R}_n\right)^{k_1}f(x)-\left(\tilde{\chi}_{n,x}\overline{R}_n\right)^{\ell_n^2-1}\tilde{\chi}_{n,x}S_nf(x)\right| \\ \leq C{\ell_n^2 \choose 1}3L_n^{-\delta}\norm{f}_{L^\infty(\mathbb{R}^d)}\leq CL_n^{2a-\delta}\norm{f}_{L^\infty(\mathbb{R}^d)},\end{multline} where we observe that $2a-\delta<0$ in view of (\ref{Holderexponent}) and (\ref{delta}).  Furthermore, \begin{equation}\label{o_main_8} \left(\tilde{\chi}_{n,x}\overline{R}_n\right)^{\ell_n^2-1}\tilde{\chi}_{n,x}S_nf(x)=\left(\tilde{\chi}_{n,x}\overline{R}_n\right)^{\ell_n^2-1}\tilde{\chi}_{n,x}R_nf(x)-\left(\tilde{\chi}_{n,x}\overline{R}_n\right)^{\ell_n^2}f(x).\end{equation}  Notice the cancellation between (\ref{o_main_5}) and (\ref{o_main_8}).

In what follows, we use that fact that, for every $f\in L^\infty(\mathbb{R}^d)$, $$\norm{S_nf}_{L^\infty(\mathbb{R}^d)}\leq 2\norm{f}_{L^\infty(\mathbb{R}^d)}.$$  Fix $2\leq m\leq 6$.  In this case, as in the case $m=0$ and $m=1$, Control \ref{Holder}, Proposition \ref{prelim_product}, Proposition \ref{prelim_extension}, Proposition \ref{d_contract} and Proposition \ref{a_regular} allow us to reduce the sum to the single term $k_i=0$ for all $1\leq i\leq m$.  Observe that, since $x\in B_{\tilde{D}_{n+1}}$ and $\omega\in F_n$, for $C>0$ independent of $n$, $$\left|\sum_{k_m\neq 0}\left(\tilde{\chi}_{n,x}\overline{R}_n\right)^{k_0}\tilde{\chi}_{n,x}S_n\ldots\left(\tilde{\chi}_{n,x}\overline{R}_n\right)^{k_m}f(x)\right| \leq C{\ell_n^2 \choose m}3^mL_n^{-m\delta}\norm{f}_{L^\infty(\mathbb{R}^d)}.$$  And, generally, for $1\leq i\leq m$, since $x\in B_{\tilde{D}_{n+1}}$ and $\omega\in F_n$, for $C>0$ independent of $n$,\begin{multline*}\left|\sum_{k_i\neq 0,\;k_j=0\;\textrm{if}\;j>i}\left(\tilde{\chi}_{n,x}\overline{R}_n\right)^{k_0}\tilde{\chi}_{n,x}S_n\ldots\left(\tilde{\chi}_{n,x}\overline{R}_n\right)^{k_i}\left(\tilde{\chi}_{n,x}S_n\right)^{m-i}f(x)\right| \\ \leq C{\ell_n^2-m+i \choose i}2^{m-i}3^iL_n^{-i\delta}\norm{f}_{L^\infty(\mathbb{R}^d)}.\end{multline*}  Therefore, for $C>0$ independent of $2\leq m\leq 6$ and $n$, \begin{multline}\label{o_main_9}\left|\sum_{k_0+\ldots+k_m+m=\ell_n^2}\left(\tilde{\chi}_{n,x}\overline{R}_n\right)^{k_0}\ldots\left(\tilde{\chi}_{n,x}\overline{R}_n\right)^{k_m}f(x)-\left(\tilde{\chi}_{n,x}\overline{R}_n\right)^{\ell_n^2-m}\left(\tilde{\chi}_{n,x}S_n\right)^mf(x)\right| \\ \leq C\sum_{i=1}^m{\ell_n^2-m+i \choose i}2^{m-i}3^iL_n^{-i\delta}\norm{f}_{L^\infty(\mathbb{R}^d)}\leq CL_n^{2a-\delta}\norm{f}_{L^\infty(\mathbb{R}^d)},\end{multline} where we observe that $2a-\delta<0$ in view of (\ref{Holderexponent}) and (\ref{delta}).

Furthermore, again using Control \ref{Holder}, Proposition \ref{prelim_product}, Proposition \ref{prelim_extension}, Proposition \ref{d_contract}, Proposition \ref{a_regular} and (\ref{o_regular_1}), since $x\in B_{\tilde{D}_{n+1}}$ and $\omega\in F_n$, for each $2\leq m\leq 6$, for $C>0$ independent of $n$, \begin{multline*}\left|\left(\tilde{\chi}_{n,x}\overline{R}_n\right)^{\ell_n^2-m}\left(\tilde{\chi}_{n,x}S_n\right)^mf(x)-\left(\tilde{\chi}_{n,x}\overline{R}_n\right)^{\ell_n^2-m}\left(\tilde{\chi}_{n,x}S_n\right)^{m-1}\tilde{\chi}_{n,x}R_nf(x)\right| \\ =\left|\left(\tilde{\chi}_{n,x}\overline{R}_n\right)^{\ell_n^2-m}\left(\tilde{\chi}_{n,x}S_n\right)^{m-1}\tilde{\chi}_{n,x}\overline{R}_nf(x)\right|\leq C 3^{m-1}L_n^{(m-1)\delta}\norm{f}_{L^\infty(\mathbb{R}^d)}.\end{multline*}  Proceeding inductively, for each $2\leq m\leq 6$, for $C>0$ independent of $m$ and $n$, $$\left|\left(\tilde{\chi}_{n,x}\overline{R}_n\right)^{\ell_n^2-m}\left(\tilde{\chi}_{n,x}S_n\right)^mf(x)-\left(\tilde{\chi}_{n,x}\overline{R}_n\right)^{\ell_n^2-m}\tilde{\chi}_{n,x}S_n\left(\tilde{\chi}_{n,x}R_n\right)^{m-1}f(x)\right|\leq CL_n^{-\delta}\norm{f}_{L^\infty(\mathbb{R}^d)},$$ where we observe that \begin{multline}\label{o_main_10}\left(\tilde{\chi}_{n,x}\overline{R}_n\right)^{\ell_n^2}f(x)+\sum_{m=1}^6\left(\tilde{\chi}_{n,x}\overline{R}_n\right)^{\ell_n^2-m}\tilde{\chi}_{n,x}S_n\left(\tilde{\chi}_{n,x}R_n\right)^{m-1}f(x) \\ =\left(\tilde{\chi}_{n,x}\overline{R}_n\right)^{\ell_n^2-6}\left(\tilde{\chi}_{n,x}R_n\right)^{6}f(x).\end{multline}  And, since $x\in B_{\tilde{D}_{n+1}}$ and $\omega\in F_n$, Control \ref{localization}, Proposition \ref{local11}, Proposition \ref{d_hlocalize} and (\ref{o_main_0}) imply that there exists $C>0$ and $c>0$ independent of $n$ and such that \begin{equation}\label{o_main_11}\left|\left(\tilde{\chi}_{n,x}\overline{R}_n\right)^{\ell_n^2-6}\left(\tilde{\chi}_{n,x}R_n\right)^6f(x)-\left(\overline{R}_n\right)^{\ell_n^2-6}\left(R_n\right)^6f(x)\right|\leq C\ell_n^2e^{-c\kappa_{n+2}}\norm{f}_{L^\infty(\mathbb{R}^d)}.\end{equation}

Therefore, in view of (\ref{o_main_0}), (\ref{o_main_2}), (\ref{o_main_3}), (\ref{o_main_5}), (\ref{o_main_7}), (\ref{o_main_9}), (\ref{o_main_10}) and (\ref{o_main_11}), there exits $C>0$ and $c>0$ independent of $n$ such that \begin{multline}\label{o_main_12} \abs{R_{n+1}f(x)-\left(\overline{R}_n\right)^{\ell_n^2-6}\left(R_n\right)^6f(x)}=\abs{\left(R_n\right)^{\ell_n^2}f(x)-\left(\overline{R}_n\right)^{\ell_n^2-6}\left(R_n\right)^6f(x)} \\ \leq C\ell_n^2e^{-c\kappa_{n+2}}\norm{f}_{L^\infty(\mathbb{R}^d)}+CL_n^{\beta-7(\delta-2a)}\norm{f}_{C^{0,\beta}(\mathbb{R}^d)}+CL_n^{2a-\delta}\norm{f}_{L^\infty(\mathbb{R}^d)}.\end{multline}  In view of (\ref{Holderexponent}), (\ref{L}), (\ref{kappa}) and (\ref{delta}) there exits $C>0$ independent of $n$ such that, for all $n\geq 0$, $$\ell_n^2e^{-c\kappa_{n+2}}\leq CL_n^{\beta-7(\delta-2a)}\;\;\textrm{and}\;\;L_n^{2a-\delta}\leq CL_n^{\beta-7(\delta-2a)}.$$  And, since $\norm{f}_{L^\infty(\mathbb{R}^d)}\leq \norm{f}_{C^{0,\beta}(\mathbb{R}^d)}$, we have, using (\ref{o_main_12}), for $C>0$ independent of $n$, \begin{equation}\label{o_main_14}\abs{R_{n+1}f(x,\omega)-\left(\overline{R}_n\right)^{\ell_n^2-6}\left(R_n\right)^6f(x,\omega)}\leq CL_n^{\beta-7(\delta-2a)}\norm{f}_{C^{0,\beta}(\mathbb{R}^d)}.\end{equation}  Since $n\geq 0$, $\omega\in F_n$, $x\in B_{\tilde{D}_{n+1}}$ and $f\in C^{0,\beta}(\mathbb{R}^d)$ were arbitrary, this completes the proof.  \end{proof}

Because the bound appearing on Proposition \ref{a_main} depends on the unscaled $\beta$-H\"older norm of the initial data, we now observe that any bounded function $w:\mathbb{R}^d\times(-\infty,\infty)\rightarrow\mathbb{R}$ satisfying (\ref{a_sol}) is $\beta$-H\"older continuous in space, uniformly in time.

\begin{prop}\label{a_weak}  Assume (\ref{steady}).  For each $\omega\in\Omega$, $t\geq 1$ and $g\in L^\infty(\mathbb{R}^d)$, for $C>0$ independent of $\omega\in\Omega$ and $t\geq 1$, $$\norm{R_tg(x,\omega)}_{C^{0,\beta}(\mathbb{R}^d)}\leq C\norm{g}_{L^\infty(\mathbb{R}^d)}.$$\end{prop}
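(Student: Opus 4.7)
The plan is to combine the $L^\infty$-contraction coming from the representation formula with the Gaussian gradient bound on the Green's function recorded in (\ref{e_green}) and the semigroup property of $R_t$. Since the coefficients in (\ref{d_eq}) are time-independent, the strong Markov property furnishes, for every $s,t\geq 0$, $\omega\in\Omega$ and $h\in L^\infty(\mathbb{R}^d)$, the identity $R_{t+s}h(\cdot,\omega)=R_s(R_t h(\cdot,\omega))(\cdot,\omega)$, and the representation formula (\ref{e_green_1}) immediately yields $\norm{R_t g(\cdot,\omega)}_{L^\infty(\mathbb{R}^d)}\leq \norm{g}_{L^\infty(\mathbb{R}^d)}$ for every $t\geq 0$ and $\omega\in\Omega$.

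For $t\geq 1$, I would split off a unit time slab by writing $R_t g(\cdot,\omega)=R_{1/2}(R_{t-1/2}g(\cdot,\omega))(\cdot,\omega)$. Differentiating under the integral in the representation formula and using the gradient bound from (\ref{e_green}) with $T=1/2$, for every $h\in L^\infty(\mathbb{R}^d)$, $\omega\in\Omega$ and $x\in\mathbb{R}^d$ one has
$$\abs{DR_{1/2}h(x,\omega)}\leq \norm{h}_{L^\infty(\mathbb{R}^d)}\int_{\mathbb{R}^d}\abs{D_x p_{1/2,\omega}(x,y)}\,dy\leq C\norm{h}_{L^\infty(\mathbb{R}^d)},$$
with $C$ depending only on the ellipticity, boundedness and Lipschitz constants of the coefficients and, crucially, independent of $\omega$ and $t$. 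Applying this with $h=R_{t-1/2}g(\cdot,\omega)$ and invoking the $L^\infty$-contraction from the previous paragraph produces the uniform Lipschitz bound $\norm{DR_t g(\cdot,\omega)}_{L^\infty(\mathbb{R}^d)}\leq C\norm{g}_{L^\infty(\mathbb{R}^d)}$ for every $t\geq 1$ and $\omega\in\Omega$.

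To upgrade this into a $\beta$-H\"older bound I would argue separately on short and long scales: when $\abs{x-y}\leq 1$ the Lipschitz estimate gives $\abs{R_t g(x,\omega)-R_t g(y,\omega)}\leq C\norm{g}_{L^\infty(\mathbb{R}^d)}\abs{x-y}\leq C\norm{g}_{L^\infty(\mathbb{R}^d)}\abs{x-y}^\beta$, while when $\abs{x-y}>1$ the $L^\infty$-contraction gives $\abs{R_t g(x,\omega)-R_t g(y,\omega)}\leq 2\norm{g}_{L^\infty(\mathbb{R}^d)}\leq 2\norm{g}_{L^\infty(\mathbb{R}^d)}\abs{x-y}^\beta$. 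Combined with the sup-norm bound, this yields $\norm{R_t g(\cdot,\omega)}_{C^{0,\beta}(\mathbb{R}^d)}\leq C\norm{g}_{L^\infty(\mathbb{R}^d)}$ as claimed.

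There is no genuine obstacle in this argument; the only subtlety is to apply (\ref{e_green}) at a single, fixed time scale (here $t=1/2$) so that the constants $C(T)$ and $c(T)$ there do not inflate with $t$. The time-homogeneity of the coefficients and the Markov property then propagate this fixed-time smoothing to arbitrarily large $t$ without loss, and all bounds remain independent of $\omega$ because the estimates in (\ref{e_green}) do.
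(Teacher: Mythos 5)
Your proof is correct and follows essentially the same route as the paper: split off a fixed unit-scale smoothing step via the semigroup property, use the Gaussian gradient bound of (\ref{e_green}) at that single fixed time together with the $L^\infty$-contraction, and then interpolate between the resulting Lipschitz bound at scales $\abs{x-y}\leq 1$ and plain boundedness at larger scales. If anything, your ordering $R_t=R_{1/2}\circ R_{t-1/2}$, with the smoothing operator applied last to the bounded function $R_{t-1/2}g$, is the cleaner way to write the composition — the paper's display (\ref{o_weak_6}) writes the factors in the opposite order, which as literally written would require a translation-invariant kernel, though the intended argument is exactly yours.
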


\begin{proof}  Fix $\omega\in\Omega$ and $g\in L^\infty(\mathbb{R}^d)$.  Recall that, for each $t\geq 0$ and $x\in\mathbb{R}^d$, see \cite{Fr}, \begin{equation}\label{o_weak_1}R_tg(x,\omega)=P_{x,\omega}\left(g(X_t)\right)=\int_{\mathbb{R}^d}p_{t,\omega}(x,y)g(y)\;dy,\end{equation} for $p_{t,\omega}(x,y):\mathbb{R}^d\times\mathbb{R}^d\times(0,\infty)\rightarrow\mathbb{R}$ satisfying, for each $0<t\leq 1$, for $C>0$ and $c>0$ independent of $\omega$,\begin{equation}\label{o_weak_2} \abs{p_{t,\omega}(x,y)}\leq Ct^{-d/2}e^{-c\abs{x-y}^2/t}\;\;\textrm{and}\;\;\abs{D_xp_{t,\omega}(x,y)}\leq Ct^{-(d+1)/2}e^{-c\abs{x-y}^2/t}.\end{equation}

First, we observe that for each $x\in\mathbb{R}^d$ and $t\geq 0$, using (\ref{o_weak_1}), \begin{equation}\label{o_weak_3} \abs{R_tg(x,\omega)}\leq \norm{g}_{L^\infty(\mathbb{R}^d)}.\end{equation}   It remains to bound the H\"older semi-norm.

Whenever $x,y\in\mathbb{R}^d$ satisfy $\abs{x-y}\geq 1$, \begin{equation}\label{o_weak_4} \abs{R_1g(x,\omega)-R_1g(y,\omega)}\leq 2\norm{g}_{L^\infty(\mathbb{R}^d)}\leq 2\abs{x-y}^\beta\norm{g}_{L^\infty(\mathbb{R}^d)}.\end{equation}  And, whenever $x,y\in\mathbb{R}^d$ satisfy $\abs{x-y}<1$, in view of (\ref{o_weak_1}) and (\ref{o_weak_2}), for $C>0$ independent of $\omega\in\Omega$, \begin{equation}\label{o_weak_5}\abs{R_1g(x,\omega)-R_1g(y,\omega)}\leq C\abs{x-y}\norm{g}_{L^\infty(\mathbb{R}^d)}\leq C\abs{x-y}^\beta\norm{g}_{L^\infty(\mathbb{R}^d)}.\end{equation}  Therefore, for each $x,y\in\mathbb{R}^d$ and $t\geq 1$, using (\ref{o_weak_3}), (\ref{o_weak_4}) and (\ref{o_weak_5}), \begin{multline}\label{o_weak_6} \abs{R_tg(x,\omega)-R_tg(y,\omega)}=\abs{R_{t-1}(R_1g(x,\omega)-R_1g(y,\omega))} \\ \leq \sup_{x,y\in\mathbb{R}^d}\abs{R_1g(x,\omega)-R_1g(y,\omega)}\leq C\abs{x-y}^\beta\norm{g}_{L^\infty(\mathbb{R}^d)}.\end{multline} The claim follows from (\ref{o_weak_3}), (\ref{o_weak_4}) and (\ref{o_weak_6}), since $\omega\in\Omega$ and $g\in L^\infty(\mathbb{R}^d)$ were arbitrary.  \end{proof}

We now use Proposition \ref{a_main} to prove Theorem \ref{i_ancient}. To do so, we recall the subsets $\left\{F_n\right\}_{n=0}^\infty$ defined in (\ref{o_subset}), and observe in view of (\ref{Holderexponent}), (\ref{delta}) and Proposition \ref{a_probability}, for $C>0$ independent of $n$, $$\sum_{n=0}^\infty\mathbb{P}\left(\Omega\setminus F_n\right)\leq \sum_{n=0}^\infty CL_n^{(2(1+a)^2-1)d-M_0}<\infty.$$  Therefore, using the Borel-Cantelli lemma, we define the subset $\Omega_2\subset\Omega$ of full probability \begin{equation}\label{lp_subset} \Omega_2=\left\{\;\omega\in\Omega\;|\;\textrm{There exists}\;\overline{n}(\omega)\geq 0\;\textrm{such that}\;\omega\in F_n\;\textrm{for all}\;n\geq\overline{n}.\;\right\},\end{equation} and prove that, on this subset, the only bounded functions satisfying (\ref{a_sol}) are the constant functions.

\begin{thm}\label{lp_main}  Assume (\ref{steady}) and (\ref{constants}).  For each $\omega\in\Omega_2$, the constant functions are the only bounded $w:\mathbb{R}^d\times(-\infty,\infty)\rightarrow\mathbb{R}$ satisfying (\ref{a_sol}).\end{thm}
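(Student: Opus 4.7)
The plan is to exploit the regularization property identified in Proposition \ref{a_main}: the operator $R_{n+1}$ agrees, up to a vanishing error, with $\left(\overline{R}_n\right)^{\ell_n^2-6}\left(R_n\right)^6$, and the deterministic heat operator $\left(\overline{R}_n\right)^{\ell_n^2-6}$ acts over a time of order $L_{n+1}^2$. By the standard gradient estimate for such a wide Gaussian convolution, its output applied to a uniformly bounded function is nearly constant on any fixed compact set. Combined with the fact that a bounded ancient solution can always be represented as $R_{n+1}$ applied to its value at time $t-L_{n+1}^2$, this will force $w(\cdot,t)$ to be constant in space, and then constancy in time follows at once from (\ref{a_sol}) since $R_s$ preserves constants.

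First I would fix $\omega\in\Omega_2$ and a bounded $w$ satisfying (\ref{a_sol}). Proposition \ref{a_main} requires the input data to have a controlled $\beta$-H\"older norm, which is not automatic from the boundedness of $w$; writing $w(\cdot,s)=R_1 w(\cdot,s-1)$ and invoking Proposition \ref{a_weak} yields
$$\sup_{s\in\mathbb{R}} \norm{w(\cdot,s)}_{\C^{0,\beta}(\mathbb{R}^d)} \leq C\norm{w}_{L^\infty(\mathbb{R}^d\times\mathbb{R})},$$
so $w(\cdot,s)$ is an admissible test function for Proposition \ref{a_main} with a bound depending only on $\norm{w}_\infty$.

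Next, for arbitrary $x_1,x_2\in\mathbb{R}^d$ and $t\in\mathbb{R}$, I would choose $n$ sufficiently large that $\omega\in F_n$ (using the definition (\ref{lp_subset}) of $\Omega_2$) and $x_1,x_2\in B_{\tilde{D}_{n+1}}$. Combining the ancient identity $w(x_i,t)=R_{n+1} w(\cdot,t-L_{n+1}^2)(x_i,\omega)$ with Proposition \ref{a_main} applied to $f=w(\cdot,t-L_{n+1}^2)$ gives
$$\abs{w(x_i,t) - \left(\overline{R}_n\right)^{\ell_n^2-6}\left(R_n\right)^6 w(\cdot,t-L_{n+1}^2)(x_i,\omega)} \leq CL_n^{\beta-7(\delta-2a)}\norm{w}_\infty,$$
which vanishes as $n\to\infty$. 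It then remains to compare $\left(\overline{R}_n\right)^{\ell_n^2-6}$ applied to the uniformly bounded function $g:=\left(R_n\right)^6 w(\cdot,t-L_{n+1}^2)$ at the two points $x_1$ and $x_2$. A direct gradient estimate for the Gaussian kernel at time $T=(\ell_n^2-6)L_n^2\sim L_{n+1}^2$, together with the lower bound on $\alpha_n$ from Theorem \ref{effectivediffusivity}, produces
$$\abs{\left(\overline{R}_n\right)^{\ell_n^2-6} g(x_1) - \left(\overline{R}_n\right)^{\ell_n^2-6} g(x_2)} \leq \frac{C\norm{w}_\infty\abs{x_1-x_2}}{L_{n+1}},$$
which also tends to zero. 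Hence $w(x_1,t)=w(x_2,t)$.

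Since $x_1,x_2$ and $t$ were arbitrary, $w(\cdot,t)\equiv c(t)$ for some function $c:\mathbb{R}\to\mathbb{R}$. Plugging back into (\ref{a_sol}) and using that $R_s$ preserves constants gives $c(t+s)=c(t)$ for all $t\in\mathbb{R}$ and $s\geq 0$, so $w$ is globally constant. The main technical point to monitor is the reconciliation between the $\C^{0,\beta}$-input hypothesis of Proposition \ref{a_main} and the mere boundedness of $w$, which is resolved uniformly in time by Proposition \ref{a_weak}; beyond this, the argument is a clean assembly of the pieces already developed in the preceding sections.
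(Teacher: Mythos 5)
Your proposal is correct and follows essentially the same route as the paper: uniform spatial H\"older regularity via Proposition \ref{a_weak}, the comparison $R_{n+1}\approx(\overline{R}_n)^{\ell_n^2-6}(R_n)^6$ from Proposition \ref{a_main}, and the gradient bound $CL_{n+1}^{-1}$ for the long-time Gaussian kernel (using the lower bound on $\alpha_n$), followed by constancy in time from (\ref{a_sol}). The only cosmetic difference is that you compare $w(x_1,t)$ and $w(x_2,t)$ directly via the gradient estimate, whereas the paper extracts a locally uniform subsequential limit equal to a constant $\overline{w}_t$; these are interchangeable.
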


\begin{proof}  Fix $\omega\in\Omega_2$ and a bounded function $w:\mathbb{R}^d\times(-\infty,\infty)\rightarrow\mathbb{R}$ satisfying (\ref{a_sol}).  We write, for each $t\in(-\infty,\infty)$, \begin{equation}\label{lp_main_11}w_t(x)=w(x,t),\end{equation} and observe that (\ref{a_sol}) implies, for each $t\in (-\infty,\infty)$ and $x\in\mathbb{R}^d$, \begin{equation}\label{lp_main_00} R_1w_{t-1}(x,\omega)=w_t(x).\end{equation}  Therefore, in view of Proposition \ref{a_weak}, for each $t\in(-\infty,\infty)$, for $C>0$ independent of $t$, \begin{equation}\label{lp_main_0}\norm{w_t}_{C^{0,\beta}(\mathbb{R}^d)}\leq C\norm{w}_{L^\infty(\mathbb{R}^d\times(-\infty,\infty))}.\end{equation}

Fix $t\in (-\infty,\infty)$.  Since $\omega\in\Omega_2$, fix $\overline{n}\geq 0$ such that $\omega\in A_n$ for every $n\geq \overline{n}$.  Because, for each $n\geq 0$ and $k\geq 0$, $$\left(R_n\right)^kw_{t-kL_n^2}(x,\omega)=w_t(x)\;\;\textrm{on}\;\;\mathbb{R}^d,$$ for every $n\geq \overline{n}$, Proposition \ref{a_main}, (\ref{lp_main_00}) and (\ref{lp_main_0}) imply that, for $C>0$ independent of $n\geq\overline{n}$,   \begin{multline}\label{lp_main_2}  \sup_{x\in B_{\tilde{D}_{n+1}}}\abs{R_{n+1}w_{t-L_{n+1}^2}(x)-\left(\overline{R}_n\right)^{\ell_n^2-6}\left(R_n\right)^6w_{t-L_{n+1}^2}(x,\omega)}\leq CL_n^{\beta-7(\delta-2a)}\norm{w_{t-L_{n+1}^2}}_{C^{0,\beta}(\mathbb{R}^d)} \\ \leq CL_n^{\beta-7(\delta-2a)}\norm{w}_{L^\infty(\mathbb{R}^d\times(-\infty,\infty))}.\end{multline}  And, since $w$ satisfies (\ref{a_sol}), for $C>0$ independent of $n\geq\overline{n}$, we have \begin{equation}\label{lp_main_4} \sup_{x\in B_{\tilde{D}_{n+1}}}\abs{w_t(x)-\left(\overline{R}_n\right)^{\ell_n^2-6}w_{t+6L_n^2-L_{n+1}^2}(x)}\leq CL_n^{\beta-7(\delta-2a)}\norm{w}_{L^\infty(\mathbb{R}^d\times(-\infty,\infty))}.\end{equation}

Using the same computation that appears in Proposition \ref{a_regular}, using Theorem \ref{effectivediffusivity}, for each $n\geq 0$, for $C>0$ independent of $n$, \begin{equation}\label{lp_main_5} \norm{D_x(\overline{R}_n)^{\ell_n^2-6}w_{t+6L_n^2-L_{n+1}^2}}_{L^\infty(\mathbb{R}^d)}\leq CL_{n+1}^{-1}\norm{w}_{L^\infty(\mathbb{R}^d\times(-\infty,\infty))}.\end{equation}  Furthermore, since for each $n\geq 0$, \begin{equation}\label{lp_main_6} \norm{(\overline{R}_n)^{\ell_n^2-6}w_{t+6L_n^2-L_{n+1}^2}}_{L^\infty(\mathbb{R}^d)}\leq \norm{w}_{L^\infty(\mathbb{R}^d\times(-\infty,\infty))}, \end{equation} in combination (\ref{lp_main_5}) and (\ref{lp_main_6}) imply that, along subsequence $\left\{n_k\rightarrow\infty\right\}_{k=1}^\infty$, for a constant $\overline{w}_t\in\mathbb{R}$, as $k\rightarrow\infty$, \begin{equation}\label{lp_main_7} (\overline{R}_{n_k})^{\ell_{n_k}^2-6}w_{t+6L_{n_k}^2-L_{n_k+1}^2}\rightarrow\overline{w}_t\;\;\textrm{locally uniformly on}\;\;\mathbb{R}^d.\end{equation}  Therefore, in view of (\ref{lp_main_4}) and (\ref{lp_main_7}), $$w_t=\overline{w}_t\;\;\textrm{on}\;\;\mathbb{R}^d.$$

Since $t\in(-\infty,\infty)$ was arbitrary, we conclude that, for each $t\in(-\infty,\infty)$, there exists $\overline{w}_t\in\mathbb{R}$ satisfying $$w_t=\overline{w}_t\;\;\textrm{on}\;\;\mathbb{R}^d.$$  And, whenever $t_1<t_2$, (\ref{a_sol}) implies that, for each $x\in\mathbb{R}^d$, $$\overline{w}_{t_1}=R_{t_2-t_1}w_{t_1}(x,\omega)=w_{t_2}(x)=\overline{w}_{t_2}.$$ We therefore conclude that $w:\mathbb{R}^d\times(-\infty,\infty)\rightarrow\mathbb{R}$ is constant, which, since $\omega\in\Omega_2$ and $w$ satisfying (\ref{a_sol}) were arbitrary, completes the argument.  \end{proof}

\bibliography{Liouville}

\begin{thebibliography}{1}

\bibitem{BDKY}
I.~Benjamini, H.~Duminil-Copin, G.~Kozma, and A.~Yadin.
\newblock Disorder, entropy and harmonic functions.
\newblock {\em Arxiv:1111.4853}, 2011.

\bibitem{BK}
J.~Bricmont and A.~Kupiainen.
\newblock Random walks in asymmetric random environments.
\newblock {\em Comm. Math. Phys.}, 142(2):345--420, 1991.

\bibitem{F1}
B.~Fehrman.
\newblock On the existence of an invariant measure for isotropic diffusions in
  random environment.
\newblock {\em Arxiv:1404.5274}, 2014.

\bibitem{Fr}
Avner Friedman.
\newblock {\em Partial differential equations of parabolic type}.
\newblock Prentice-Hall, Inc., Englewood Cliffs, N.J., 1964.

\bibitem{RY}
Daniel Revuz and Marc Yor.
\newblock {\em Continuous martingales and {B}rownian motion}, volume 293 of
  {\em Grundlehren der Mathematischen Wissenschaften [Fundamental Principles of
  Mathematical Sciences]}.
\newblock Springer-Verlag, Berlin, third edition, 1999.

\bibitem{SV}
Daniel~W. Stroock and S.~R.~Srinivasa Varadhan.
\newblock {\em Multidimensional diffusion processes}.
\newblock Classics in Mathematics. Springer-Verlag, Berlin, 2006.
\newblock Reprint of the 1997 edition.

\bibitem{SZ}
Alain-Sol Sznitman and Ofer Zeitouni.
\newblock An invariance principle for isotropic diffusions in random
  environment.
\newblock {\em Invent. Math.}, 164(3):455--567, 2006.

\end{thebibliography}
\bibliographystyle{plain}

\end{document}